\newcommand{\A}{\mathcal{A}}
\newcommand{\Ah}{\mathcal{A}^h}
\newcommand{\al}{\alpha}
\newcommand{\V}{\mathcal{V}}
\newcommand{\R}{\mathbb{R}}
\newcommand{\N}{\mathbb{N}}
\newcommand{\mH}{\mathcal{H}}
\newcommand{\F}{\mathcal{F}}
\newcommand{\bB}{\mathbf{B}}
\newcommand{\C}{\mathcal{C}}
\newcommand{\bC}{\mathbf{C}}
\newcommand{\bd}{\mathbf{d}}
\newcommand{\la}{\lambda}
\newcommand{\La}{\Lambda}
\newcommand{\si}{\sigma}
\newcommand{\Si}{\Sigma}
\newcommand{\de}{\delta}
\newcommand{\ep}{\epsilon}
\newcommand{\Om}{\Omega}
\newcommand{\om}{\omega}
\newcommand{\ga}{\gamma}
\newcommand{\lap}{\triangle}
\newcommand{\mZ}{\mathbb{Z}}
\newcommand{\Z}{\mathcal{Z}}
\newcommand{\mS}{\mathcal{S}}
\newcommand{\f}{\mathbf{f}}
\newcommand{\M}{\mathbf{M}}
\newcommand{\bL}{\mathbf{L}}
\newcommand{\bI}{\mathbf{I}}
\newcommand{\mf}{\mathbf{f}}
\newcommand{\mF}{\mathbf{F}}
\newcommand{\X}{\mathfrak{X}}
\newcommand{\sA}{\mathscr{A}}
\newcommand{\bmu}{\boldsymbol\mu}
\newcommand{\n}{\mathbf{n}}
\newcommand{\bK}{\mathbf{K}}
\newcommand{\rom}[1]{\expandafter\romannumeral #1}
\newcommand{\Rom}[1]{\uppercase\expandafter{\romannumeral #1}}
\newcommand{\dist}{\operatorname{dist}}
\newcommand{\vol}{\operatorname{Vol}}
\newcommand{\Area}{\operatorname{Area}}
\newcommand{\area}{\operatorname{area}}
\newcommand{\Diff}{\operatorname{Diff}}
\newcommand{\Id}{\operatorname{Id}}
\newcommand{\Gr}{\operatorname{Graph}}
\newcommand{\PMC}{\mathcal{P}}
\newcommand{\ind}{\operatorname{index}}
\newcommand{\dmn}{\operatorname{dmn}}
\begin{document}

\newtheorem{theorem}{Theorem}[section]
\newtheorem*{conjecture}{Conjecture}
\newtheorem*{theoremA}{Theorem A}
\newtheorem*{theoremB}{Theorem B}
\newtheorem*{theoremC}{Theorem C}
\newtheorem{proposition}[theorem]{Proposition}
\newtheorem{corollary}[theorem]{Corollary}

\newtheorem{claim}{Claim}

\theoremstyle{remark}
\newtheorem{remark}[theorem]{Remark}

\theoremstyle{definition}
\newtheorem{definition}[theorem]{Definition}

\theoremstyle{plain}
\newtheorem{lemma}[theorem]{Lemma}

\numberwithin{equation}{section}

\title[Multiplicity One Conjecture]{On the Multiplicity One Conjecture in Min-max theory}

\author[Xin Zhou]{Xin Zhou}
\address{Department of Mathematics, University of California Santa Barbara, Santa Barbara, CA 93106, USA; and School of Mathematics, Institute for Advanced Study, Princeton, NJ 08540, USA}
\email{zhou@math.ucsb.edu}

\maketitle

\pdfbookmark[0]{}{beg}

\begin{abstract}
We prove that in a closed manifold of dimension between 3 and 7 with a bumpy metric, the min-max minimal hypersurfaces associated with the volume spectrum introduced by Gromov, Guth, Marques-Neves, are two-sided and have multiplicity one. This confirms a conjecture by Marques-Neves. 

We prove that in a bumpy metric each volume spectrum is realized by the min-max value of certain relative homotopy class of sweepouts of boundaries of Caccioppoli sets. The main result follows by approximating such min-max value using the min-max theory for hypersurfaces with prescribed mean curvature established by the author with Zhu.
\end{abstract}

\setcounter{section}{-1}

\section{Introduction}
\label{S:intro}

Let $(M^{n+1}, g)$ be a closed orientable Riemannian manifold of dimension $3\leq (n+1)\leq 7$. In \cite{Almgren62}, Almgren proved that the space of mod-2 cycles $\Z_n(M, \mZ_2)$ is weakly homotopic the Eilenberg-MacLane space $K(\mZ_2, 1)=\R\mathbb P^\infty$; (see also \cite{Marques-Neves18} for a simpler proof).  Later, Gromov \cite{Gromov88, Gromov03}, Guth \cite{Guth09}, Marque-Neves \cite{Marques-Neves17} introduced the notion of volume spectrum as a nonlinear version of spectrum for the area functional in $\Z_n(M, \mZ_2)$. In particular, the volume spectrum is a non-decreasing sequence of positive numbers 
\[ 0<\om_1(M, g)\leq \cdots \leq \om_k(M, g)\leq \cdots \to +\infty, \]
which is uniquely determined by the metric $g$ in a given closed manifold $M$.

By adapting the celebrated min-max theory developed by Almgren \cite{Almgren65}, Pitts \cite{Pitts81} (for $3\leq (n+1)\leq 6$), and Schoen-Simon \cite{Schoen-Simon81} (for $n+1=7$), Marques-Neves \cite{Marques-Neves17, Marques-Neves16} proved that each $\om_k(M, g)$ is associated with an integral varifold $V_k$ whose support is a disjoint collection of smooth, connected, closed, embedded, minimal hypersurfaces $\{\Si^k_1, \cdots, \Si^k_{l_k}\}$, such that
\begin{equation}
\label{E:min-max equality}
\om_k(M, g)=\sum_{i=1}^{l_k} m^k_i\cdot \Area(\Si^k_i),
\end{equation}
where $\{m^k_1, \cdots, m^k_{l_k}\}\subset \N$ is a set of positive integers, usually called {\em multiplicities}. We refer to \cite{Smith82, Colding-DeLellis03, DeLellis-Tasnady13, Guaraco18, DeLellis-Ramic16, Li-Zhou16, Colding-Minicozzi08b, Zhou10, Zhou17b, Riviere17} for other variants of this theory.

Our main theorem states that all these integer multiplicities are identically equal to one for a bumpy metric. A metric $g$ is called bumpy if every closed immersed minimal hypersurface is non-degenerate. White proved that the set of bumpy metrics is generic in Baire sense \cite{White91, White17}.
\begin{theoremA}
\label{T:thmA}
Given a closed manifold $M^{n+1}$ of dimension $3\leq (n+1)\leq 7$ with a bumpy metric $g$, the min-max minimal hypersurfaces $\{\Si^k_i: k\in \N, i=1, \cdots, l_k\}$ associated with volume spectrum are all two-sided and have multiplicity one and index bounded by $k$. That is $m^k_i=1$ for all $k\in\N$, $1\leq i\leq l_k$,
\[ \om_k(M, g)=\sum_{i=1}^{l_k} \Area(\Si^k_i), \, \text{ and } \sum_{i=1}^{l_k}\ind(\Si^k_i)\leq k. \]
\end{theoremA}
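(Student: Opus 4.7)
The plan is to execute the two-step strategy sketched in the abstract: first realize $\om_k(M,g)$ as the min-max width of a relative homotopy class of sweepouts by \emph{boundaries of Caccioppoli sets}, and then approximate this width using the prescribed-mean-curvature (PMC) min-max theory of Zhou--Zhu, letting the prescribing function $h\to 0$. The Caccioppoli-set structure combined with the smoothness and two-sidedness of PMC hypersurfaces, interpreted through the bumpy hypothesis, should force the limiting stationary varifold to be two-sided and of multiplicity one.

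For Step 1, I would combine Almgren's identification $\Z_n(M;\mZ_2)\simeq \R\mathbb P^\infty$ with the boundary map $\partial:\C(M)\to\Z_n(M;\mZ_2)$, where $\C(M)$ denotes the space of Caccioppoli sets with the flat topology. Since $\C(M)$ plays the role of the $\mZ_2$-cover $S^\infty$ and $\partial$ is the quotient, every $k$-sweepout $\Phi:X\to\Z_n(M;\mZ_2)$ lifts, after passing to the appropriate $\mZ_2$-cover $\wti X$ of $X$, to a map $\wti\Phi:\wti X\to \C(M)$. The resulting relative homotopy class $\Pi_k$ of pairs $(\wti X,\partial\wti X)\to(\C(M),\{\emptyset,M\})$ should have min-max width equal to $\om_k(M,g)$.

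For Step 2, for each sufficiently small $h\in C^\infty(M)$, the Zhou--Zhu PMC min-max theory applied to $\Pi_k$ produces an almost-embedded, closed, two-sided PMC hypersurface $\Si_h=\partial\Om_h$ realizing a PMC width $\om_k^h$; moreover $\om_k^h$ depends continuously on $h$ and $\om_k^h\to\om_k$ as $h\to 0$. Passing to a subsequential varifold limit $V$, bumpiness of $g$ and Sharp-type compactness upgrade the convergence to smooth graphical convergence with integer multiplicity on each component of $\spt V$. The heart of the proof --- and the main obstacle --- is to show every such component is two-sided with multiplicity one. On a two-sided component $\Si_i$ with multiplicity $m\geq 2$, the convergence produces ordered graphs $u^h_1<\cdots<u^h_m$ solving the PMC equation; rescaling the differences $u^h_j-u^h_1$ by their $C^1$-norms and passing to the limit should extract a nontrivial Jacobi field on $\Si_i$, contradicting bumpiness. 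On a putative one-sided component, the consistent normal of $\Si_h=\partial\Om_h$ is incompatible with the holonomy along non-orientable loops unless one lifts to the two-sided double cover $\wti\Si_i$, where the same rescaled-difference argument yields a Jacobi field on $\wti\Si_i$ and is again ruled out by bumpiness. The technical subtlety is that the PMC equation linearizes to $L\phi=h$ rather than $L\phi=0$, so the Jacobi field must be produced from differences of sheets rather than from the sheets themselves.

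Finally, the index bound $\sum_i\ind(\Si^k_i)\leq k$ should follow by adapting the Marques--Neves deformation theorem to the Caccioppoli/PMC framework. Since each $\Si_i$ is two-sided with multiplicity one by the preceding step, the unstable directions of its Jacobi operator integrate cleanly into area-decreasing deformations (no sign ambiguity or cancellation from higher multiplicity sheets), and a total index exceeding $k$ would allow the construction of a competitor sweepout in $\Pi_k$ with strictly smaller maximal area, contradicting the identification of $\om_k(M,g)$ as the width of $\Pi_k$.
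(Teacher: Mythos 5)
Your overall strategy matches the paper's, but there are two genuine gaps, and the first one sits exactly where you locate ``the heart of the proof.''

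First, the multiplicity-one step. Your plan is to rescale the differences $u^h_j-u^h_1$ of the graphical sheets and extract a nontrivial Jacobi field, then invoke bumpiness. This works only when the top and bottom sheets carry the \emph{same} orientation, because then subtracting the two PMC equations gives $L_{\Si_\infty}(u^m-u^1)+o(\cdot)=\ep\,\partial_\nu h\cdot(u^m-u^1)$, whose renormalized limit is indeed a (nonnegative, nontrivial) Jacobi field. But since $\Si_\ep=\partial\Om_\ep$ bounds, the Constancy Theorem forces the outer normal to alternate between consecutive sheets; when the multiplicity $m$ is even --- in particular in the basic case $m=2$ --- the top and bottom sheets have \emph{opposite} orientations and the subtraction gives $L_{\Si_\infty}(u^m-u^1)+o(\cdot)=-\ep\,(h(x,u^1)+h(x,u^m))$. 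If $(u^m-u^1)$ is comparable to $\ep$, the renormalized limit is a sign-definite solution of $L_{\Si_\infty}\varphi=\pm 2h\vert_{\Si_\infty}$, which is \emph{not} a Jacobi field and is \emph{not} excluded by bumpiness. This is the central obstruction, and your proposal offers no mechanism to rule it out. The paper's resolution is a careful choice of the prescribing function: using Sharp's compactness, there are only finitely many candidate limit hypersurfaces with the relevant area and index bounds, and one constructs $h\in\mathcal S(g)$ (supported near disjoint patches of these hypersurfaces) so that on each candidate $\Si$ the unique solution of $L_\Si\varphi=2h\vert_\Si$ must change sign (Lemma \ref{L:key lemma}). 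Without some counterpart of this, your argument fails for every even multiplicity.

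Second, the realization of $\om_k$ by a nontrivial \emph{relative} class in $\C(M)$. Your proposed class of pairs $(\wti X,\partial\wti X)\to(\C(M),\{\emptyset,M\})$ only makes sense for $k=1$; for $k\geq 2$ the domain of a $k$-sweepout has no natural boundary mapping to $\{\emptyset,M\}$, and since $\C(M)$ is contractible the lifted map is free-homotopically trivial, so the entire content lies in choosing the subcomplex $Z$ on which the sweepout is anchored and verifying $\bL(\tilde\Pi)>\max_{\tilde Z}\M$. The paper does this by taking $Z_i$ to be the locus where $|\Phi_i(x)|$ is varifold-far from the finite set of regular min-max minimal hypersurfaces, deforming $\Phi_i\vert_{Z_i}$ via Pitts's combinatorial argument so that $\sup_{Z_i}\M<\om_k$ (possible precisely because no element of $\bC(\{\Phi_i\vert_{Z_i}\})$ is almost minimizing there), showing that $Y_i=\overline{X\setminus Z_i}$ kills the generator $\bar\la$ so its preimage in $\tilde X$ splits into two disjoint copies $Y_i^\pm$, and then proving $\bL(\tilde\Pi_i)\geq\bL(\Pi)$ by gluing a competitor on $Y_i^+$ with the fixed map on $\tilde Z_i$ and passing to the quotient. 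None of this is automatic, and your proposal does not supply it.
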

\begin{remark}
This solves the {\em Multiplicity One Conjecture} of Marques-Neves \cite[1.2]{Marques-Neves18}; (see also \cite{Marques-Neves16} for an earlier weaker version of this conjecture). We refer to Theorem \ref{T:theorem A} for a more detailed statement of this result. Note that by standard compactness analysis (see \cite{Sharp17}), the same conclusion concerning two-sidedness and multiplicity one also holds true for a metric with positive Ricci curvature. 
\end{remark}
\begin{remark}
This conjecture was proved earlier for 1-parameter min-max constructions under positive Ricci curvature assumption by Marques-Neves \cite{Marques-Neves12}, the author \cite{Zhou15, Zhou17}, and Ketover-Marques-Neves \cite{Ketover-Marques-Neves16}. Later it was fully proved for 1-parameter case by Marques-Neves \cite{Marques-Neves16}. Recently, Chodosh-Mantoulidis \cite{Chodosh-Mantoulidis18} proved this conjecture in dimension three $(n+1)=3$ for the Allen-Cahn setting; (see \cite{Gaspar-Guaraco18} for earlier works along this direction); they also proved that the total index is exactly $k$ for their $k$-min-max solutions when $(n+1)=3$. After our results were poseted, Marques-Neves finished their program and also proved the same optimal index estimates for $3\leq (n+1)\leq 7$ \cite[Addendum]{Marques-Neves18}.
\end{remark}

One motivation of this conjecture is to prove the Yau's conjecture \cite{Yau82} on existence of infinitely many closed minimal surfaces in three manifolds. Combining with the growth estimates of $\{\om_k(M, g)\}$ by Marques-Neves \cite[Theorem 5.1 and 8.1]{Marques-Neves17} and the Frankel Theorem \cite{Frankel66}, we have
\begin{theoremB}
Let $M^{n+1}$ be a closed manifold of dimension $3\leq (n+1)\leq 7$.
\begin{enumerate}[label=(\alph*)]
\item For each bumpy metric $g$, there exists infinitely many smooth, connected, closed, embedded, minimal hypersurfaces.
\item If a metric $g$ has positive Ricci curvature, then there exists a sequence of smooth, connected, closed, embedded, minimal hypersurfaces $\{\Si_k\}_{k\in\N}$, such that 
\[ \Area(\Si_k)\sim k^{\frac{1}{n+1}},\, \text{ as } k\to \infty. \]
\end{enumerate}
\end{theoremB}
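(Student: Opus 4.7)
The plan for Theorem B is to combine Theorem A with two classical ingredients already invoked in the excerpt: the Weyl-type asymptotic bounds on $\{\om_k(M,g)\}$ due to Marques-Neves (their Theorems 5.1 and 8.1) and the Frankel Theorem on intersections of minimal hypersurfaces under positive Ricci curvature. Once Theorem A is in hand, both (a) and (b) reduce to short bookkeeping arguments on the identity
\[
\om_k(M,g)=\sum_{i=1}^{l_k}\Area(\Si^k_i).
\]

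For part (a), I would argue by contradiction. Assume $(M^{n+1},g)$ is bumpy and suppose it carries only finitely many smooth, connected, closed, embedded, minimal hypersurfaces $\Ga_1,\ldots,\Ga_N$. Theorem A forces each associated collection $\{\Si^k_1,\ldots,\Si^k_{l_k}\}$ to be a pairwise disjoint subcollection of $\{\Ga_1,\ldots,\Ga_N\}$, hence
\[
\om_k(M,g)=\sum_{i=1}^{l_k}\Area(\Si^k_i)\leq \sum_{j=1}^{N}\Area(\Ga_j).
\]
This uniform bound contradicts $\om_k(M,g)\to\infty$ from the definition of the volume spectrum, so infinitely many such hypersurfaces must exist. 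It is essential here that the multiplicities equal one: without that conclusion the right-hand side of the min-max identity could grow with $k$ through integer multipliers, leaving the counting argument powerless.

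For part (b), the Frankel Theorem implies that any two closed minimal hypersurfaces in a positive-Ricci manifold must intersect, so a pairwise disjoint family contains at most one element. Consequently $l_k=1$ for every $k$. Combining this with the multiplicity-one extension to positive-Ricci metrics noted in the remark following Theorem A (a consequence of standard Sharp-type compactness), one obtains a single smooth, connected, closed, embedded, minimal hypersurface $\Si_k$ with
\[
\Area(\Si_k)=\om_k(M,g).
\]
Plugging into the Marques-Neves Weyl Law $c_1 k^{1/(n+1)}\leq \om_k(M,g)\leq c_2 k^{1/(n+1)}$ then yields the asserted asymptotic $\Area(\Si_k)\sim k^{1/(n+1)}$.

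The only subtle step in this plan is the positive-Ricci extension of Theorem A's multiplicity-one conclusion, since the statement of Theorem A assumes the metric is bumpy. The remark indicates that a Sharp-type compactness argument (approximating $g$ by nearby bumpy metrics and passing to limits of the corresponding min-max hypersurfaces) preserves multiplicity one, and this is the step that needs the most care in practice; everything else is a formal consequence of Theorem A together with the two classical inputs cited above.
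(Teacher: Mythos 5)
Your proposal is correct and is exactly the derivation the paper intends: Theorem B is stated as an immediate consequence of Theorem A combined with the Marques--Neves growth estimates for $\om_k(M,g)$ and the Frankel Theorem, and your counting argument for (a) and the $l_k=1$ reduction for (b) match that. You also correctly identify that the only nontrivial point is the extension of multiplicity one to positive-Ricci metrics, which the paper likewise dispatches by citing standard compactness \`a la Sharp in the remark following Theorem A.
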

\begin{remark}
Result (a) was already known even without the bumpy assumption by combining Marques-Neves \cite{Marques-Neves17} and Song \cite{Song18}. For a set of generic metrics, Irie-Marques-Neves \cite{Irie-Marques-Neves18} and Marques-Neves-Song \cite{Marques-Neves-Song17} proved denseness and equi-distribution for the space of closed embedded minimal hypersurfaces, using the Weyl Law for volume spectrum by Liokumovich-Marques-Neves \cite{Liokumovich-Marques-Neves18}. Their generic set in principle could be much smaller than the set of bumpy metrics. 

Result (b) was also obtained by Chodosh-Mantoulidis \cite{Chodosh-Mantoulidis18} in dimension three $(n+1)=3$. 
\end{remark}

As a direct corollary of the compactness theory (see \cite{Sharp17}), our multiplicity one result also gives a solution to the {\em Weighted Morse Index Bound Conjecture} by Marques-Neves.

\begin{theoremC}
Let $M^{n+1}$ be a closed manifold of dimension $3\leq (n+1)\leq 7$ with an arbitrary metric $g$. In (\ref{E:min-max equality}), we have
\[ \sum_{\Si_i^k: \text{ orientable}} m_i^k \cdot \ind(\Si_i^k) + \sum_{\Si_i^k: \text{ nonorientable}} \frac{m_i^k}{2} \cdot \ind(\Si_i^k) \leq k. \]
\end{theoremC}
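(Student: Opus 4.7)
The plan is to derive Theorem C from Theorem A by approximating $g$ with bumpy metrics. Let $g_j$ be a sequence of bumpy metrics with $g_j\to g$ in the smooth topology, which exist by White's bumpy metric theorem \cite{White91, White17}. For each $g_j$, Theorem A produces connected, disjoint, two-sided, embedded minimal hypersurfaces $\{\Si^{(j)}_s\}_{s=1}^{N_j}$, counted with multiplicity one, realizing $\om_k(M,g_j)$ and satisfying $\sum_{s}\ind(\Si^{(j)}_s)\leq k$. By continuity of the volume spectrum in the metric and by Sharp's compactness theorem \cite{Sharp17} for minimal hypersurfaces with bounded area and bounded Morse index, along a subsequence the associated varifolds converge to an integer-multiplicity stationary varifold realizing a decomposition $\om_k(M,g)=\sum_i m_i^k\cdot\Area(\Si_i^k)$ as in (\ref{E:min-max equality}); moreover, each $\Si^{(j)}_s$ converges smoothly (away from finitely many points) and with some integer multiplicity to a single limit component $\Si_i^k$.

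The key structural input is a dichotomy for this convergence. Because $M$ is orientable, a connected embedded hypersurface is two-sided if and only if it is orientable. If $\Si_i^k$ is orientable, each approximating $\Si^{(j)}_s$ converging to it must do so with multiplicity one, since any higher multiplicity would, by the graphical form of Sharp's smooth convergence, present $\Si^{(j)}_s$ as a union of disjoint graphical sheets, violating its connectedness. If $\Si_i^k$ is nonorientable, then the only connected two-sided cover compatible with Sharp's graphical convergence is the orientation double cover $\widetilde{\Si}_i^k$, so the convergence is with multiplicity two. Therefore an orientable $\Si_i^k$ receives exactly $m_i^k$ distinct approximating components, whereas a nonorientable $\Si_i^k$ receives exactly $m_i^k/2$ of them (in particular, $m_i^k$ is even). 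Pinning down this correspondence cleanly, and ruling out convergence as a higher even-sheeted cover onto a nonorientable limit, is the main technical point, and it is handled by a covering-space analysis in a tubular neighborhood where Sharp's convergence is graphical.

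To conclude, apply the Morse-index lower semicontinuity built into Sharp's theorem. For an orientable $\Si_i^k$, each of the $m_i^k$ approaching components gives $\ind(\Si_i^k)\leq \liminf_j\ind(\Si^{(j)}_s)$, so they contribute at least $m_i^k\cdot\ind(\Si_i^k)$ to the total index of the approximation. For a nonorientable $\Si_i^k$, lifting in a tubular neighborhood yields smooth multiplicity-one convergence of each double-cover component onto $\widetilde{\Si}_i^k$, whence $\ind(\widetilde{\Si}_i^k)\leq \liminf_j\ind(\Si^{(j)}_s)$; further, the Jacobi operator on $\widetilde{\Si}_i^k$ decomposes into $\mZ_2$-invariant and anti-invariant sectors under the deck involution, with the anti-invariant sector identified with the nonorientable Jacobi operator on $\Si_i^k$, so $\ind(\Si_i^k)\leq \ind(\widetilde{\Si}_i^k)$, yielding a contribution of at least $(m_i^k/2)\cdot\ind(\Si_i^k)$. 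Summing both kinds of contributions and using $\sum_s\ind(\Si^{(j)}_s)\leq k$ for every $j$ delivers the weighted index bound.
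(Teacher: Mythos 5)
Your overall route---approximate $g$ by bumpy metrics $g_j$, apply Theorem A for each $g_j$, and pass to the limit via Sharp's compactness---is exactly the derivation the paper intends (the paper offers no further detail, calling Theorem C a direct corollary of the multiplicity one result and \cite{Sharp17}). The gap is in what you yourself flag as ``the main technical point'': the dichotomy asserting that each connected, two-sided component $\Si_s^{(j)}$ converges with multiplicity exactly one to an orientable limit and exactly two to a nonorientable one. Your justification---that higher multiplicity would exhibit $\Si_s^{(j)}$ as a union of disjoint graphical sheets, contradicting connectedness---ignores the finite set $\mathcal Y$ where Sharp's convergence fails to be graphical: the sheets may be joined by small necks near $\mathcal Y$, so a single connected component can converge with multiplicity $\geq 2$ to a two-sided limit (and with multiplicity $4, 6, \dots$ to a one-sided one). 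Since $g$ is arbitrary, the limit hypersurface may be degenerate and such higher-multiplicity collapse cannot be excluded; no covering-space analysis in a tubular neighborhood can rule it out, because the obstruction is not topological. Under your accounting, one component converging with multiplicity $2$ to an orientable $\Si_i^k$ contributes only $\ind(\Si_i^k)$ while $m_i^k=2$, and the weighted bound does not follow.

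The repair is not to rule this out but to upgrade the index lower semicontinuity so that it counts the multiplicity of convergence: if $\Si_s^{(j)}$ converges to $\Si_i^k$ with $n_s$ graphical sheets away from $\mathcal Y$, then $\ind(\Si_s^{(j)})\geq n_s\,\ind(\Si_i^k)$ when $\Si_i^k$ is two-sided, and $\ind(\Si_s^{(j)})\geq \tfrac{n_s}{2}\ind(\Si_i^k)$ when it is one-sided (working on the orientation double cover and using, as you note, that $\ind(\Si_i^k)$ is computed by the $\tau$-odd sector of $L_{\widetilde\Si_i^k}$). This follows by taking the $\ind(\Si_i^k)$ negative eigenfunctions of the Jacobi operator, truncating them by a logarithmic cutoff near $\mathcal Y$ (points have zero capacity for the $H^1$-energy in dimension $n\geq 2$, so the cutoff costs arbitrarily little in the quadratic form), and transplanting the truncated functions to each of the $n_s$ sheets separately; for $j$ large these $n_s\cdot\ind(\Si_i^k)$ functions span a subspace on which the second variation of $\Si_s^{(j)}$ is negative definite. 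Summing over the components converging to a fixed $\Si_i^k$, using $\sum_s n_s=m_i^k$ (with each $n_s$ even in the one-sided case), and then summing over $i$ against $\sum_s\ind(\Si_s^{(j)})\leq k$ yields the weighted bound. The remaining ingredients of your argument---continuity of $\om_k$ in the metric, the grouping of approximating components by limit component via disjoint tubular neighborhoods, and $\ind(\Si_i^k)\leq\ind(\widetilde\Si_i^k)$---are fine.
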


\subsection{Sketch of the proof}
The key idea of our proof is to approximate the $\Area$-functional by the weighted $\Ah$-functional used in the prescribing mean curvature (PMC) min-max theory developed by the author with Zhu \cite{Zhou-Zhu18}. Note that the $\Ah$-functional is only defined for boundaries of Caccioppoli sets; see (\ref{E: Ah}). A smooth critical point of $\Ah$ is a hypersurface whose mean curvature is prescribed by the restriction of $h$ to itself. There are two crucial parts in the proof. In the first part, we consider min-max construction of minimal hypersurfaces using sweepouts of boundaries of Caccioppoli sets. We observe that in a bumpy metric if one approximates $\Area$ by a sequence $\{\A^{\ep_k h}\}_{k\in \N}$ where $\{\ep_k\}_{k\in\N}\to 0$, and if $h: M\to \R$ is carefully chosen, then the limit min-max minimal hypersurfaces (of min-max PMC hypersurfaces associated with $\A^{\ep_k h}$) are all two-sided and have multiplicity one; see Theorem \ref{T:multiplicity 1 for sweepouts of boundaries}. In the second part, we show that in a bumpy metric the volume spectrum $\om_k(M, g)$ can be realized by the area of some minimal hypersurfaces coming from min-max constructions using sweepouts of boundaries. We now elaborate the detailed ideas.

To implement the idea in the first part, we generalize the PMC min-max theory in \cite{Zhou-Zhu18} to multi-parameter families using continuous sweepouts. Since the space of Caccioppoli sets $\C(M)$ is contractible, there is no nontrivial free homotopy class to do min-max, so we have to consider relative homotopy class. Heuristically, given a $k$-dimensional parameter space $X$, a subset $Z\subset X$, and a continuous map $\Phi_0: X \to \C(M)$, we can consider its relative $(X, Z)$-homotopy class $\Pi=\Pi(\Phi_0)$ consisting of all maps $\Phi: X\to \C(M)$ that are homotopic to $\Phi_0$ and such that $\Phi\vert_Z \equiv \Phi_0\vert_Z$. If the min-max value $\bL^h=\inf\{\max_{x\in X} \Ah(\Phi(x)): \Phi\in\Pi\}$ satisfies the nontriviality condition $\bL^h>\max_{x\in Z}\Ah(\Phi_0(x))$ with respect to the $\Ah$-functional, and if $h$  is chosen in a dense subset $\mathcal S(g)\subset C^\infty(M)$ (depending on the metric $g$, see \cite[Proposition 0.2]{Zhou-Zhu18}), we prove the existence of a smooth closed hypersurface $\Si^h$ of prescribed mean curvature $h$; moreover, it is represented as the boundary $\Si^h=\partial\Om^h$ for some Caccioppoli set $\Om^h$ and $\Ah(\Om^h)=\bL^h$; hence $\Si^h$ is two-sided and have multiplicity one. $\Si^h$ is usually called a min-max PMC hypersurface. We also established Morse index upper bounds following Marques-Neves \cite{Marques-Neves16}. That is, we prove that the Morse index of $\Si^h$ is bounded from above by $k$ (the dimension of parameter space).  

Given a relative homotopy class $\Pi$ as above, consider the min-max construction for the $\Area$-functional and let $\bL=\inf\{ \max_{x\in X} \Area(\partial\Phi(x)) : \Phi\in \Pi\}$. If the nontriviality condition $\bL>\max_{x\in Z}\Area(\partial\Phi_0(x))$ is satisfied, we can approximate $\bL$ by $\bL^{\ep h}$ for a fixed $h\in \mathcal S(g)$ (to be chosen later) and small enough $\ep>0$. We know that $\ep\cdot h$ also belongs to the dense subset $\mathcal S(g)$. Denote $\Si_\ep$ as the min-max PMC hypersurface associated with $\bL^{\ep h}$. As the family $\{\Si_\ep: \ep>0\}$ have uniformly bounded area and Morse index, we can pick a subsequence $\{\Si_k=\Si_{\ep_k}: \ep_k\to0\}$ that converges as varifolds and also locally smooth and graphically away from finitely many points to some limit minimal hypersurface $\Si_\infty$ with integer multiplicity such that $\Area(\Si_\infty)=\bL$. The limit can be extended to a closed embedded minimal hypersurface $\Si_\infty$ across the bad points, and $\Si_\infty$ also has the same Morse index upper bound. Hence $\Si_\infty$ is a min-max minimal hypersurface associated with $\bL$.  As a standard process, if the multiplicity is greater than one, or if a component is one-sided, one can obtain solutions of the Jacobi operator $L_{\Si_\infty}$ of $\Si_\infty$ by taking the limit of the renormalizations of the heights between the top and bottom sheets of $\Si_k$. In particular, there are two possibilities for the limit depending on the orientations of the top and bottom sheets. For simplicity, let us assume that $\Si_\infty$ is connected and two-sided. An easier case happens when the top and bottom sheets have the same orientation, and hence the limit is a nontrivial nonnegative solution $\varphi$ of the Jacobi equation $L_{\Si_\infty} \varphi =0$ which cannot happen in a bumpy metric. When the top and bottom sheets have opposite orientations, the limit is either a nontrivial nonnegative solution to the Jacobi equation, or is a solution $\varphi$ of the following equation
\[ L_{\Si_\infty} \varphi = 2 h\vert_{\Si_\infty}, \quad \text{such that } \varphi \text{ does not change sign}. \]
The key observation is that one can find a $h\in \mathcal S(g)$ so that the unique solution (as $\Si_\infty$ is non-degenerate) of $L_{\Si_\infty} \varphi = 2 h\vert_{\Si_\infty}$ must change sign, and hence $\Si_\infty$ must have multiplicity one; (see Lemma \ref{L:key lemma}).  Indeed, the set of minimal hypersurfaces with bounded area and Morse index in a bumpy metric is finite by the standard compactness results \cite{Sharp17}. On each such $\Si$, we can construct a $h_\Si \in C^\infty(\Si)$ such that the unique solution $f_\Si$ of $L_\Si f_\Si = 2 h_\Si$ must change sign, and we can further make the support of all such $h_\Si$ pairwise disjoint. Since $\mathcal S(g)$ is open and dense, we can pick a $h\in \mathcal S(g)$ that approximates all $h_\Si$ on $\Si$ as close as we want. Then the solution of $L_\Si \varphi = 2 h\vert_{\Si}$ must also change sign. Up to here, we have elucidated how to construct two-sided min-max minimal hypersurfaces with multiplicity one for sweepouts of boundaries of Caccioppoli sets.

Lastly we apply the above multiplicity one result to the volume spectrum. Though the volume spectrum $\om_k(M, g)$ is defined using cohomological relations, Marques-Neves proved in \cite{Marques-Neves16}, using their Morse index estimates, that in a bumpy metric $\om_k(M, g)$ is realized by the min-max value $\bL(\Pi)$ for certain free homotopy class $\Pi$ of maps $\Phi: X\to \Z_n(M^{n+1}, \mZ_2)$, where $X$ is some fixed $k$-dimensional parameter space and $\Z_n(M^{n+1}, \mZ_2)$ is the space of mod-2 cycles. It was observed by Marques-Neves \cite{Marques-Neves18} that the space of Caccioppoli sets $\C(M)$ forms a double cover of $\Z_n(M^{n+1}, \mZ_2)$ via the boundary map $\partial: \C(M)\to \Z_n(M^{n+1}, \mZ_2)$. Therefore, by lifting to the double cover, for each $\Phi\in \Pi$, we can produce a map $\tilde \Phi: \tilde X\to \C(M)$, where $\pi: \tilde X \to X$ is a double cover, such that $\partial \tilde \Phi (x) = \Phi (\pi(x))$. To produce a nontrivial relative homotopy class, we pick a map $\Phi_0\in \Pi$ such that $\max_{x\in X} \Area(\Phi_0(x))$ is very close to $\bL(\Pi)=\om_k(M, g)$.  Let $Z\subset X$ to be the subset where each $\Phi_0(x)$, $x\in Z$, is $\ep$-distance away from the set of smooth closed embedded minimal hypersurface $\Si$ with $\Area(\Si)\leq \bL$ and $\ind(\Si)\leq k$. Note that this set of minimal hypersurfaces is finite in a bumpy metric, hence for $\ep$ small enough the complement $Y=\overline{X\setminus Z} \subset X$ is topologically trivial in the sense that $Y$ does not detect the generator of the cohomological ring of $\Z_n(M^{n+1}, \mZ_2)$.  Therefore the pre-image $\tilde Y=\pi^{-1}(Y)\subset \tilde X$ is homeomorphic to two disjoint identical copies of $Y$, denoted as $Y^+$ and $Y^-$. On the other hand, since no element in $\Phi_0(Z)$ is regular, by Pitts's combinatorial argument, one can homotopically deform $\Phi_0\vert_Z$ so that $\max_{x\in Z}\Area(\Phi_0(x))<\bL$. Now consider the relative $(\tilde X, \tilde Z)$-homotopy class $\tilde \Pi$ generated by the map $\tilde \Phi_0: \tilde X\to \C(M)$. One key observation is that the min-max value $\bL(\tilde\Pi)\geq \bL(\Pi)>\max_{x\in Z}\Area(\Phi_0(x))$.  To see this, given any homotopic deformation $\tilde \Psi: \tilde X\to \C(M)$ of $\tilde\Phi_0$ relative to $(\tilde \Phi_0)\vert_{\tilde Z}$, if $\max_{x\in Y^+}\Area(\partial \tilde\Psi(x)) <\bL(\Pi)$, then we can pass it to quotient and obtain a continuous map $\Psi: X\to \Z_(M, \mZ_2)$ as $Y^+$ and $Y^-$ are disjoint and $\tilde\Psi\vert_{\tilde Z}\equiv (\tilde \Phi_0)\vert_{\tilde Z}$, so that $\max_{x\in X} \Area(\Psi(x))<\bL(\Pi)$, but this is a contradiction as $\Psi$ is homotopic to $\Phi_0$.  Therefore, $\tilde\Pi$ is a nontrivial relative homotopy class in $\C(M)$, and its associated min-max minimal hypersurfaces are two-sided and have multiplicity one. Finally, as the metric is bumpy, the min-max value $\bL(\tilde\Pi)$ of $\tilde\Pi$ is equal to $\bL(\Pi)$ when $\max_{x\in X} \Area(\Phi_0(x))$ is close enough to $\bL(\Pi)=\om_k(M, g)$. Hence we have explained how to construct two-sided min-max minimal hypersurfaces of multiplicity one whose areas realize the volume spectrum.

\subsection{Outline of the paper} 
In Section \ref{S:Multi-parameter min-max for PMC}, we establish the multi-parameter version of min-max theory for prescribing mean curvature hypersurfaces using continuous sweepouts. In Section \ref{S:compactness}, we prove several compactness results for prescribing mean curvature hypersurfaces with uniform area and Morse index upper bounds. In Section \ref{S:Morse index upper bound}, we prove the Morse index upper bound for prescribing mean curvature hypersurfaces produced by our min-max theory. In Section \ref{S:first multiplicity one result}, we prove that min-max minimal hypersurfaces associated with families of boundaries have multiplicity one in a bumpy metric.  Finally, in Section \ref{S:Application to volume spectrum}, we prove the multiplicity one conjecture for volume spectrum.

\subsection*{Acknowledgements} I would like to thank my Ph. D. advisor Richard Schoen for innumerable advices and long-term encouragement and support. I want to thank Brian White for showing me an unpublished notes and for an enlightening conversation which inspired the key idea of this paper. I also want to thank Fernando Marques and Andre Neves, from whom I learned so many things that are used in this work, and also for their comments. Finally, thanks to Jonathan Zhu for the collaboration on the prescribing mean curvature min-max theory which is essentially used here, and to Zhichao Wang for carefully reading the draft and useful comments to improve the presentation. This work was done when I visited the Institute for Advanced Study, and I would like to thank IAS for their support and hospitality. The work was partially supported by NSF grant DMS-1811293.  


\section{Multi-parameter min-max theory for prescribing mean curvature hypersurfaces}
\label{S:Multi-parameter min-max for PMC}

Here we present an adaption to multi-parameter families of the min-max theory for hypersurfaces with {\em prescribed mean curvature} (abbreviated as PMC) established by the author with Zhu \cite{Zhou-Zhu17, Zhou-Zhu18}. Let $\mathcal S=\mathcal S(g)$ (depending on the metric $g$) be the open and dense subset of $C^\infty(M)$ chosen as in \cite[Proposition 0.2]{Zhou-Zhu18}. More precisely, $\mathcal S (g)$ consists of all Morse functions $h$ such that the zero set $\Si_0=\{ h=0 \}$ is a smooth closed embedded hypersurface, and the mean curvature of $\Si_0$ vanishes to at most finite order.  A hypersurface is {\em almost embedded} (sometime also called {\em strongly Alexandrov embedded}) if it locally decomposes
into smooth embedded sheets that touch but do not cross. By \cite[Theorem 3.11]{Zhou-Zhu18}, any almost embedded hypersurface of prescribed mean curvature $h\in\mathcal{S}$ has touching set $(n-1)$-rectifiable, and no component is minimal.  

\subsection*{Notations}
\label{SS:Notation and background}

We collect some notions. We refer to \cite{Simon83} and \cite[\S 2.1]{Pitts81} for further materials in geometric measure theory.

Let $(M^{n+1}, g)$ denote a closed, oriented, smooth Riemannian manifold of dimension $3\leq (n+1)\leq 7$. Assume that $(M, g)$ is embedded in some $\R^L$, $L\in\N$. $B_r(p)$ denotes the geodesic ball of $(M, g)$. 
We denote by $\mH^k$ the $k$-dimensional Hausdorff measure; $\bI_{k}(M)$ (or $\bI_{k}(M, \mZ_2)$) the space of $k$-dimensional integral (or mod 2) currents in $\R^L$ with support in $M$; $\Z_{k}(M)$ (or $\Z_k(M, \mZ_2)$) the space of integral (or mod 2) currents $T\in\bI_{k}(M)$ with $\partial T=0$; $\V_{k}(M)$ the closure, in the weak topology, of the space of $k$-dimensional rectifiable varifolds in $\R^L$ with support in $M$; $G_k(M)$ the Grassmannian bundle of un-oriented $k$-planes over $M$; $\F$ and $\M$ respectively the flat norm \cite[\S 31]{Simon83} and mass norm \cite[26.4]{Simon83} on $\bI_k(M)$; $\mF$ the varifold $\mF$-metric on $\V_k(M)$ and currents $\mF$-metric on $\bI_k(M)$ or $\bI_{k}(M, \mZ_2)$, \cite[2.1(19)(20)]{Pitts81}; $\C(M)$ or $\C(U)$ the space of sets $\Om\subset M$ or $\Om\subset U\subset M$ with finite perimeter (Caccioppoli sets), \cite[\S 14]{Simon83}\cite[\S 1.6]{Giusti84}; and $\X(M)$ or $\X(U)$ the space of smooth vector fields in $M$ or supported in $U$. $\partial\Om$ denotes the (reduced)-boundary of $[[\Om]]$ as an integral current, and $\nu_{\partial\Om}$ denotes the outward pointing unit normal of $\partial \Om$, \cite[14.2]{Simon83}.

We also utilize the following definitions:
\begin{enumerate}[label=(\alph*), leftmargin=1cm]
\label{En: notations}
\item Given $T\in\bI_{k}(M)$, $|T|$ and $\|T\|$ denote respectively the integral varifold and Radon measure in $M$ associated with $T$;
\item Given $c>0$, a varifold $V\in \V_k(M)$ is said to have {\em $c$-bounded first variation in an open subset $U\subset M$}, if
\[ |\de V(X)|\leq c \int_M|X|d\mu_V, \quad \text{for any } X\in\X(U); \]
here the first variation of $V$ along $X$ is $\de V(X)=\int_{G_k(M)} div_S X(x)d V(x, S)$, \cite[\S 39]{Simon83};
\item Given a smooth immersed, closed, orientable hypersurface $\Si$ in $M$, or a set $\Om\in\C(M)$ with finite perimeter, $[[\Si]]$, $[[\Om]]$ denote the corresponding integral currents with the natural orientation, and $[\Si]$ denotes the corresponding integer-multiplicity varifold.
\end{enumerate}

As noted by Marques-Neves \cite[Section 5]{Marques-Neves18}, $\C(M)$ is identified with $\bI_{n+1}(M, \mZ_2)$. In particular, the flat $\F$-norm and the mass $\M$-norm are the same on $\C(M)$. Given $\Om_1, \Om_2\in \C(M)$, the $\mF$-distance between them is:
\[ \mF(\Om_1, \Om_2)=\F(\Om_1-\Om_2)+\mF(|\partial\Om_1|, |\partial\Om_2|). \]
Given $\Om\in\C(M)$, we will denote $\overline{\bB}^\mF_\ep(\Om)=\{\Om'\in \C(M): \mF(\Om', \Om)\leq \ep\}$.

We are interested in the following weighted area functional defined on $\C(M)$. Given $h:M\rightarrow \mathbb{R}$, define the {\em $\Ah$-functional} on $\C(M)$ as
\begin{equation}
\label{E: Ah}
\Ah(\Om)=\mH^n(\partial\Om)-\int_\Omega h\, d\mH^{n+1}. 
\end{equation}
The {\em first variation formula} for $\Ah$ along $X\in \X(M)$ is (see \cite[16.2]{Simon83}) 
\begin{equation}
\label{E: 1st variation for Ah}
\de\Ah\vert_{\Om}(X)=\int_{\partial\Om}div_{\partial \Om}X d\mu_{\partial\Om}-\int_{\partial\Om}h\langle X,\nu\rangle \, d\mu_{\partial\Om},
\end{equation}
where $\nu= \nu_{\partial \Om}$ is the outward unit normal on $\partial \Om$. 

When the boundary $\partial\Om=\Si$ is a smooth immersed hypersurface, we have \[div_{\Si}X=H\langle   X,\nu\rangle,\] where $H$ is the mean curvature of $\Si$ with respect to $\nu$; if $\Om$ is a critical point of $\Ah$, then (\ref{E: 1st variation for Ah}) directly implies that $\Si=\partial \Om$ must have mean curvature $H=h|_\Sigma$. In this case, we can calculate the {\em second variation formula} for $\Ah$ along normal vector fields $X\in \X(M)$ such that $X=\varphi\nu $ along $\partial\Om=\Si$ where $\varphi\in C^\infty(\Si)$, \cite[Proposition 2.5]{BCE88}, 
\begin{equation}
\label{E: 2nd variation for Ah}
\de^2\Ah|_{\Om}(X,X) = \Rom{2}_\Si (\varphi,\varphi) =\int_{\Si}\left( |\nabla\varphi|^2-\left(Ric^M(\nu, \nu)+|A^\Si|^2 + \partial_\nu h\right)\varphi^2\right)d\mu_{\Si}. 
\end{equation}
In the above formula, $\nabla\varphi$ is the gradient of $\varphi$ on $\Si$; $Ric^M$ is the Ricci curvature of $M$; $A^\Si$ is the second fundamental form of $\Si$.

\subsection{Min-max construction for $(X, Z)$-homotopy class}
\label{SS:min-max construction in continuous setting}

In this part, we describe the setup for min-max theory for PMC hypersurfaces associated with multiple parameter families in $\C(M)$.

Let $X^k$ be a cubical complex of dimension $k\in\N$ in some $I^m=[0, 1]^m$ and $Z\subset X$ be a cubical subcomplex. 

Let $\Phi_0: X\to (\C(M), \mF)$ be a continuous map (with respect to the $\mF$-topology on $\C(M)$). We let $\Pi$ be the set of all sequences of continuous (in $\mF$-topology) maps $\{\Phi_i: X\to \C(M)\}_{i\in\N}$ such that :
\begin{enumerate}
\item each $\Phi_i$ is homotopic to $\Phi_0$ in the flat topology on $\C(M)$, and
\item there exist homotopy maps $\{\Psi_i: [0, 1]\times X \to \C(M)\}_{i\in\N}$ which are continuous in the flat topology, $\Psi_i(0, \cdot)=\Phi_i$, $\Psi_i(1, \cdot)=\Phi_0$, and satisfy
\begin{equation}
\label{E:boundary requirement}
\limsup_{i\to\infty} \sup\{\mF(\Psi_i(t, x), \Phi_0(x)): t\in [0, 1], x\in Z\}=0.
\end{equation}
\end{enumerate}
Note that a sequence $\{\Phi_i\}_{i\in\N}$ with $\Phi_i= \Phi_0$ for all $i\in\N$ belongs to $\Pi$. 

\begin{definition}
Given a pair $(X, Z)$ and $\Phi_0$ as above, $\{\Phi_i\}_{i\in\N}$ is called a {\em $(X, Z)$-homotopy sequence of mappings into $\C(M)$}, and $\Pi$ is called the {\em $(X, Z)$-homotopy class of $\Phi_0$}.
\end{definition}

\begin{remark}
$\Pi$ can be viewed as the relative homotopy class for $\Phi_0$ in $(\C(M), \Phi_0 \vert_Z)$. However, we cannot fix the values $\Phi_i\vert_Z$ to be exactly $\Phi_0\vert_Z$. In fact, in the later discretization/interpolation process, we will allow $\Phi_i\vert_Z$ to deviate slightly from $\Phi_0 \vert_Z$; but the deviations will converge to zero as $i\to\infty$.
\end{remark}

\begin{definition}
\label{D:h-width}
The {\em $h$-width} of $\Pi$ is defined by:
\[ \bL^h = \bL^h(\Pi) = \inf_{\{\Phi_i\}\in \Pi}\limsup_{i\to\infty} \sup_{x\in X}\{ \Ah(\Phi_i(x))\}. \]
\end{definition}

\begin{definition}
\label{D:min-max sequence}
A sequence $\{\Phi_i\}_{i\in\N}\in \Pi$ is called a {\em min-max sequence} if
\[ \bL^h(\Phi_i):=\sup_{x\in X} \Ah(\Phi_i(x)) \]
satisfies $\bL^h(\{\Phi_i\}):=\limsup_{i\to\infty} \bL^h(\Phi_i)=\bL^h(\Pi)$.
\end{definition}

\begin{lemma}
Given $\Phi_0$ and $\Pi$, there exists a min-max sequence.
\end{lemma}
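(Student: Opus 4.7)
The statement is essentially a diagonalization: by definition of $\bL^h(\Pi)$ as an infimum of $\limsup$s, we can extract a single sequence realizing the infimum. The only subtlety is that the diagonal sequence must still be in $\Pi$, i.e., we must verify both that each diagonal entry is homotopic to $\Phi_0$ and that the homotopy maps still satisfy the boundary decay condition \eqref{E:boundary requirement}.

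The plan is as follows. First, by the definition of $\bL^h(\Pi)$ as an infimum, for each $j\in\N$ I can choose a sequence $\{\Phi^{j}_i\}_{i\in\N}\in \Pi$ with associated homotopy maps $\{\Psi^{j}_i\}_{i\in\N}$ such that
\[
\limsup_{i\to\infty}\sup_{x\in X}\Ah(\Phi^{j}_i(x)) \le \bL^h(\Pi)+\tfrac{1}{j},
\]
and
\[
\limsup_{i\to\infty}\sup\{\mF(\Psi^{j}_i(t,x),\Phi_0(x)):t\in[0,1],\,x\in Z\}=0.
\]
Next, for each $j$, I pick an index $i(j)$ large enough so that both
\[
\sup_{x\in X}\Ah(\Phi^{j}_{i(j)}(x))\le \bL^h(\Pi)+\tfrac{2}{j}
\]
and
\[
\sup\{\mF(\Psi^{j}_{i(j)}(t,x),\Phi_0(x)):t\in[0,1],\,x\in Z\}\le \tfrac{1}{j}
\]
hold simultaneously. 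This is possible because each supremum can be made arbitrarily small for large $i$, independently.

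Finally I define the diagonal sequence $\Phi_j:=\Phi^{j}_{i(j)}$ with homotopy maps $\Psi_j:=\Psi^{j}_{i(j)}$. Each $\Phi_j$ is flat-continuously homotopic to $\Phi_0$ via $\Psi_j$, so condition (1) in the definition of $\Pi$ holds. The boundary decay condition (2) follows from the second bound above, which gives
\[
\limsup_{j\to\infty}\sup\{\mF(\Psi_j(t,x),\Phi_0(x)):t\in[0,1],\,x\in Z\}\le \limsup_{j\to\infty}\tfrac{1}{j}=0.
\]
Hence $\{\Phi_j\}\in \Pi$. Moreover,
\[
\limsup_{j\to\infty}\sup_{x\in X}\Ah(\Phi_j(x))\le \bL^h(\Pi),
\]
while the reverse inequality is automatic from the definition of $\bL^h(\Pi)$. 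Therefore $\{\Phi_j\}$ is a min-max sequence.

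The argument is entirely formal and presents no obstacle; the only care required is to interleave the two conditions (the $\Ah$-bound and the $\mF$-closeness on $Z$) when choosing the diagonal indices, which is why one must extract both pieces of data (the maps $\Phi^{j}_i$ and the homotopies $\Psi^{j}_i$) simultaneously before diagonalizing.
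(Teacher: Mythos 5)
Your proof is correct and follows essentially the same diagonalization argument as the paper: extract near-optimal sequences, then choose diagonal indices large enough to control both the $\Ah$-values and the $\mF$-closeness of the homotopies on $Z$ simultaneously, with the reverse inequality for the $\limsup$ coming for free from the definition of $\bL^h(\Pi)$ as an infimum over $\Pi$.
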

\begin{proof}
Take a sequence $\{ \{\Phi^\al_i\}_{i\in\N} \}_{\al\in\N}$ in $\Pi$, such that 
\[ \lim_{\al\to\infty} \bL^h(\{\Phi^\al_i\}_{i\in\N})=\bL^h(\Pi). \]
Now we pick up a new sequence by a diagonalization process. Take a sequence $\ep_\al\to 0$. For each $\al$, we pick $i_\al\in\N$, such that
\[ \sup_{t\in[0, 1], x\in Z}\mF(\Psi^\al_{i_\al}(t, x), \Phi_0(x))<\ep_\al, \text{ and}\]
\[ \bL^h(\{\Phi^\al_i\})-\ep_\al \leq \sup_{x\in X}\Ah(\Phi^\al_{i_\al}(x))\leq \bL^h(\{\Phi^\al_i\})+\ep_\al, \]  
where $\Psi^\al_{i_\al}$ is the homotopy between $\Phi^\al_{i_\al}$ and $\Phi_0$ in the flat topology. 
Hence the sequence $\{\Phi^\al_{i_\al}\}_{\al\in\N}$ belongs to $\Pi$ and is a min-max sequence.
\end{proof}

\begin{definition}
The {\em image set} of $\{\Phi_i\}_{i\in\N}$ is defined by
\[ \bK(\{\Phi_i\})=\{ V=\lim_{j\to\infty}|\partial \Phi_{i_j}(x_j)| \text{ as varifolds}: x_j\in X\}. \]

If $\{\Phi_i\}_{i\in\N}$ is a min-max sequence in $\Pi$, the {\em critical set} of $\{\Phi_i\}$ is defined by
\[ \bC(\{\Phi_i\})=\{V=\lim_{j\to\infty}|\partial \Phi_{i_j}(x_j)| \text{ as varifolds}:\, \text{with }\lim_{j\to\infty}\Ah(\Phi_{i_j}(x_j))=\bL^h(\Pi)\}. \]
\end{definition}

\vspace{1em}
Now we are ready to state the continuous version of min-max theory for PMC hypersurfaces associated with a $(X, Z)$-homotopy class. It is a generalization of \cite[Theorem 4.8 and Proposition 7.3]{Zhou-Zhu18}, and the proof is given in Section \ref{SS:proof of min-max theorem}.

\begin{theorem}[Min-max theorem]
\label{T:main min-max theorem}
Let $(M^{n+1}, g)$ be a closed Riemannian manifold of dimension $3\leq (n+1)\leq 7$, and $h\in\mathcal{S}(g)$ which satisfies $\int_M h\geq 0$.  Given a map $\Phi_0: X\to (\C(M), \mF)$ continuous in the $\mF$-topology and the associated $(X, Z)$-homotopy class $\Pi$, suppose
\begin{equation}
\label{E:nontrivial assumption}
\bL^h(\Pi)>\max_{x\in Z}\Ah(\Phi_0(x)).
\end{equation}
Let $\{\Phi_i\}_{i\in\N}\in\Pi$ be a min-max sequence for $\Pi$. Then there exists $V\in \bC(\{\Phi_i\})$ induced by a nontrivial, smooth, closed, almost embedded hypersurface $\Si^n\subset M$ of prescribed mean curvature $h$ with multiplicity one. 

Moreover, $V=\lim_{j\to\infty}|\partial \Phi_{i_j}(x_j)|$ for some $\{i_j\}\subset \{i\}$, $\{x_j\}\subset X\backslash Z$, with $\lim_{j\to\infty} \Ah(\Phi_{i_j}(x_j))=\bL^h(\Pi)$, and $\Phi_{i_j}(x_j)$ converges in the $\mF$-topology to some $\Om\in\C(M)$ such that $\Si=\partial\Om$ where its mean curvature with respect to the unit outer normal is $h$, and  
\[ \Ah(\Om)=\bL^h(\Pi). \]
\end{theorem}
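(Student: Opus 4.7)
The plan is to reduce to the discrete, one-parameter PMC min-max theorem of Zhou--Zhu \cite{Zhou-Zhu18} via a Marques--Neves-style discretization--interpolation procedure carried out in the $\mF$-topology, adapted both to the multi-parameter setting and to the relative pair $(X, Z)$, with the weighted functional $\Ah$ in place of the area.

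The first step is discretization. Starting from a min-max sequence $\{\Phi_i\}_{i\in\N} \in \Pi$, I would use that $\mF$-continuous families in $\C(M)$ have no mass concentration to produce, for each $i$, a sequence of discrete maps defined on the vertex sets of finer and finer cubical subdivisions of $X$, with values in $\C(M)$, whose $\mF$-fineness tends to zero, whose values approximate $\Phi_i$ in $\mF$, and whose maximum $\Ah$-values converge to $\sup_{x\in X}\Ah(\Phi_i(x))$. The relative homotopy condition (\ref{E:boundary requirement}) ensures that the restrictions of these discrete maps to the discretization of $Z$ remain within $\mF$-distance $o(1)$ of $\Phi_0\vert_Z$. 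A Pitts-style filling argument in the reverse direction shows that the induced relative discrete $h$-width equals the continuous one, $\bL^h(\Pi)$.

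The second step is to apply a multi-parameter relative version of the discrete PMC min-max theorem of \cite{Zhou-Zhu18}. The strict gap $\bL^h(\Pi) > \max_{x\in Z}\Ah(\Phi_0(x))$ allows one to perform a pull-tight deformation supported away from $Z$, producing, together with the Almgren--Pitts combinatorial lemma, an almost minimizing varifold $V \in \bC(\{\Phi_i\})$ with $h$-bounded first variation. The regularity theorem from \cite{Zhou-Zhu18}, valid for $h\in \mathcal S(g)$ in the dimension range $3\le n+1 \le 7$, identifies $V$ as $[\Si]$ for some smooth, closed, almost embedded hypersurface $\Si$ of prescribed mean curvature $h$ with multiplicity one; the hypothesis $\int_M h \ge 0$ then lets us write $\Si = \partial \Om$ with $h$ being the mean curvature with respect to the outward unit normal. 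The strict gap in (\ref{E:nontrivial assumption}) forces the realizing points $x_j$ to lie in $X \setminus Z$, and the $\mF$-continuity of the weighted functional promotes the varifold convergence to $\mF$-convergence of $\Phi_{i_j}(x_j)$ to $\Om$, yielding $\Ah(\Om) = \bL^h(\Pi)$.

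The hardest part, I expect, is the relative discretization step. In the absolute (free-homotopy) case the Marques--Neves discretization--interpolation is by now standard, but here one must simultaneously control the behavior on $Z$ so that no spurious $\Ah$-mass is introduced near the boundary. This requires performing the interpolation only on $X \setminus Z$ and gluing across its boundary using the flat homotopies $\Psi_i$ supplied by (\ref{E:boundary requirement}), checking that the glued discretization still has maximum $\Ah$-values bounded by $\sup_{x\in X}\Ah(\Phi_i(x)) + o(1)$ and that no interior vertex near $Z$ inherits an $\Ah$-value close to $\bL^h(\Pi)$. The functional $\Ah$ differs from the area by a uniformly Lipschitz (with respect to $\F$) volume term, so the quantitative interpolation estimates of the area case transfer with only cosmetic modifications, and the genuine difficulty is the bookkeeping near the relative boundary.
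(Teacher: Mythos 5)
Your proposal follows essentially the same route as the paper: discretize the $\mF$-continuous min-max sequence \`a la Marques--Neves, run the Almgren--Pitts combinatorial deformation to extract an $h$-almost minimizing varifold (using the gap (\ref{E:nontrivial assumption}) to ensure the deformation only touches near-maximal slices and hence stays in the relative class $\Pi$), and then invoke the Zhou--Zhu regularity for $h\in\mathcal S(g)$. The one place where the paper is simpler than you anticipate is the relative discretization: since the whole map $\Phi_i$ is $\mF$-continuous, the interpolation is done globally on $X$ (no gluing across $\partial Z$ is needed), and the only residual technical point is that a critical varifold lying in $|\partial\Phi_0|(Z)$ must still be shown to have $c$-bounded first variation, which the paper handles by a removable-singularity volume-growth argument.
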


\subsection{Pull-tight}

Now we describe the {\bf pull-tight} process in \cite[Section 5]{Zhou-Zhu18}. Let $c=\sup_M|h|$, and $L^c=2\bL^h+c\vol(M)$. Denote 
\[ A^c_\infty=\{V\in\V_n(M): \|V\|(M)\leq L^c, V \text{ has $c$-bounded first variation, or } V\in |\partial\Phi_0|(Z) \}. \]
We can follow \cite[Section 4]{Zhou-Zhu17} or \cite[Section 5]{Zhou-Zhu18} to construct a continuous map: 
\[ H: [0, 1]\times (\C(M), \mF)\cap\{\M(\partial \Om)\leq L^c\}\to (\C(M), \mF)\cap \{\M(\partial \Om)\leq L^c\} \]
such that:
\begin{itemize}
\item[(\rom{1})] $H(0, \Om)=\Om$ for all $\Om$;
\item[(\rom{2})] $H(t, \Om)=\Om$ if $|\partial\Om|\in A^c_\infty$;
\item[(\rom{3})] if $|\partial\Om|\notin A^c_\infty$,
\[ \Ah(H(1, \Om))-\Ah(\Om)\leq -L(\mF(|\partial\Om|, A^c_\infty))<0; \]
here $L: [0, \infty) \to [0, \infty)$ is a continuous function with $L(0)=0$, $L(t)>0$ when $t>0$;
\item[(\rom{4})] for every $\ep>0$, there exists $\de>0$ such that
\[ x\in Z,\, \mF(\Om, \Phi_0(x))<\de \Longrightarrow \mF(H(t, \Om), \Phi_0(x))<\ep,\, \text{ for all } t\in[0, 1]; \]
this is a direct consequence of (\rom{2}) since $|\partial\Phi_0|(Z)\subset A^c_\infty$.
\end{itemize}
Note that to construct $H$, the only modification of \cite[\S 5.1]{Zhou-Zhu18} is to add $|\partial\Phi_0|(Z)$ into the definition of $A^c_\infty$ as we want to fix the values assumed on $Z$ in the tightening process; all other steps in \cite[\S 5.1]{Zhou-Zhu18} carry out the same way. In particular, (using notions in \cite[\S 5.1]{Zhou-Zhu18}), $H(t, \Om):= \big(\Psi_{|\partial \Om|}(t)\big)(\Om)$.

\begin{lemma}
Given a min-max sequence $\{\Phi_i^*\}_{i\in\N}\in\Pi$, we define $\Phi_i(x)=H(1, \Phi^*_i(x))$ for every $x\in X$. Then $\{\Phi_i\}_{i\in\N}$ is also a min-max sequence in $\Pi$. Moreover, $\bC(\{\Phi_i\})\subset \bC(\{\Phi^*_i\})$ and every element of $\bC(\{\Phi_i\})$ either has $c$-bounded first variation, or belongs to $|\partial \Phi_0|(Z)$.
\end{lemma}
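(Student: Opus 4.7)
The plan is to verify the three assertions using properties (\rom{1})--(\rom{4}) of the tightening map $H$ together with standard compactness in $\C(M)$. First I would check that $\{\Phi_i\}_{i\in\N}$ lies in $\Pi$. Continuity of $\Phi_i$ in the $\mF$-topology is immediate from the $\mF$-continuity of $H(1,\cdot)$ and of $\Phi_i^*$. A flat-topology homotopy $\tilde\Psi_i$ from $\Phi_i$ to $\Phi_0$ is obtained by concatenating the reversed $H$-path $s\mapsto H(1-2s,\Phi_i^*(\cdot))$ for $s\in[0,1/2]$, which connects $\Phi_i$ back to $\Phi_i^*$, with the pre-existing $\Psi_i^*$ reparametrized on $s\in[1/2,1]$. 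Its restriction to $Z$ satisfies the analog of (\ref{E:boundary requirement}): on the second segment this is the hypothesis on $\Psi_i^*$; on the first segment I would invoke property (\rom{4}) together with the bound $\sup_{x\in Z}\mF(\Phi_i^*(x),\Phi_0(x))\leq \sup_{t,\,x\in Z}\mF(\Psi_i^*(t,x),\Phi_0(x))\to 0$.

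Next, for the min-max property, properties (\rom{2}) and (\rom{3}) give the pointwise bound $\Ah(\Phi_i(x))\leq \Ah(\Phi_i^*(x))$, hence
\begin{equation*}
\bL^h(\Pi)\leq \limsup_{i\to\infty}\sup_{x\in X}\Ah(\Phi_i(x))\leq \limsup_{i\to\infty}\sup_{x\in X}\Ah(\Phi_i^*(x))=\bL^h(\Pi),
\end{equation*}
where the first inequality uses $\{\Phi_i\}\in\Pi$ and the last equality uses that $\{\Phi_i^*\}$ is a min-max sequence. Thus $\{\Phi_i\}$ is itself a min-max sequence.

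For the inclusion $\bC(\{\Phi_i\})\subset\bC(\{\Phi_i^*\})$ and the dichotomy, fix $V=\lim_j|\partial\Phi_{i_j}(x_j)|$ with $\Ah(\Phi_{i_j}(x_j))\to\bL^h(\Pi)$. Sandwiching $\Ah(\Phi_{i_j}^*(x_j))$ between $\Ah(\Phi_{i_j}(x_j))$ and $\sup_x\Ah(\Phi_{i_j}^*(x))$ forces $\Ah(\Phi_{i_j}^*(x_j))\to \bL^h(\Pi)$ as well, and then property (\rom{3}) yields $L\bigl(\mF(|\partial\Phi_{i_j}^*(x_j)|,A^c_\infty)\bigr)\to 0$, whence $\mF(|\partial\Phi_{i_j}^*(x_j)|,A^c_\infty)\to 0$. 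Using the uniform bound $\M(\partial\Phi_{i_j}^*(x_j))\leq L^c$, I would pass to a further subsequence so that $\Phi_{i_j}^*(x_j)\to \Om^*$ in the $\mF$-topology of $\C(M)$; the common varifold limit $V^*:=|\partial\Om^*|$ then lies in $A^c_\infty$ by $\mF$-closedness of that set. Continuity of $H(1,\cdot)$ combined with property (\rom{2}) applied at $\Om^*$ gives $\Phi_{i_j}(x_j)=H(1,\Phi_{i_j}^*(x_j))\to H(1,\Om^*)=\Om^*$ in $\mF$, so $V=V^*$. This simultaneously places $V$ in $\bC(\{\Phi_i^*\})$ and in $A^c_\infty$, which is precisely the claimed dichotomy.

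The delicate step is the identification $V=V^*$: the $\Ah$-monotonicity and the conclusion $V^*\in A^c_\infty$ are essentially mechanical consequences of (\rom{3}), but matching the varifold limits of the deformed and undeformed sequences uses the strict identity property (\rom{2}) on $A^c_\infty$, applied at the common $\mF$-limit $\Om^*$ extracted by a compactness argument. A supporting point one should record is that $A^c_\infty$ is $\mF$-closed, which follows from lower semicontinuity of mass and of the $c$-bounded first variation condition under varifold convergence, together with compactness of $|\partial\Phi_0|(Z)$ as the continuous image of the cubical subcomplex $Z$.
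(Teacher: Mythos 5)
Your treatment of membership in $\Pi$ and of the min-max property is correct and matches the paper's. The gap is in the identification $V=V^*$, exactly the step you flag as delicate. You propose to extract a subsequence with $\Phi^*_{i_j}(x_j)\to\Om^*$ in the $\mF$-topology of $\C(M)$ and to take $V^*=|\partial\Om^*|$ as ``the common varifold limit.'' But the sublevel set $\{\Om\in\C(M):\M(\partial\Om)\leq L^c\}$ is \emph{not} precompact in the $\mF$-metric: a mass bound gives flat (BV) compactness of the sets and varifold compactness of the boundaries along subsequences, but the varifold limit of $|\partial\Phi^*_{i_j}(x_j)|$ need not equal $|\partial\Om^*|$ for the flat limit $\Om^*$ (mass can cancel in the flat limit while surviving in the varifold limit). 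Consequently you cannot assert $\mF$-convergence in $\C(M)$, you cannot conclude $|\partial\Om^*|\in A^c_\infty$ from $\mF(|\partial\Phi^*_{i_j}(x_j)|,A^c_\infty)\to 0$ (which only controls the genuine varifold limit), and the application of continuity of $H(1,\cdot)$ together with property (\rom{2}) at $\Om^*$ is unsupported.

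The paper closes this step differently, and the difference is essential: the tightening map has the form $H(t,\Om)=\big(\Psi_{|\partial\Om|}(t)\big)(\Om)$, i.e.\ it pushes $\Om$ by an ambient isotopy that depends continuously on the \emph{varifold} $|\partial\Om|$. Hence $H$ induces a continuous map on $\{V\in\V_n(M):\|V\|(M)\leq L^c\}$, which \emph{is} compact in the varifold $\mF$-metric, and the identification
\[
V=\lim_{j\to\infty}\big|\partial H(1,\Phi^*_{i_j}(x_j))\big|=H\big(1,\lim_{j\to\infty}|\partial\Phi^*_{i_j}(x_j)|\big)=H(1,V^*)=V^*
\]
takes place entirely at the varifold level, where $V^*\in A^c_\infty$ follows from the compactness of $A^c_\infty$ in $\V_n(M)$ (your argument for closedness of $A^c_\infty$ under varifold convergence is fine and is used here). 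To repair your proof you should replace the $\mF$-compactness claim in $\C(M)$ by this varifold-level continuity of $H$, which requires invoking the specific construction of the pull-tight map rather than only properties (\rom{1})--(\rom{4}).
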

\begin{proof}
By continuity of $H$, we know that $\Phi_i$ is homotopic to $\Phi^*_i$ in the flat topology. By (\rom{4}), $\{\Psi_i(t, x)=H(t, \Phi_i^*(x))\}$ satisfies (\ref{E:boundary requirement}), and hence $\{\Phi_i\}\in\Pi$. By (\rom{2})(\rom{3}), $\Ah(\Phi_i(x))\leq \Ah(\Phi^*_i(x))$ for every $x\in X$, so $\{\Phi_i\}$ is also a min-max sequence. Finally, given any $V\in \bC(\{\Phi_i\})$, then $V=\lim_{j\to\infty}|\partial \Phi_{i_j}(x_j)|$ where $\lim_{j\to\infty}\Ah(\Phi_{i_j}(x_j))=\bL^h$. Denote $V^*=\lim_{j\to\infty}|\partial \Phi^*_{i_j}(x_j)|$. By (\rom{3}), $\lim_{j\to\infty}\mF(|\partial \Phi^*_{i_j}(x_j)|, A^c_\infty)=0$ (as $\lim_{j\to\infty}\Ah(\Phi_{i_j}(x_j))=\lim_{j\to\infty}\Ah(\Phi^*_{i_j}(x_j))=\bL^h$), so $V^*\in A^c_\infty$. On the other hand,
\[ V= \lim_{j\to\infty}|\partial H(1, \Phi^*_{i_j}(x_j))|=H(1, \lim_{j\to\infty} |\partial\Phi^*_{i_j}(x_j)|)=H(1, V^*)=V^*.\]
(Note that $H$ is also well defined as a continuous map $H: [0, 1]\times \{V\in\V_n(M), \|V\|(M)\leq L^c\} \to \{V\in\V_n(M), \|V\|(M)\leq L^c\}$.) Hence $\bC(\{\Phi_i\})\subset \bC(\{\Phi^*_i\})$ and the proof is finished.
\end{proof} 

\begin{definition}
\label{D:pulled-tight}
Let $c=\sup_M|h|$. Any min-max sequence $\{\Phi_i\}_{i\in\N}\in\Pi$ such that every element of $\bC(\{\Phi_i\})$ has $c$-bounded first variation or belongs to $|\partial\Phi_0|(Z)$ is called {\em pulled-tight}.
\end{definition}

\subsection{Discretization and interpolation results}

We record several discretization and interpolation results developed by Marques-Neves \cite{Marques-Neves14, Marques-Neves17}. Though these results were proven for sweepouts in $\Z_n(M, \mZ)$ or $\Z_n(M, \mZ_2)$, they work well for sweepouts in $\C(M)$. We will point out necessary modifications.

We refer to Appendix \ref{A:cubical complex structures} for the notion of cubic complex structure on $X$. We refer to \cite[Section 4]{Zhou-Zhu18} for the notion of discrete sweepouts. Though all definitions therein were made when $X=[0, 1]$, there is no change for discrete sweepouts on $X$.  

Recall that given a map $\phi: X(k)_0\to \C(M)$, the {\em fineness of $\phi$} is defined as
\[ \mf(\phi)=\sup\{ \F(\phi(x)-\phi(y))+\M(\partial\phi(x)-\partial\phi(y)):\, x, y \text{ are adjacent vertices in } X(k)_0 \}. \]

\begin{definition}[c.f. \S 3.7 in \cite{Marques-Neves17}]
Given a continuous (in the flat topology) map $\Phi: X\to\C(M)$, we say that $\Phi$ has {\em no concentration of mass} if
\[ \lim_{r\to 0} \sup\{\|\partial \Phi(x)\|(B_r(p)), p\in M, x\in X\}=0. \]
\end{definition}

The purpose of the next theorem is to construct discrete maps out of a continuous map in flat topology. 

\begin{theorem}
\label{T:continuous to discrete}
Let $\Phi: X\to \C(M)$ be a continuous map in the flat topology that has no concentration of mass, and $\sup_{x\in X}\M(\partial\Phi(x))<+\infty$. Assume that $\Phi\vert_Z$ is continuous under the $\mF$-topology. Then there exist a sequence of maps
\[ \phi_i: X(k_i)_0 \to \C(M), \]
and a sequence of homotopy maps:
\[ \psi_i: I(k_i)_0 \times X(k_i)_0 \to \C(M), \]
with $k_i<k_{i+1}$, $\psi_i(0, \cdot)=\phi_{i-1}\circ \n(k_i, k_{i-1})$, $\psi_i(1, \cdot)=\phi_i$, and a sequence of numbers $\{\de_i\}_{i\in \N}\to 0$ such that
\begin{itemize}
\item[(\rom{1})] the fineness $\mf(\psi_i)<\de_i$;
\item[(\rom{2})] \[\sup\{\F(\psi_i(t, x)-\Phi(x)): t\in I(k_i)_0, x\in X(k_i)_0\}\leq \de_i; \]
\item[(\rom{3})] for some sequence $l_i\to\infty$, with $l_i<k_i$ 
\[ \M(\partial\psi_i(t, x))\leq \sup\{ \M(\partial\Phi(y)): x, y\in \al, \, \text{ for some } \al\in X(l_i) \}+\de_i; \]
and this directly implies that
\[\sup\{ \M(\partial\phi_i(x)): x\in X(k_0)_0\}\leq \sup\{\M(\partial\Phi(x)): x\in X\}+\de_i.\]
\end{itemize}

As $\Phi\vert_Z$ is continuous in $\mF$-topology, we have from (\rom{3}) that for all $t\in I(k_i)_0$ and $x\in Z(k_i)_0$
\[ \M(\partial \psi_i(t, x))\leq \M(\partial\Phi(x))+\eta_i \]
with $\eta_i\to 0$ as $i\to\infty$. Applying \cite[Lemma 4.1]{Marques-Neves14} with $\mathcal S=\Phi(Z)$, we get by (\rom{2}) that

\begin{itemize}
\item[(\rom{4})] \[ \sup\{ \mF(\psi_i(t, x), \Phi(x)):\, t\in I(k_i)_0, x\in Z(k_i)_0\}\to 0, \text{ as } i\to\infty. \]
\end{itemize}

Now given $h\in C^\infty(M)$, denoting $c=\sup_M|h|$, then we have from (\rom{2})(\rom{3}) that
\begin{itemize}
\item[(\rom{5})] \[ \Ah(\phi_i(x))\leq \sup\{ \Ah(\Phi(y)): \al\in X(l_i), x, y\in \al \}+(1+c)\de_i; \]
and hence
\[\sup\{ \Ah(\phi_i(x)): x\in X(k_i)_0\}\leq \sup\{\Ah(\Phi(x)): x\in X\}+(1+c)\de_i.\]
\end{itemize}
\end{theorem}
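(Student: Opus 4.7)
The plan is to adapt the Marques--Neves discretization theorem for continuous sweepouts in the space of mod-$2$ cycles (see \cite{Marques-Neves14, Marques-Neves17}) to the setting where the target is $\C(M)$. The adaptation is essentially formal because $\C(M)$ is identified with $\bI_{n+1}(M, \mZ_2)$ via $\Om \mapsto [[\Om]]$, and on this space the flat $\F$-norm coincides with the mass $\M$-norm and equals the volume of the symmetric difference. I would first choose $\de_i \to 0$ together with scales $l_i \ll k_i$, $k_i \to \infty$, and produce discrete maps $\phi_i : X(k_i)_0 \to \C(M)$ and Almgren-type homotopies $\psi_i$ satisfying (i) and (ii). The no-concentration-of-mass hypothesis plays exactly the role it plays in Marques--Neves: it promotes the flat continuity of $\Phi$ to an equicontinuity statement on sufficiently fine scales, which is the input required for the cone/isoperimetric filling to bound the boundary mass of the filling chains. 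For (iii), the scale $l_i$ is chosen so that the oscillation of $\M(\partial \Phi(\cdot))$ on each $l_i$-cell is at most $\de_i$, after which the Almgren interpolation $\psi_i(\cdot, x)$ only probes values of $\Phi$ within a single $l_i$-cell containing $x$.

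Once (i)--(iii) are in hand, item (iv) follows by invoking \cite[Lemma 4.1]{Marques-Neves14} with $\mathcal{S} = \Phi(Z) \subset \C(M)$: the hypothesis that $\Phi|_Z$ is $\mF$-continuous makes $\Phi(Z)$ compact in the $\mF$-topology, and the hypotheses of that lemma are then supplied by the flat closeness (ii) and the $Z$-localized mass bound $\M(\partial \psi_i(t,x)) \leq \M(\partial \Phi(x)) + \eta_i$, which in turn follows from (iii) combined with the $\mF$-continuity of $\Phi|_Z$. The lemma then promotes flat closeness to $\mF$-closeness on $Z(k_i)_0$. Item (v) is a direct calculation: writing $\Ah(\phi_i(x)) = \M(\partial \phi_i(x)) - \int_{\phi_i(x)} h$, the first term is bounded by (iii), while the second is controlled via $\bigl|\int_A h - \int_B h\bigr| \leq c \cdot \vol(A \triangle B) = c \cdot \F(A - B)$ for $A, B \in \C(M)$, together with (ii) and the triangle inequality applied to any $y$ in the same $l_i$-cell as $x$.

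The hard part is the careful re-execution of Almgren's isoperimetric filling construction in the $(n+1)$-dimensional Caccioppoli setting while simultaneously measuring fineness in the combined norm $\F + \M\circ\partial$ and keeping $\psi_i$ compatible with the relative condition along $Z$ (so that the later interpolation back to a continuous sweepout stays within the $(X,Z)$-homotopy class $\Pi$). The key simplification that makes this adaptation tractable is that in the top dimension the flat norm of a Caccioppoli set coincides with its volume, so the isoperimetric input is essentially trivial on the $\F$-side and the work reduces to the mass control of $\partial \psi_i$, which is precisely the output of the standard Almgren--Pitts discretization scheme once one localizes to $l_i$-cells.
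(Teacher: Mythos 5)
Your proposal is correct and follows essentially the same route as the paper, which simply cites the Marques--Neves discretization theorems (\cite[Theorem 13.1]{Marques-Neves14}, \cite[Theorem 3.9]{Marques-Neves17}) and their adaptation to $\C(M)$ in \cite[Theorem 5.1]{Zhou17}, the key point in both cases being the identification of $\C(M)$ with $\bI_{n+1}(M,\mZ_2)$ so that $\F=\M$ equals the volume of the symmetric difference and the isoperimetric input on the $\F$-side is trivial. Your derivations of (\rom{4}) via \cite[Lemma 4.1]{Marques-Neves14} and of (\rom{5}) via the bound $\bigl|\int_A h-\int_B h\bigr|\leq c\,\F(A-B)$ match what the paper builds into the statement itself.
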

\begin{proof}
\cite[Theorem 13.1]{Marques-Neves14} and \cite[Theorem 3.9]{Marques-Neves17} proved this result when $\C(M)$ is replaced by $\Z_n(M)$ and $\Z_n(M, \mZ_2)$ respectively. The adaption to $\C(M)$ was done in \cite[Theorem 5.1]{Zhou17} when $X=[0, 1]$ 
and it is the same for general $X$. 
\end{proof}

The purpose of the next theorem is to construct a continuous map in the $\mF$-topology out of a discrete map with small fineness.

\begin{theorem}
\label{T:discrete to continuous}
There exist some positive constants $C_0=C_0(M, m)$ and $\de_0=\de_0(M, m)$ so that if $Y$ is a cubical subcomplex of $I(m, k)$ and 
\[ \phi: Y_0\to \C(M) \]
has $\mf(\phi)<\de_0$, then there exists a map
\[ \Phi: Y\to \C(M) \]
continuous in the $\mF$-topology and satisfying
\begin{itemize}
\item[(\rom{1})] $\Phi(x)=\phi(x)$ for all $x\in Y_0$;
\item[(\rom{2})] if $\al$ is some $j$-cell in $Y$, then $\Phi$ restricted to $\al$ depends only on the values of $\phi$ restricted on the vertices of $\al$;
\item[(\rom{3})] \[ \sup\{\mF(\Phi(x), \Phi(y)):\, x, y \text{ lie in a common cell of } Y\}\leq C_0\mf(\phi). \]
\end{itemize}
\end{theorem}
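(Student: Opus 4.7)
The plan is to extend $\phi$ by induction on the skeleta $Y^{(0)} \subset Y^{(1)} \subset \cdots \subset Y^{(m)} \supset Y$, setting $\Phi|_{Y^{(0)}} = \phi$, and extending from $Y^{(j)}$ to $Y^{(j+1)}$ one $(j{+}1)$-cell at a time via an Almgren-type cone construction, so that the $\mF$-oscillation of $\Phi$ on each $j$-cell is bounded by $C_j\,\mf(\phi)$ for constants $C_j=C_j(M,m)$ depending only on the ambient dimension and the complexity of the cube.

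The essential analytic tool is an isoperimetric lemma for Caccioppoli sets that is the $\C(M)$-analogue of Almgren's isoperimetric inequality for integral currents. Concretely: there exist $\nu_0=\nu_0(M)>0$ and $C=C(M)$ such that whenever $\Om_1,\Om_2\in\C(M)$ satisfy $\F(\Om_1-\Om_2)<\nu_0$, one can find an \emph{interpolating homotopy} $\gamma:[0,1]\to(\C(M),\mF)$ from $\Om_1$ to $\Om_2$ with
\[
\sup_{t\in[0,1]}\mF\bigl(\gamma(t),\Om_1\bigr)\leq C\bigl(\F(\Om_1-\Om_2)+\M(\partial\Om_1-\partial\Om_2)\bigr).
\]
This is the same filling lemma used in \cite[Theorem 3.9]{Marques-Neves17} in the setting of $\Z_n(M,\mZ_2)$; under the identification $\C(M)\cong\bI_{n+1}(M,\mZ_2)$ one obtains the statement above, and it is precisely the ingredient invoked in the interpolation steps of \cite{Zhou-Zhu18}.

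With this lemma in hand, the construction proceeds as follows. On a $1$-cell $[x,y]\in Y(1)$ with endpoints $x,y\in Y_0$, the hypothesis $\mf(\phi)<\de_0\le \nu_0$ allows us to define $\Phi|_{[x,y]}$ as such an interpolating homotopy from $\phi(x)$ to $\phi(y)$, giving an $\mF$-continuous map with oscillation $\leq C_1\mf(\phi)$. Suppose inductively that $\Phi$ has been defined continuously on $Y^{(j)}$ with $\mF$-oscillation $\leq C_j\mf(\phi)$ on every $j$-cell. For a $(j{+}1)$-cell $\sigma$ with chosen vertex $x_0\in\sigma^{(0)}$, identify $\sigma$ with the cone over $\partial\sigma$ with apex $x_0$ and declare $\Phi(x_0):=\phi(x_0)$; extend along each radial segment from $x_0$ to a point $y\in\partial\sigma$ by applying the filling lemma between $\phi(x_0)$ and $\Phi(y)$. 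This requires that $\mF(\phi(x_0),\Phi(y))<\nu_0$ for all $y\in\partial\sigma$, which by the inductive bound holds provided $C_j\mf(\phi)<\nu_0$. Conclusions (i), (ii), and (iii) are then built into the construction; in particular (ii) follows because on each cell the extension uses only the values of $\phi$ on that cell's vertices, and (iii) follows by combining the oscillation estimates for the cone with the isoperimetric bound.

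The main technical point is to propagate the fineness hypothesis through all $m$ levels of induction without exceeding the isoperimetric threshold $\nu_0$. This is resolved by choosing $\de_0(M,m)$ small enough so that the recursive bound $C_{j+1}\le C(M)(1+C_j)$ for $j=0,1,\dots,m-1$ still satisfies $C_j\de_0<\nu_0$ for every $j\leq m$, which is possible since the recursion terminates after finitely many steps depending only on $m$. Setting $C_0(M,m):=C_m$ and $\de_0(M,m)$ as above yields the stated conclusions; this is essentially the proof of \cite[Theorem 14.2]{Marques-Neves14} (and its mod-$2$ version \cite[Theorem 3.10]{Marques-Neves17}) transcribed to $\C(M)$, with the only new input being the fact that the filling lemma admits the $\mF$-controlled form stated above in the Caccioppoli category.
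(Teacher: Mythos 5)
Your proposal is correct in outline but takes a genuinely different route from the paper. You re-derive the interpolation theorem from scratch inside $\C(M)$: induction on skeleta, an isoperimetric filling lemma for Caccioppoli sets, and a cone-type extension cell by cell, i.e.\ you transcribe the proof of \cite[Theorem 14.2]{Marques-Neves14}/\cite[Theorem 3.10]{Marques-Neves17} into the Caccioppoli category. The paper instead treats $\partial:\C(M)\to\Z_n(M,\mZ_2)$ as a double cover: it projects $\phi$ to $\tilde\phi=\partial\circ\phi$, invokes \cite[Theorem 3.10]{Marques-Neves17} verbatim to get a continuous extension $\tilde\Phi$ into $\Z_n(M,\M,\mZ_2)$, and then lifts $\tilde\Phi$ back to $\C(M)$ cell by cell using \cite[Claim 5.2]{Marques-Neves18}, checking via the Constancy Theorem that the unique lift fixing one vertex actually agrees with $\phi$ at all vertices of the cell once $\de_0$ is small. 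The lifting argument is essentially free given the existing literature, whereas your route requires genuinely establishing two things you currently only assert: (a) the filling lemma in $\C(M)$ with control on the \emph{mass} of the boundary difference along the path (note the varifold part of $\mF$ is then controlled via $\mF(|\partial\Om|,|\partial\Om'|)\le\M(\partial\Om-\partial\Om')$, so the lemma should be stated at the level of $\F$ and $\M$, not directly for the varifold metric); and (b) joint continuity of the cone extension in the boundary point and the radial parameter, which does not follow from applying the filling lemma independently along each ray --- one needs the structured, coordinate-by-coordinate interpolation of Almgren/Marques--Neves. Neither is a fatal obstruction, but both are exactly the technical content that the paper's double-cover trick lets one avoid redoing. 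One small bonus of the paper's route worth knowing: it explains why only $\mF$-continuity (not $\M$-continuity) is claimed for $\Phi$, since the mass of $\partial\Phi(x)-\partial\Phi(y)$ in $\Z_n(M)$ need not match that of $\tilde\Phi(x)-\tilde\Phi(y)$ in $\Z_n(M,\mZ_2)$.
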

\begin{proof}
\cite[Theorem 3.10]{Marques-Neves17} proved this result when $\C(M)$ is replaced by $\Z_n(M, \mZ_2)$. We can use the double cover $\partial: \C(M)\to \Z_n(M, \mZ_2)$ (see \cite[Section 5]{Marques-Neves18}) to lift the extension from $\Z_n(M, \mZ_2)$ to $\C(M)$.

Let $C_0=C_0(M, m)$ and $\de_0=\de_0(M)$ be given in \cite[Theorem 3.10]{Marques-Neves17}. Denote $\tilde{\phi}=\partial\circ \phi: Y_0\to \Z_n(M, \mZ_2)$ as the projection of $\phi$ into $\Z_n(M, \mZ_2)$. Then $\f(\tilde{\phi})<\de_0$, so by \cite[Theorem 3.10]{Marques-Neves17}, there exists a map:
\[ \tilde{\Phi}: Y\to \Z_n(M, \M, \mZ_2) \]
continuous in the $\M$-topology and satisfying
\begin{itemize}
\item[(a)] $\tilde{\Phi}(x)=\tilde{\phi}(x)$ for all $x\in Y_0$;
\item[b)] if $\al$ is some $j$-cell in $Y$, then $\tilde{\Phi}$ restricted to $\al$ depends only on the values of $\tilde{\phi}$ restricted on the vertices of $\al$;
\item[(c)] \[ \sup\{\M(\tilde{\Phi}(x), \tilde{\Phi}(y)):\, x, y \text{ lie in a common cell of } Y\}\leq C_0\mf(\phi). \]
\end{itemize}

By \cite[Claim 5.2]{Marques-Neves18}, $\tilde{\Phi}$ can be uniquely lifted to a continuous map $\Phi: Y \to \C(M)$ such that $\partial\circ \Phi=\tilde{\Phi}$ and $\Phi(x)=\phi(x)$ for all $x\in Y_0$.  In fact, given a $j$-cell $\al$ and a fixed vertex $x_0\in \al_0$, there is a unique lift $\Phi: \al \to \C(M)$ such that $\Phi(x_0)=\phi(x_0)$. By the construction in \cite[Claim 5.2]{Marques-Neves18}, $\F(\Phi(x), \Phi(x_0))=\F(\tilde{\Phi}(x), \tilde{\Phi}(x_0))\leq C_0\f(\phi)$ for every $x\in\al$, so we know by the Constancy Theorem that $\Phi(x)=\phi(x)$ for each vertex $x\in \al_0$ when $\de_0$ is small enough. Thus $\Phi$ can be obtained by lifting $\tilde{\Phi}$ in each cell of $Y$. 

Since $\partial\Phi(x)$ and $\tilde{\Phi}(x)$ represent the same varifold, $\Phi$ is continuous in the $\mF$-topology. So we have proved (\rom{1})(\rom{2}).

For (\rom{3}), we have
\[ \mF(\Phi(x), \Phi(y))=\F(\Phi(x), \Phi(y))+\mF(|\partial \Phi(x)|, |\partial \Phi(y)|) \leq 2 C_0 \mf(\phi). \]
\end{proof}
\begin{remark}
Note that in general the mass of $\partial\Phi(x)-\partial\Phi(y)$ as element in $\Z_n(M)$ may not be equal to that of $\tilde{\Phi}(x)-\tilde{\Phi}(y)$, 
so we may not be able to prove the $\M$-continuity for $\Phi$.  
\end{remark}

Following \cite[3.10]{Marques-Neves17}, we call the map $\Phi$ given in Theorem \ref{T:discrete to continuous} the {\em Almgren extension} of $\phi$. We will record a few properties concerning the homotopy equivalence of Almgren's extensions.

Before stating the next result, we first recall the notion of homotopic equivalence between discrete sweepouts. Let $Y$ be a cubical subcomplex of $I(m, k)$. Given two discrete maps $\phi_i: Y(l_i)_0 \to \C(M)$, we say {\em $\phi_1$ is homotopic to $\phi_2$ with fineness less than $\eta$}, if there exist $l\in \N$, $l>l_1, l_2$ and a map
\[ \psi: I(1, k+l)_0\times Y(l)_0 \to \C(M) \]
with fineness $\mf(\psi)<\eta$ and such that 
\[ \psi([i-1], y)=\phi_i(\n(k+l, k+l_i)(y)),\, i=1, 2, y\in Y(l)_0. \]

The following result is analogous to \cite[Proposition 3.11]{Marques-Neves17}. We provide a lightly different proof.

\begin{proposition}
\label{P:Almgren extensions are homotopic}
With $\phi_1, \phi_2$ as above, if $\eta<\de_0(M, m)$ in Theorem \ref{T:discrete to continuous}, then the Almgren extensions 
\[ \Phi_1, \Phi_2: Y \to \C(M)\]
of $\phi_1, \phi_2$, respectively, are homotopic to each other in the $\mF$-topology.
\end{proposition}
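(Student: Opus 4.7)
The plan is to apply the interpolation Theorem \ref{T:discrete to continuous} directly to the discrete homotopy $\psi$ itself, and then reconcile the resulting time-$0$ and time-$1$ slices with $\Phi_1$ and $\Phi_2$.

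First, regard the domain of $\psi$, namely $I(1, k+l)_0 \times Y(l)_0$, as the $0$-skeleton of the cubical complex $I(1, k+l) \times Y(l)$, which is a cubical subcomplex of $I(m+1, k+l)$ of dimension $k+1$. Since $\mf(\psi)<\eta<\de_0(M, m+1)$, Theorem \ref{T:discrete to continuous} produces an $\mF$-continuous Almgren extension
$$\Psi: [0,1]\times Y \to (\C(M), \mF)$$
with $\Psi(t, y) = \psi(t, y)$ for $(t, y)\in I(1, k+l)_0 \times Y(l)_0$. In particular, $\Psi(0,\cdot)$ and $\Psi(1,\cdot)$ are $\mF$-continuous maps $Y\to\C(M)$ whose values on $Y(l)_0$ agree with $\phi_1\circ\n(k+l, k+l_1)$ and $\phi_2\circ\n(k+l, k+l_2)$ respectively.

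It therefore suffices to show that for $i=1, 2$, the Almgren extension $\Phi_i$ of $\phi_i$ on the cubical structure $Y(l_i)$ is $\mF$-homotopic to $\Psi(i-1,\cdot)$, which is itself an Almgren extension on the refined structure $Y(l)$ of the same data propagated by $\n$. We construct this homotopy cell-by-cell on $Y(l_i)$. By property (\rom{2}) of Theorem \ref{T:discrete to continuous}, on each $j$-cell $\al$ of $Y(l_i)$ the map $\Phi_i|_\al$ depends only on the values of $\phi_i$ at the vertices of $\al$, while $\Psi(i-1,\cdot)|_\al$ depends only on $\phi_i\circ\n(k+l, k+l_i)$ at the vertices of the refined cell $\al(l-l_i)$; by property (\rom{3}) both images lie in a common $\mF$-ball of radius $C_0\eta$. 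Proceeding inductively on skeleta: on the $0$-skeleton both maps agree with $\phi_i$ so the homotopy is constant; given a homotopy on the $(j-1)$-skeleton, we extend it across a $j$-cell $\al$ by another application of Theorem \ref{T:discrete to continuous} to a discrete map on $I\times \al$ of fineness controlled by $\eta$, using the inductive hypothesis to prescribe the boundary values. Concatenating the three homotopies $\Phi_1 \simeq \Psi(0,\cdot)\simeq \Psi(1,\cdot) \simeq \Phi_2$ (the middle one being $\Psi$ itself) gives the desired $\mF$-homotopy.

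The principal obstacle is the inductive cell-by-cell step: we must verify that refining the cubical structure and composing with $\n$ yields an Almgren extension canonically homotopic to the original one. This is essentially a re-run of the isoperimetric-filling construction underlying Theorem \ref{T:discrete to continuous} (and of \cite[Theorem 3.10]{Marques-Neves17} lifted through the double cover $\partial: \C(M)\to \Z_n(M, \mZ_2)$ as in the proof above), made possible by the smallness assumption $\eta<\de_0$, which keeps all relevant currents inside the regime where the filling is unique up to homotopy.
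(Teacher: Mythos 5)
Your first step coincides exactly with the paper's: apply Theorem \ref{T:discrete to continuous} to the discrete homotopy $\psi$ to obtain an $\mF$-continuous $\Psi: I\times Y\to \C(M)$ interpolating between the Almgren extensions $\Phi_1',\Phi_2'$ of $\phi_i'(y)=\psi([i-1],y)=\phi_i(\n(k+l,k+l_i)(y))$ on the refined structure $Y(l)$. The divergence, and the problem, is in how you reconcile $\Phi_i'$ with $\Phi_i$. Your skeleton-by-skeleton induction does not go through as written: Theorem \ref{T:discrete to continuous} takes discrete vertex data and produces an extension whose restriction to each face of a cell is \emph{determined by the construction} (property (\rom{2})); it does not accept a prescribed continuous map on $\partial(I\times\al)$ as boundary condition. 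So you cannot glue its output to the homotopy already built on the $(j-1)$-skeleton unless you separately verify that the two agree there, which is not automatic and is precisely the content you defer to the assertion that ``the filling is unique up to homotopy.'' That uniqueness-up-to-homotopy of isoperimetric fillings in the small-fineness regime is exactly the statement that would need to be proved, so the argument is circular at its key point.

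The paper sidesteps all of this with a structural observation: because the Almgren extension on a cell depends only on the vertex values, the extension $\Phi_i'$ of the refined data $\phi_i\circ\n(k+l,k+l_i)$ is \emph{literally} $\Phi_i\circ\n_{\al,\,l-l_i}$ on each cell $\al\in Y(l_i)$, where $\n_{\al,k}$ is the explicit self-map of $\al$ that dilates the center subcell $\al_c$ of $\al(k)$ linearly onto $\al$ and collapses $\al\setminus\al_c$ to $\partial\al$ via nearest-point projection. Since $\n_{\al,k}$ is homotopic to the identity, $\Phi_i'\simeq\Phi_i$ with no further filling argument needed. If you want to salvage your route, you should replace the inductive interpolation by this reparametrization identity (or else actually prove the homotopy-uniqueness of Almgren extensions with matching vertex data, which is a nontrivial lemma in its own right).
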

\begin{proof}
By Theorem \ref{T:discrete to continuous}, the Almgren extension $\Psi: I\times Y\to \C(M)$ of $\psi$ is continuous in $\mF$-topology and is a homotopy between the Almgren extensions $\Phi_1', \Phi_2'$ of $\phi_1', \phi_2': Y(l)_0\to \C(M)$ (given by $\phi_i'(y)=\psi([i-1], y)$). Note that $\Phi_i'$ is just a reparametrization of the Almgren extension $\Phi_i$ of $\phi_i$ for $i=1, 2$ respectively, so $\Phi_i$ is homotopic to $\Phi_i'$ in the $\mF$-topology. Now let us describe the reparametrization map. Given an arbitrary cell $\al$ and $k\in \N$, we take $\al_c$ to be the center cell of $\al(k)$. We can define a map $\n_{\al, k}: \al \to \al$ such that it maps $\al_c$ to $\al$ linearly, and for each $x\in \al\backslash \al_c$, if we denote by $x_c$ the nearest point projection of $x$ to $\partial\al_c$ then $\n_{\al, k}$ maps $x$ to $\n_{\al, k}(x_c)$. This map dilates $\al_c$ to $\al$ and compresses $\al\backslash\al_c$ to the boundary $\partial\al$, and it is homotopic to the identity map. With this notion $\Phi_i' \vert_\al = \Phi_i\vert_\al \circ \n_{\al, l-l_i}$ on each cell $\al\in Y(l_i)$.  
Hence we finish the proof.
\end{proof}

The following result is the counterpart of \cite[Corollary 3.12]{Marques-Neves17}. 
\begin{proposition}
\label{P:discretized sweepouts are homotopic}
Let $\{\phi_i\}_{i\in\N}$ and $\{\psi_i\}_{i\in\N}$ be given by Theorem \ref{T:continuous to discrete} applied to some $\Phi$ therein. Assume that $\Phi$ is continuous in the $\mF$-topology on $X$. Then the Almgren extension $\Phi_i$ is homotopic to $\Phi$ in the $\mF$-topology for sufficiently large $i$. 

In particular, for $i$ large enough, there exist homotopy maps $\Psi_i: [0, 1]\times X\to \C(M)$ continuous in the $\mF$-topology, $\Psi_i(0, \cdot)=\Phi_i$, $\Psi_i(1, \cdot)=\Phi$, and
\[ \limsup_{i\to\infty}  \sup_{t\in[0, 1], x\in X}\mF(\Psi_i(t, x), \Phi(x)) \to 0.  \]
Therefore for given $h\in C^\infty(M)$, we have
\[ \limsup_{i\to\infty} \sup_{x\in X}\Ah(\Phi_i(x))\leq \sup_{x\in X}\Ah(\Phi(x)). \]
\end{proposition}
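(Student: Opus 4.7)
The plan is to compare $\Phi_i$ with the Almgren extension $\bar{\Phi}_i$ of $\Phi\vert_{X(k_i)_0}$, and then invoke Proposition \ref{P:Almgren extensions are homotopic} together with the uniform $\mF$-continuity of $\Phi$.

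First I would define $\bar{\phi}_i\colon X(k_i)_0\to\C(M)$ by $\bar{\phi}_i(x)=\Phi(x)$. Since $X$ is compact and $\Phi$ is $\mF$-continuous, it is uniformly $\mF$-continuous, so $\mf(\bar{\phi}_i)\to 0$ as $i\to\infty$; in particular $\mf(\bar{\phi}_i)<\de_0(M,m)$ for large $i$, and Theorem \ref{T:discrete to continuous} produces the Almgren extension $\bar{\Phi}_i\colon X\to\C(M)$, continuous in the $\mF$-topology. Part (iii) of that theorem combined with uniform $\mF$-continuity of $\Phi$ then gives $\sup_{x\in X}\mF(\bar{\Phi}_i(x),\Phi(x))\to 0$, and an explicit cell-by-cell isoperimetric interpolation provides an $\mF$-homotopy from $\bar{\Phi}_i$ to $\Phi$ (both maps agree at vertices and have $\mF$-variation $O(\mf(\bar{\phi}_i))$ on each cell).

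The task thus reduces to constructing a small-fineness discrete homotopy between $\phi_i$ and $\bar{\phi}_i$, for then Proposition \ref{P:Almgren extensions are homotopic} yields $\Phi_i\sim\bar{\Phi}_i$ in the $\mF$-topology. From Theorem \ref{T:continuous to discrete} (ii) one has $\F(\phi_i(x)-\Phi(x))\le\de_i\to 0$, and from (iii) together with uniform $\mF$-continuity of $\Phi$, $\M(\partial\phi_i(x))\to\M(\partial\Phi(x))$ uniformly in $x$; hence $\mF(\phi_i(x),\bar{\phi}_i(x))\to 0$ uniformly by \cite[Lemma 4.1]{Marques-Neves14}. Using Almgren's isoperimetric choice to connect pairs of $\mF$-close Caccioppoli sets through many small intermediate states, I would build a discrete homotopy $\mu_i\colon I(l_i)_0\times X(k_i)_0\to\C(M)$ from $\phi_i$ to $\bar{\phi}_i$ with $\mf(\mu_i)\to 0$.

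Concatenating the $\mF$-homotopy $\Phi_i\sim\bar{\Phi}_i$ (the Almgren extension of $\mu_i$) with the $\mF$-homotopy $\bar{\Phi}_i\sim\Phi$ yields the desired $\Psi_i$. Uniform closeness $\sup_{t,x}\mF(\Psi_i(t,x),\Phi(x))\to 0$ follows because each stage has $\mF$-variation bounded by $C_0$ times a fineness that tends to $0$. The final $\Ah$ inequality is then a consequence of the $\mF$-continuity of $\Ah$ on $\C(M)$, which follows from $\mF$-continuity of $\M(\partial\cdot)$ on varifolds and smoothness of $h$. The main obstacle is the construction of the small-fineness discrete homotopy from $\phi_i$ to $\bar{\phi}_i$, because $\mF$-closeness of two Caccioppoli sets does not control the mass of the difference of their boundary currents; one must insert many intermediate Caccioppoli sets via the isoperimetric lemma, a technical point already present in the Marques--Neves framework for $\Z_n(M,\mZ_2)$ and requiring adaptation to $\C(M)$ via the double cover $\partial\colon\C(M)\to\Z_n(M,\mZ_2)$.
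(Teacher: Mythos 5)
Your overall architecture is plausible, but the two load-bearing steps are exactly the ones you leave as sketches, and each of them is essentially as hard as the proposition itself. First, the discrete homotopy $\mu_i$ from $\phi_i$ to $\overline{\phi}_i=\Phi\vert_{X(k_i)_0}$ with $\mf(\mu_i)\to 0$: uniform $\mF$-closeness of two discrete maps does not produce such a homotopy without a full interpolation theorem (joining $\mF$-close Caccioppoli sets by a chain whose consecutive boundary currents are $\M$-close and whose masses do not overshoot), which is a substantial piece of the Almgren--Marques--Neves machinery and not a one-line application of the isoperimetric choice. Second, the claimed $\mF$-homotopy from $\overline{\Phi}_i$ to $\Phi$ ``by cell-by-cell isoperimetric interpolation'': agreeing at vertices with small oscillation on cells gives uniform $\mF$-closeness of the two maps, but producing an $\mF$-continuous homotopy between them that stays uniformly $\mF$-close to $\Phi$ is again the same difficulty in disguise (contractibility of $\C(M)$ only yields a homotopy in the flat topology, which is not enough here).

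The paper's proof avoids both constructions by never introducing $\overline{\phi}_i$ at all. Theorem \ref{T:continuous to discrete} already supplies discrete homotopies $\psi_i$ of fineness $<\de_i$ connecting $\phi_{i-1}\circ\n(k_i,k_{i-1})$ to $\phi_i$; their Almgren extensions $\bar\Psi_i$ (amended by the reparametrization maps of Proposition \ref{P:Almgren extensions are homotopic}) are $\mF$-continuous homotopies from $\Phi_{i-1}$ to $\Phi_i$ satisfying, by Theorem \ref{T:continuous to discrete}(\rom{4}) with $Z=X$ and Theorem \ref{T:discrete to continuous}(\rom{3}), the uniform estimate $\sup_{t,x}\mF(\bar\Psi_i(t,x),\Phi(x))\to 0$. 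The homotopy from $\Phi_i$ to $\Phi$ is then the infinite concatenation of all $\bar\Psi_j$, $j\geq i$, parametrized on $[0,\infty]\cong[0,1]$, which converges at the endpoint to $\Phi$ precisely because of that uniform estimate; the $\Ah$ inequality follows as you say. If you want to salvage your route, you must either prove the interpolation lemma you are invoking for $\mu_i$ or, more economically, notice that the homotopies you need are already in hand from the discretization theorem.
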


\begin{proof}
For $i$ large enough such that $\de_i<\de_0$ in Theorem \ref{T:discrete to continuous}, we let $\bar\Psi_i : I\times X \to \C(M)$ be the Almgren extensions of $\psi_i$. By Theorem \ref{T:continuous to discrete}(\rom{4}) (with $Z=X$) and Theorem \ref{T:discrete to continuous}(\rom{3}), we know that 
\begin{equation}
\label{E:mF closeness after discretization and interpolation}
\limsup_{i\to\infty}  \sup_{t\in[0, 1], x\in X}\mF(\bar\Psi_i(t, x), \Phi(x)) \to 0. 
\end{equation}
As in the proof of the above Proposition, we can amend $\bar\Psi_i$ with the reparametrization maps associated with the two pairs $(\Phi_{i-1}', \Phi_{i-1})$ and $(\Phi_i', \Phi_i)$, and abuse the notation and still denote them by $\bar\Psi_i$. Then $\bar\Psi_i$ is a continuous (in the $\mF$-topology) homotopy between $\Phi_{i-1}$ and $\Phi_i$. Note that the reparametrizations are done is small cells with sizes converging to zero, so (\ref{E:mF closeness after discretization and interpolation}) still holds true for the amended maps by Theorem \ref{T:discrete to continuous}(\rom{3}) again.  For given $i$ large enough, to construct the homotopy from $\Phi_i$ to $\Phi$, we can just let $\Psi_i : [0, \infty] \times X\to \C(M)$ be the gluing of all $\{\bar\Psi_j\}_{j\geq i}$. Note that by (\ref{E:mF closeness after discretization and interpolation}),  $\Psi_i(\infty, \cdot)=\Phi$ (we can identify $[0, \infty]$ with $[0 ,1]$ in the definition of $\Psi_i$), and (\ref{E:mF closeness after discretization and interpolation}) holds true with $\bar\Psi_i$ replaced by $\Psi_i$. Hence we finish the proof.
\end{proof}

\subsection{Proof of the min-max Theorem}
\label{SS:proof of min-max theorem}

One key ingredient in the Almgren-Pitts theory to prove regularity of min-max varifold is to introduce the ``almost minimizing" concept. Given $h\in \mathcal S(g)$, we refer to \cite[Section 6]{Zhou-Zhu18} for the detailed notion of $h$-almost minimizing varifold and related properties.  The existence of almost minimizing varifolds follows from a combinatorial argument of Pitts \cite[page 165-page 174]{Pitts81} inspired by early work of Almgren \cite{Almgren65}. Pitts's argument works well in the construction of min-max PMC hypersurfaces; see \cite[Theorem 6.4]{Zhou-Zhu18}. Marques-Neves has generalized Pitts's combinatorial argument to a more general form in \cite[2.12]{Marques-Neves17}, and we can adapt their result to the PMC setting with no change.  We now describe the adaption.

Consider a sequence of cubical subcomplexes $Y_i$ of $I(m, k_i)$ with $k_i\to\infty$, and a sequence $S=\{\varphi_i\}_{i\in\N}$ of maps
\[ \varphi_i: (Y_i)_0\to \C(M) \]
with fineness $\mf(\varphi_i)=\de_i$ converging to zero. Define
\[ \bL^h(S)=\limsup_{i\to\infty} \sup\{ \Ah(\varphi_i(y)): y\in (Y_i)_0 \}, \]
\[ \bK(S)=\{ V=\lim_{j\to\infty}|\partial \varphi_{i_j}(y_j)| \text{ as varifolds}: y_j\in (Y_{i_j})_0 \}, \]
and
\[ \bC(S)=\{V=\lim_{j\to\infty}|\partial \varphi_{i_j}(y_j)| \text{ as varifolds}:\, \text{with }\lim_{j\to\infty}\Ah(\varphi_{i_j}(y_j))=\bL^h(S)\}. \]

We say that an element $V\in \bC(S)$ is {\em $h$-almost minimizing in small annuli with respect to $S$} (c.f. \cite[Definition 6.3]{Zhou-Zhu18}), if for any $p\in M$ and any small enough annulus $A=A_{r_1, r_2}(p)$ centered at $p$ with radii $0<r_1<r_2$, there exist sequences $\{i_j\}_{j\in\N}\subset \{i\}_{i\in\N}$ and $\{y_j: y_j\in (Y_{i_j})_0\}_{j\in\N}$, such that $V=\lim_{j\to\infty}|\partial \varphi_{i_j}(y_j)|$, $\lim_{j\to\infty}\Ah(\varphi_{i_j}(y_j))=\bL^h(S)$, and $\varphi_{i_j}(y_j)\in \sA^h(A; \ep_i, \de_i; \M)$ (see \cite[Definition 6.1]{Zhou-Zhu18}) for some $\ep_i, \de_i\to 0$. The last condition is usually called {\em $(\ep_i, \de_i, h)$-almost minimizing}. Note that by \cite[Proposition 6.5]{Zhou-Zhu18}, $V$ is also $h$-almost minimizing in small annuli in the sense of \cite[Definition 6.3]{Zhou-Zhu18}. 
 
The following is a variant of \cite[Theorem 2.13]{Marques-Neves17} and \cite[Theorem 4.10]{Pitts81}.
\begin{theorem}
\label{T:combinatorial deformation}
If no element $V\in \bC(S)$ is $h$-almost minimizing in small annuli with respect to $S$, then there exists a sequence $\tilde{S}=\{\tilde{\varphi}_i\}$ of maps
\[ \tilde{\varphi}_i: Y_i(l_i)_0\to \C(M), \]
for some $l_i\in \N$, such that:
\begin{itemize}
\item $\tilde{\varphi}_i$ is homotopic to $\varphi_i$  with fineness converging to zero as $i\to\infty$;
\item $\bL^h(\tilde{S})<\bL^h(S)$.
\end{itemize}
\end{theorem}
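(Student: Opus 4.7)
The plan is to adapt Pitts's combinatorial deformation argument, in the multi-parameter form of Marques--Neves \cite[Theorem 2.13]{Marques-Neves17}, from the mass functional on mod-2 cycles to the weighted functional $\Ah$ on Caccioppoli sets. The local input is already available from the PMC theory: by \cite[Definition 6.1]{Zhou-Zhu18}, failure of $(\ep, \de, h)$-almost minimizing at $\Om \in \C(M)$ in an annulus $A$ produces a finite chain $\Om = \Om_0, \Om_1, \dots, \Om_q$ in $\C(M)$ supported in $A$, each consecutive pair having flat plus boundary-mass deviation $<\de$, with $\Ah(\Om_q) < \Ah(\Om) - \ep$.

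First I would extract uniform quantitative data by compactness on $\bC(S)$. Since its elements have uniformly bounded mass, $\bC(S)$ is sequentially $\mF$-compact. By hypothesis, to each $V \in \bC(S)$ I can associate an annulus $A(V)$ at some $p_V \in M$ and constants $\ep(V), \de(V) > 0$ witnessing failure of $h$-almost minimizing. By $\mF$-continuity these witnesses persist on an $\mF$-neighborhood $U(V)$ (with slightly weakened constants), and compactness yields a finite subcover $U(V_1), \dots, U(V_J)$. This produces uniform $\ep_0, \de_0 > 0$ such that for all $i$ large, any vertex $y \in (Y_i)_0$ with $\Ah(\varphi_i(y)) > \bL^h(S) - \ep_0$ admits one of the annuli $A(V_j)$ supporting a deformation of fineness $<\de_0$ that reduces $\Ah$ by at least $\ep_0$.

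Second I would run the Pitts/Marques--Neves cell-by-cell procedure on a sufficiently fine refinement $Y_i(l_i)$. Each cell contains boundedly many vertices of concern, and for each such vertex $y$ I can select a pairwise disjoint sub-annulus of $A(V_{j(y)})$ by shrinking radii within $[r_1(V_j), r_2(V_j)]$. Deformations at distinct vertices of a common cell then commute because their supports are disjoint in $M$, and the chains themselves serve as discrete interpolations along edges; composing these along higher-dimensional faces produces the desired map $\tilde\varphi_i$ on $Y_i(l_i)_0$, together with a homotopy to $\varphi_i$ of fineness vanishing as $\de_0 = \de_0(i) \to 0$. The uniform reduction of step one then gives $\bL^h(\tilde S) \leq \bL^h(S) - \ep_0/2 < \bL^h(S)$.

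The main obstacle is the combinatorial bookkeeping: arranging pairwise disjoint annuli across vertices, edges, and higher-dimensional faces so that local deformations glue into a genuine homotopy of discrete maps whose fineness vanishes with $i$. This is precisely what \cite[Theorem 2.13]{Marques-Neves17} establishes, and the only adaptation required is to check that replacing $\M(\partial\Om)$ by $\Ah(\Om) = \M(\partial\Om) - \int_\Om h$ preserves every step. Since $|\int_\Om h - \int_{\Om'} h| \leq c \cdot \F(\Om - \Om')$ with $c = \sup_M|h|$ and all deformations are supported in small regions with controlled flat deviation, the $\Ah$-change along each chain differs from the $\M(\partial\cdot)$-change by at most $c \cdot \de_0$, so for $\de_0$ small the strict quantitative decrease persists. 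Marques--Neves's argument then applies verbatim to produce $\tilde S$ with the required two properties.
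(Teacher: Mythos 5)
Your overall strategy is the same as the paper's: feed the Zhou--Zhu chains witnessing failure of $(\ep,\de,h)$-almost minimizing into the Pitts/Marques--Neves combinatorial deformation, noting that the $\Ah$-decrease is already built into the definition of the classes $\sA^h$. However, there is a genuine gap in how you set up the input to that combinatorial machine. You extract, for each $V\in\bC(S)$, a \emph{single} annulus $A(V)$, and then propose to obtain the pairwise disjoint supports needed at distinct vertices of a common cell by ``shrinking radii within $[r_1(V_j),r_2(V_j)]$.'' This step fails because the almost-minimizing property is monotone the wrong way for it: if $A'\subset A$, then almost minimizing in $A$ implies almost minimizing in $A'$ (there are fewer competitors supported in $A'$), so failure in $A'$ implies failure in $A$ --- but failure in $A$ gives no decreasing chain supported in a prescribed sub-annulus $A'$. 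A single bad annulus per critical varifold therefore cannot be multiplied into the disjoint family that the cell-by-cell argument requires, and your claim that ``deformations at distinct vertices of a common cell commute because their supports are disjoint'' has no basis as stated.

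The correct setup, which the paper uses, exploits the full strength of the negated hypothesis: ``not $h$-almost minimizing in \emph{small} annuli'' means that at some point $p_V$ the failure occurs in annuli of arbitrarily small outer radius, so one can iterate to produce, at each of finitely many points $p_1,\dots,p_\nu$ (finiteness coming from compactness of $\bC(S)$, as in your first step), a family of $c=(3^m)^{3^m}$ concentric annuli in \emph{each} of which the relevant $\varphi_i(y)$ fail to be $(\ep,\de,h)$-almost minimizing; the number $c$ is dictated by the cubical combinatorics of the $m$-dimensional parameter space, and only with this reservoir of annuli can the construction of Pitts (pages 165--174) be run. Your final paragraph's bookkeeping of the difference between $\M(\partial\cdot)$ and $\Ah$ via $|\int_\Om h-\int_{\Om'}h|\leq c\,\F(\Om-\Om')$ is correct but not actually needed once one works with Zhou--Zhu's $\sA^h$ classes, whose chains decrease $\Ah$ directly.
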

\begin{proof}
By the assumption of the theorem, for each $V\in \bC(S)$, there exists a $p\in M$,  such that for any $\tilde r>0$, there exist $r, s>0$,  with $\tilde r> r+2s >r-2s >0$ and $\ep>0$, such that, 
 if $\Ah(\varphi_i(y))>\bL^h(S)-\ep$ and $\mF(|\partial \varphi_i(y)|, V)<\ep$, then $\varphi_i(y)\notin \sA^h(A_{r-2s, r+2s}(p); \ep, \de; \M)$ for any $\de>0$. As in the proof of \cite[Theorem 4.10]{Pitts81}, we denote $c=(3^m)^{3^m}$. By the compactness of $\bC(S)$, we can find a uniform $\ep>0$ and $I\in\N$, and finitely many points $p_1, \cdots, p_\nu \in M$, and for each $p_j$, we can find $c$ concentric annuli $A_{j, 1}\supset\supset \cdots \supset\supset A_{j, c}$ (centered at $p_j$), such that, if $\Ah(\varphi_i(y))>\bL^h(S)-\ep$ and $i>I$,  then there exists some $j\in \{1, \cdots, \nu\}$, so that $\varphi_i(y)\notin \sA^h(A_{j, a}; \ep, \de; \M)$ for all $a\in \{1, \cdots, c\}$ and for any $\de>0$. From here the construction in \cite[Page 165-174]{Pitts81} can be applied to $S$ so as to produce the desired $\tilde S$. 
\end{proof}

\vspace{1em}
Now we are ready to prove Theorem \ref{T:main min-max theorem} following closely that of \cite[Theorem 3.8]{Marques-Neves16}. The only additional thing is to keep track of the volume term $\int_\Om h d\mH^{n+1}$ in $\Ah(\Om)$ and the values of maps assumed on $Z$.

\begin{proof}[Proof of Theorem \ref{T:main min-max theorem}]
Let $\{\Phi_i\}_{i\in\N}$ be a pulled-tight min-max sequence for $\Pi$.  Given $\Phi_i : X\to\C(M)$, it has no concentration of mass as it is continuous in the $\mF$-topology, so applying Theorem \ref{T:continuous to discrete} gives a sequence of maps:
\[ \phi_i^j : X(k_i^j)_0\to \C(M), \]
with $k_i^j<k_i^{j+1}$ and a sequence of positive $\{\de_i^j\}_{j\in\N}\to 0$, satisfying (\rom{1})$\cdots$(\rom{5}) in Theorem \ref{T:continuous to discrete}.

As $\Phi_i$ is continuous in the $\mF$-topology, by the same reasoning as Theorem \ref{T:continuous to discrete}(\rom{3})(\rom{4}), we further have that for every $x\in X(k_i^j)_0$,
\[ \M(\partial\phi_i^j(x))\leq \M(\partial \Phi_i(x))+\eta_i^j \]
with $\eta_i^j\to 0$ as $j\to\infty$, and 
\[ \sup\{\mF(\phi_i^j(x), \Phi_i(x)): x\in X(k_i^j)_0\}\to 0, \text{ as } j\to\infty. \]

Now choose $j(i)\to\infty$ as $i\to\infty$, such that $\varphi_i=\phi_i^{j(i)}: X(k_i^{j(i)})_0\to \C(M)$ satisfies:
\begin{itemize}
\item $\sup\{\mF(\varphi_i(x), \Phi_i(x)): x\in X(k_i^{j(i)})_0\}\leq a_i$ with $a_i\to 0$ as $i\to\infty$;
\item $\sup\{\mF(\Phi_i(x), \Phi_i(y)): x, y \in \al, \al\in X(k_i^{j(i)})\}\leq a_i$;
\item the fineness $\mf(\varphi_i)\to 0$ as $i\to\infty$;
\item the Almgren extensions $\Phi_i^{j(i)}: X\to \C(M)$ is homotopic to $\Phi_i$ in the $\mF$-topology with homotopy maps $\Psi_i^{j(i)}$, and
\[ \limsup_{i\to\infty} \sup\{\mF(\Psi_i^{j(i)}(t, x), \Phi_i(x)): t\in [0, 1], x\in X\}=0, \]
and
\[ \limsup_{i\to\infty} \sup_{x\in X}\Ah(\Phi_i^{j(i)}(x))\leq \limsup_{i\to\infty}\sup_{x\in X}\Ah(\Phi_i(x))=\bL^h(\Pi), \]
by Proposition \ref{P:discretized sweepouts are homotopic}.
\end{itemize}

Therefore, if $S=\{\varphi_i\}$, then $\bL^h(S)=\bL^h(\{\Phi_i\})$ and $\bC(S)=\bC(\{\Phi_i\})$.  
By Theorem \ref{T:combinatorial deformation}, if no element $V\in \bC(S)$ is $h$-almost minimizing in small annuli with respect to $S$, we can find a sequence $\tilde{S}=\{\tilde{\varphi}_i\}$ of maps:
\[ \tilde{\varphi}_i: X(k_i^{j(i)}+l_i)_0\to \C(M) \]
such that
\begin{itemize}
\item $\tilde{\varphi}_i$ is homotopic to $\varphi_i$  with fineness converging to zero as $i\to\infty$;
\item $\bL^h(\tilde{S})<\bL^h(S)$.
\end{itemize}

By Proposition \ref{P:Almgren extensions are homotopic}, the Almgren extensions of $\varphi_i, \tilde{\varphi}_i$:
\[ \Phi_i^{j(i)}, \tilde{\Phi}_i: X\to \C(M), \]
respectively, are homotopic to each other in the $\mF$-topology for $i$ large enough, so $\tilde{\Phi}_i$ is homotopic to $\Phi_i$ in the $\mF$-topology. 

By assumption (\ref{E:nontrivial assumption}) and (\ref{E:boundary requirement}), for $i$ large enough, $\tilde{\varphi}_i$ is the identical to $\varphi_i \circ \n(k_i^{j(i)}+l_i, k_i^{j(i)})$ near $Z(k_i^{j(i)}+l_i)_0$; indeed, the deformation process in Theorem \ref{T:combinatorial deformation} was only made to those $\varphi_i(x)$ with $\Ah(\varphi_i(x))$ close to $\bL^h(S)$. 
Therefore the homotopy maps $\tilde\Psi_i$ between $\Phi_i^{j(i)}$ and $\tilde\Phi_i$ produced by Proposition \ref{P:Almgren extensions are homotopic} when restricted to $Z$ are just the reparametrization maps described therein. Hence
\[ \limsup_{i\to\infty}\sup\{\mF(\tilde\Psi_i(t, x), \Phi_i^{j(i)}(x)): t\in [0, 1], x\in Z\}=0. \] 
Therefore $\{\tilde{\Phi}_i\}_{i\in\N} \in \Pi$.  However, by Theorem \ref{T:discrete to continuous}
\[ \limsup_{i\to\infty} \sup\{\Ah(\tilde{\Phi}_i(x)): x\in X\}\leq \bL^h(\tilde{S})<\bL^h(S)=\bL^h(\Pi). \]
This is a contradiction. So some $V\in \bC(S)=\bC(\{\Phi_i\})$ is $h$-almost minimizing in small annuli with respect to $S$, and hence is $h$-almost minimizing in small annuli in the sense of \cite[Definition 6.3]{Zhou-Zhu18}. 

To finish the proof, we need to show that $V$ has $c$-bounded first variation, and then \cite[Theorem 7.1 and Proposition 7.3]{Zhou-Zhu18} give the regularity of $V$ and the existence of $\Om$. Indeed, by Definition \ref{D:pulled-tight}, $V$ either has $c$-bounded first variation or belongs to $|\partial \Phi_0|(Z)$. Being $h$-almost minimizing in small annuli implies that $V$ has $c$-bounded first variation away from finitely many points by \cite[Lemma 6.2]{Zhou-Zhu18}. If $V\in |\partial \Phi_0|(Z)$, then the proof of \cite[Theorem 4.1]{Harvey-Lawson75} implies that $\|V\|$ has at most $r^{n-\frac{1}{2}}$-volume growth near these bad points, so the first variation extends across these points, and hence $V$ has $c$-bounded first variation in $M$. 
(Note that even if $V\in |\partial\Phi_0|(Z)$, the associated $\Om\notin \Phi_0(Z)$, as $\Om$ may be equal to $M\backslash\Phi_0(z)$ for some $z\in Z$.) So we finish the proof. 
\end{proof}


\section{Compactness of PMC hypersurfaces with bounded Morse index}
\label{S:compactness}

Now we present an adaption of Sharp's compactness theorem \cite{Sharp17} (for minimal hypersurfaces) to the PMC setting and necessary modifications of the proof. Given a closed Riemannian manifold $(M^{n+1}, g)$ and $h\in \mathcal S(g)$, denote by $\PMC^h$ the class of smooth, closed, almost embedded hypersurfaces $\Si\subset M$, such that $\Si$ is represented as the boundary of some open subset $\Om\subset M$ (in the sense of current), and the mean curvature of $\Si$ with respect to the outer normal of $\Om$ is prescribed by $h$, i.e.
\[ H_\Si=h|_\Si. \]
In the following we will sometime abuse the notation and identify $\Si$ with $\Om$. 

Note that when $h\in \mathcal S (g)$, the min-max PMC hypersurfaces produced in Theorem \ref{T:main min-max theorem} satisfy the above requirements.  Indeed, such $\Si=\partial\Om$ is a critical point of the weighted $\Ah$ functional (\ref{E: Ah}):
\[\Ah(\Om)=\Area(\Si)-\int_\Om h\, d\mH^{n+1}.\]
The second variation formula for $\Ah$ along normal vector field $X=\varphi\nu\in\X(M)$ is given by
\[ \de^2\Ah|_\Om (X, X)=\int_\Si(|\nabla \varphi|^2-(Ric^M(\nu, \nu)+|A^\Si|^2+\partial_\nu h)\varphi^2)d\mu_\Si. \]
The classical Morse index for $\Si$ is defined as the number of negative eigenvalues of the the above quadratic form. However, since we will deal with hypersurfaces with self-touching, a weaker version of index is needed. We adopt a concept used by Marques-Neves \cite[Definition 4.1]{Marques-Neves16}. As we will see, this weaker index works well for proving both compactness theory and Morse index upper bound. 
 
\begin{definition}
\label{D:k-unstable}
Given $\Si\in \PMC^h$ with $\Si=\partial\Om$, $k\in\N$ and $\ep\geq 0$, we say that $\Si$ is {\em $k$-unstable in an $\ep$-neighborhood} if there exists $0<c_0<1$ an a smooth family $\{F_v\}_{v\in \overline{B}^k}\subset \Diff(M)$ with $F_0=\Id$, $F_{-v}=F_v^{-1}$ for all $v\in\overline{B}^k$ (the standard $k$-dimensional ball in $\R^k$) such that, for any $\Om'\in \overline{\bB}^{\mF}_{2\ep}(\Om)$, the smooth function:
\[ \Ah_{\Om'}: \overline{B}^k \to [0, \infty),\quad \Ah_{\Om'}(v)=\Ah(F_v(\Om'))\]
satisfies:
\begin{itemize}
\item $\Ah_{\Om'}$ has a unique maximum at $m(\Om')\in B^k_{c_0/\sqrt{10}}(0)$;
\item $-\frac{1}{c_0}\Id\leq D^2\Ah_{\Om'}(u)\leq -c_0\Id$ for all $u\in \overline{B}^k$.
\end{itemize}
Since $\Si$ is a critical point of $\Ah$, necessarily $m(\Om)=0$.
\end{definition}
\begin{remark}
If a sequence $\Om_i$ converges to $\Om$ in the $\mF$-topology, then $\Ah_{\Om_i}$ tends to $\Ah_\Om$ in the smooth topology. Thus if a $\Si\in\PMC^h$ is $k$-unstable in a $0$-neighborhood, then it is $k$-unstable in an $\ep$-neighborhood for some $\ep>0$.
\end{remark}

\begin{definition}
\label{D:weak index}
Given a $\Si\in \PMC^h$ and $k\in\N$, we say that {\em its Morse index is bounded (from above) by $k$}, denoted as
\[  \ind(\Si)\leq k,\] 
if it is not $j$-unstable in $0$-neighborhood for any $j\geq k+1$.

All the above concepts can be localized to an open subset $U\subset M$ by using $\Diff(U)$ in place of $\Diff(M)$. If $\Si$ has index equal to 0 in $U$, we say $\Si$ is {\em weakly stable} in $U$. 
\end{definition}

\begin{proposition}
\label{P:equivalence of k-unstable}
If $\Si\in\PMC^h$ is smoothly embedded with no self-touching, then $\Si$ is $k$-unstable (in $0$-neighborhood) if and only if its classical Morse index is $\geq k$.
\end{proposition}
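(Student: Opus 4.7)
The plan is to show the two directions separately by relating the Hessian of the auxiliary function $\Ah_\Om$ appearing in Definition \ref{D:k-unstable} to the Jacobi quadratic form $\Rom{2}_\Si$ given by the second variation formula (\ref{E: 2nd variation for Ah}).

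\textbf{From classical index $\geq k$ to $k$-unstable.} Assume the classical Morse index of $\Si$ is at least $k$. By spectral theory of the Jacobi operator $L_\Si = \Delta_\Si + (\Ric^M(\nu,\nu) + |A^\Si|^2 + \partial_\nu h)$, there exist smooth functions $\varphi_1,\dots,\varphi_k \in C^\infty(\Si)$ (for instance, eigenfunctions associated to the first $k$ eigenvalues, which are negative) on which $\Rom{2}_\Si$ is negative definite. Since $\Si$ is smoothly embedded with no self-touching, a tubular neighborhood is available, and each $\varphi_i\nu$ may be extended to a smooth compactly supported vector field $X_i \in \X(M)$ whose restriction to $\Si$ has normal component $\varphi_i$. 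For a small $t>0$ to be fixed later, define $F_v$ as the time-$1$ flow of the vector field $t\sum_{i=1}^k v_i X_i$; then $F_0 = \Id$, $F_{-v} = F_v^{-1}$, and $v\mapsto F_v$ is a smooth family in $\Diff(M)$. Using the first variation formula (\ref{E: 1st variation for Ah}) together with the fact that $\Si$ has prescribed mean curvature $h$, the differential $D\Ah_\Om(0)$ vanishes, and a direct computation using (\ref{E: 2nd variation for Ah}) gives
\[ D^2\Ah_\Om(0)(e_i,e_j) = t^2 \Rom{2}_\Si(\varphi_i,\varphi_j). \]
Since the right-hand matrix is negative definite, by rescaling $t$ we may arrange that $-\tfrac{1}{c_0}\Id \le D^2\Ah_\Om(0) \le -c_0\Id$ for a suitable $0<c_0<1$. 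The map $\Ah_\Om$ thus has a strict local maximum at $0$, and by smoothness of $(v,\Om')\mapsto \Ah_{\Om'}(v)$ together with continuity under the $\mF$-topology, the Hessian bound and the existence of a unique maximum $m(\Om')$ close to $0$ persist on all $\Om'\in \overline{\bB}_{2\ep}^\mF(\Om)$ for $\ep>0$ sufficiently small. This realizes $\Si$ as $k$-unstable in an $\ep$-neighborhood, and in particular in the $0$-neighborhood sense.

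\textbf{From $k$-unstable to classical index $\geq k$.} Conversely, suppose $\{F_v\}_{v\in \overline{B}^k}$ is a family witnessing that $\Si$ is $k$-unstable in a $0$-neighborhood. Let $X_i := \partial_{v_i} F_v \big|_{v=0} \in \X(M)$ and write $\varphi_i := \langle X_i, \nu\rangle \in C^\infty(\Si)$. Because $F_0 = \Id$ and $\Si$ is a critical point of $\Ah$, the first variation vanishes, so the second variation of $\Ah$ along a variation vector field depends only on its normal component along $\Si$, as in (\ref{E: 2nd variation for Ah}). Evaluating the Hessian of $\Ah_\Om$ at $0$ along $(e_i, e_j)$ therefore gives
\[ D^2\Ah_\Om(0)(e_i,e_j) = \Rom{2}_\Si(\varphi_i,\varphi_j). \]
Since $D^2\Ah_\Om(0)\le -c_0 \Id$, the matrix $\bigl(\Rom{2}_\Si(\varphi_i,\varphi_j)\bigr)_{ij}$ is negative definite, which forces the $\varphi_i$ to be linearly independent in $C^\infty(\Si)$: any relation $\sum c_i\varphi_i = 0$ would yield $\Rom{2}_\Si\bigl(\sum c_i\varphi_i,\sum c_i\varphi_i\bigr) = 0$ while simultaneously $\leq -c_0 |c|^2$. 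Hence $\Rom{2}_\Si$ is negative definite on the $k$-dimensional subspace $\mathrm{span}\{\varphi_1,\dots,\varphi_k\}\subset C^\infty(\Si)$, which by the min-max characterization of eigenvalues of $L_\Si$ shows that the classical Morse index of $\Si$ is at least $k$.

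\textbf{Main obstacle.} The substantive content is purely the identification of $D^2\Ah_\Om(0)$ with the Jacobi quadratic form evaluated on normal components, which rests on the fact that the second variation at a critical point depends only on the normal part of the variation; this is precisely the formula (\ref{E: 2nd variation for Ah}) already recorded. The only mild technical points are (i) extending the eigenfunctions $\varphi_i$ to ambient vector fields $X_i$, which uses smooth embeddedness of $\Si$ to have a tubular neighborhood available, and (ii) the persistence of the Hessian bounds and uniqueness of the maximum of $\Ah_{\Om'}$ as $\Om'$ varies in a small $\mF$-ball, which follows from the smooth dependence of $(v,\Om')\mapsto \Ah_{\Om'}(v)$ and an implicit function argument.
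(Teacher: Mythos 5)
Your proposal is correct and follows essentially the same route as the paper, which simply defers to \cite[Proposition 4.3]{Marques-Neves16}: both directions reduce to the identity $D^2\Ah_\Om(0)(e_i,e_j)=\de^2\Ah|_\Om(X_i,X_j)=\Rom{2}_\Si(\varphi_i,\varphi_j)$, valid because $\Om$ is a critical point so the second variation depends only on the normal components $\varphi_i$.

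One point to tighten in the forward direction: Definition \ref{D:k-unstable} requires $-\tfrac{1}{c_0}\Id\leq D^2\Ah_{\Om}(u)\leq -c_0\Id$ for \emph{all} $u\in\overline{B}^k$ (and a unique maximum on the whole ball), whereas you only verify the Hessian bound at $u=0$. This is not a fatal gap because your construction already carries the small parameter $t$: writing $F_v=\phi^{tV_v}_1$ with $V_v=\sum v_iX_i$, the smooth function $(t,u)\mapsto \Ah(F_u(\Om))$ satisfies $D^2_u\Ah_\Om(u)=t^2\bigl(\Rom{2}_\Si(\varphi_i,\varphi_j)\bigr)+o(t^2)$ uniformly over $u\in\overline{B}^k$ by Taylor expansion in $t$, so for $t$ small the two-sided Hessian bound holds everywhere on the ball, and strict concavity then forces the unique maximum at $m(\Om)=0$. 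You should say this explicitly rather than asserting only a "strict local maximum at $0$". The rest (the $F_{-v}=F_v^{-1}$ property of autonomous flows, linear independence of the $\varphi_i$ from negative definiteness, and the min-max characterization in the reverse direction) is fine.
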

\begin{proof}
The proof is the same as \cite[Proposition 4.3]{Marques-Neves16}.
\end{proof}

We have the following curvature estimates as a variant of \cite[Theorem 3.6]{Zhou-Zhu18} (with relatively weaker stability assumptions).
\begin{theorem}[Curvature estimates for weakly stable PMC]
\label{T:Curvature estimates for weakly stable PMC}
Let $3\leq (n+1)\leq 7$, and $U\subset M$ be an open subset. Let $\Si\in\PMC^h$ be weakly stable in $U$ with $\Area(\Si)\leq C$, then there exists $C_1$ depending only on $n, M, \|h\|_{C^3}, C$, such that
\[ |A^\Si|^2(x)\leq \frac{C_1}{\dist^2_M(x, \partial U)} \quad \text{ for all } x\in\Si. \] 
\end{theorem}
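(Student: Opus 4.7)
The plan is to deduce the estimate from the stable case already treated in \cite[Theorem 3.6]{Zhou-Zhu18}. The only new content is a reduction from the weak notion of stability (through ambient diffeomorphisms, as in Definitions \ref{D:k-unstable}–\ref{D:weak index}) to the classical pointwise stability inequality
\[
\Rom{2}_\Si(\varphi,\varphi) \;=\; \int_\Si \!\bigl(|\nabla\varphi|^2 - (\Ric^M(\nu,\nu)+|A^\Si|^2+\partial_\nu h)\varphi^2\bigr)\,d\mu_\Si \;\geq\; 0
\]
for all normal variations $\varphi\in C^\infty_c(\Si\cap U)$ respecting the sheet decomposition at self-touchings. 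Once this is in hand, \cite[Theorem 3.6]{Zhou-Zhu18} applies verbatim and yields $|A^\Si|(x)\leq C_1/\dist_M(x,\partial U)$ with the stated dependence of $C_1$.

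First I would promote normal variations to ambient variations. Given $\varphi\in C^\infty_c(\Si\cap U)$, extend $X=\varphi\nu$ to a compactly supported $\tilde X\in\X(U)$ on a tubular neighborhood (sheet-wise at self-touching points, which is legitimate since by \cite[Theorem 3.11]{Zhou-Zhu18} the touching set is $(n-1)$-rectifiable and hence $\mathcal H^n$-negligible in $\Si$, so $\varphi$ can be approximated by test functions supported away from it). Let $F_t=\exp(t\tilde X)\in \Diff(U)$ and consider $\Ah(F_t(\Om))$. If $\Si$ were $1$-unstable in any $\ep$-neighborhood with respect to the path $v\mapsto F_{\la v}$ for some fixed $\la>0$, Definition \ref{D:k-unstable} would force $\tfrac{d^2}{dt^2}\big|_{t=0}\Ah(F_t(\Om))<0$; this contradicts weak stability. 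Using (\ref{E: 2nd variation for Ah}) (which applies since $\tilde X|_\Si$ is normal) the inequality $\tfrac{d^2}{dt^2}\big|_{t=0}\Ah(F_t(\Om))\geq 0$ is exactly $\Rom{2}_\Si(\varphi,\varphi)\geq 0$.

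Second, with the classical stability inequality in hand, I would invoke \cite[Theorem 3.6]{Zhou-Zhu18}. Its proof is a Schoen–Simon argument adapted to the PMC setting: a logarithmic cutoff in the stability inequality, combined with the PMC equation $H=h|_\Si$ and the area bound $\Area(\Si)\leq C$, yields an $L^p$ total-curvature estimate, from which a blow-up/contradiction argument produces the pointwise bound. The constant depends only on $n$, on the geometry of $M$ (sectional curvature and injectivity radius), on $\|h\|_{C^3}$ (through the lower-order terms $\partial_\nu h$ and its derivatives that appear upon differentiating the stability inequality), and on the area bound $C$ (which enters through the cutoff estimate).

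The main obstacle is the self-touching set. At a touching point the two local sheets share a point but have opposite unit normals, so a single ambient vector field acts on both sheets at once and the reduction in Step 1 is not immediate. The resolution is that the touching set is $(n-1)$-rectifiable, so given $\varphi\in C^\infty_c(\Si\cap U)$ we can find $\varphi_k\to\varphi$ in $W^{1,2}(\Si)$ supported away from the touching set, extend each $\varphi_k\nu$ as an ambient vector field in a two-sided tubular neighborhood of the embedded portion, apply the reduction, and pass to the limit in $\Rom{2}_\Si$. This allows the classical stability inequality to be tested against the full class of test functions required by \cite[Theorem 3.6]{Zhou-Zhu18}, closing the argument.
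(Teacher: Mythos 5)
Your first step --- upgrading the weak (ambient-diffeomorphism) notion of stability to the classical second-variation inequality for all normal test functions, so that \cite[Theorem 3.6]{Zhou-Zhu18} applies verbatim --- has a genuine gap, and it is precisely the gap the paper's proof is designed to avoid. The touching set of an almost embedded PMC is $(n-1)$-rectifiable, i.e.\ of codimension \emph{one} inside the $n$-dimensional hypersurface $\Si$. Codimension-one sets have positive $W^{1,2}$-capacity (think of a hyperplane piece in $\R^n$), so functions supported away from the touching set are not dense in $W^{1,2}(\Si)$, and your approximation $\varphi_k\to\varphi$ in $W^{1,2}$ cannot be carried out for a general $\varphi\in C^\infty_c(\Si\cap U)$; being $\mH^n$-negligible is not enough. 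The same obstruction appears geometrically: near a touching point the two sheets are arbitrarily close with opposite outer normals, so any single ambient $X\in\X(U)$ forces the induced normal speeds on the two sheets to be negatives of each other there, and there is no two-sided tubular neighborhood of one sheet disjoint from the other in which to extend $\varphi\nu$ sheet-wise. This is why the theorem is announced as a variant of \cite[Theorem 3.6]{Zhou-Zhu18} ``with relatively weaker stability assumptions'' rather than as a corollary of it.

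The paper's proof sidesteps the reduction entirely. It retains only the inequality $\frac{d^2}{dt^2}\big\vert_{t=0}\Ah(\phi^X_t(\Om))\geq 0$ for ambient $X\in\X(U)$ (your derivation of this from Definition \ref{D:weak index} is fine) and runs the blowup/contradiction argument directly with that hypothesis: if the estimate fails along a sequence, rescaling produces a locally $C^{3,\al}$-graphical limit which is a non-flat minimal hypersurface of polynomial volume growth in $\R^{n+1}$; by the maximum principle the limit is \emph{embedded}, so on the limit every compactly supported normal variation does extend to an ambient vector field, the ambient second-variation inequality (which is preserved under the local $C^{3,\al}$ convergence) becomes classical stability, and the Bernstein theorem gives the contradiction. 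In short, the two notions of stability are reconciled only on the blowup limit, where the self-touching has disappeared, not on $\Si$ itself. To salvage your route you would need a different mechanism for the classical sheet-wise stability inequality, or you should restructure the argument as the paper does.
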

\begin{proof}
The curvature estimates follow from standard blowup arguments together with the Bernstein Theorem \cite[Theorem 2]{Schoen-Simon-Yau75} and \cite[Theorem 3]{Schoen-Simon81}. In particular, being weakly stable in $U$ means that for any ambient vector field $X\in\X(U)$ which generates the flow $\phi^X_t$, we have
\begin{equation}
\label{E:proof of curv est1}
\frac{d^2}{dt^2}\Big\vert_{t=0}\Ah(\phi^X_t(\Om))\geq 0. 
\end{equation}
Assume the conclusion were false, then there exists a sequence of weakly stable hypesurfaces $\{\Si_i\}_{i\in\N}$ with prescribing functions $\{h_i\}_{i\in\N}$ satisfying uniform bounds, but $\sup_U \dist^2_M(\cdot, \partial U) |A^{\Si_i}|^2(\cdot)\to \infty$. By the standard blowup process (c.f. \cite{White87}), one can take a sequence of rescalings of $\Si_i$ which converges locally in $C^{3, \al}$ and graphically to a non-flat minimal hypersurface $\Si_\infty$ in $\R^{n+1}$. Note that the rescalings of $\{h_i\}$ converges to 0 locally uniformly in $C^3$. By the almost embedded assumption and the maximum principle for minimal hypersurfaces (\cite{Colding-Minicozzi11}), $\Si_\infty$ is embedded and hence is 2-sided. By the classical monotonicity formula and area upper bound assumption on $\{\Si_i\}$, $\Si_\infty$ has polynomial volume growth. The key observation is that (\ref{E:proof of curv est1}) is preserved under locally $C^{3, \al}$ convergence, and hence $\Si_\infty$ is a stable minimal hypersurface. Therefore it has to be flat by the Bernstein Theorem, but this is a contradiction.
\end{proof}

Given $h\in \mS(g)$, $0<\La\in\R$ and $I\in\N$, let
\[ \PMC^h(\La, I):=\{\Si\in\PMC^h: \Area(\Si)\leq \La, \ind(\Si)\leq I\}. \]

\begin{theorem}[Compactness for PMC's with bounded index]
\label{T:compactness with bounded index}
Let $(M^{n+1}, g)$ be a closed Riemannian manifold of dimension $3\leq (n+1)\leq 7$. Assume that $\{h_k\}_{k\in\N}$ is a sequence of smooth functions in $\mathcal{S}(g)$ such that $\lim_{k\to\infty}h_k=h_\infty$ in smooth topology. Let $\{\Si_k\}_{k\in\N}$ be a sequence of hypersurfaces such that $\Si_k\in \PMC^{h_k}(\La, I)$ for some fixed $\La>0$ and $I\in\N$. Then,
\begin{enumerate}[label=(\roman*)]
\item Up to a subsequence, there exists a smooth, closed, almost embedded hypesurface $\Si_\infty$ with prescribed mean curvature $h_\infty$, 
such that $\Si_k\to \Si_\infty$ (possibly with integer multiplicity) in the varifold sense, and hence also in the Hausdorff distance by monotonicity formula.

\item There exists a finite set of points $\mathcal Y\subset M$ with $\#\mathcal Y\leq I$, such that the convergence of $\Si_k\to \Si_\infty$ is locally smooth and graphical on $\Si_\infty\setminus \mathcal Y$.  

\item If $h_\infty\in \mathcal{S}(g)$, then the multiplicity of $\Si_\infty$ is 1, and $\Si_\infty\in\PMC^{h_\infty}(\La, I)$.

\item Assuming $\Si_k\neq \Si_\infty$ eventually and $h_k=h_\infty=h\in\mathcal S(g)$ for all $k$ such that every $\Si\in \PMC^{h}$ is properly embedded with no self-touching, then $\mathcal Y=\emptyset$, and the nullity of $\Si_\infty$ with respect to $\de^2\Ah$ is $\geq 1$.

\item If $h_\infty\equiv 0$, then the classical Morse index of $\Si_\infty$ satisfies $\ind(\Si_\infty)\leq I$ (without counting multiplicity).
\end{enumerate}
\end{theorem}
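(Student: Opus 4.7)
The plan is to adapt Sharp's compactness theorem for minimal hypersurfaces \cite{Sharp17} to the PMC setting, using the curvature estimate of Theorem \ref{T:Curvature estimates for weakly stable PMC} in place of the classical stable minimal surface estimate, and the diffeomorphism-based notion of $k$-unstable from Definition \ref{D:k-unstable} in place of the classical Morse index. The uniform area bound $\Area(\Si_k)\leq \La$ together with a uniform $C^0$ bound on $h_k$ gives a uniform first-variation bound on $|\Si_k|$, so after passing to a subsequence, $\Om_k\to\Om_\infty$ in the $\mF$-topology for some $\Om_\infty\in\C(M)$, and $|\Si_k|\to V_\infty$ as varifolds, with $V_\infty$ having generalized mean curvature $h_\infty$. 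The remainder of the proof identifies $V_\infty$ with a smooth almost embedded PMC hypersurface $\Si_\infty$ for $h_\infty$ and extracts the index information.

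For (i) and (ii) I would define the bad set $\mathcal Y\subset M$ as the set of points $p$ around which, for every sufficiently small $r>0$, $\Si_k$ fails to be weakly stable in $B_r(p)$ for infinitely many $k$. Outside $\mathcal Y$, Theorem \ref{T:Curvature estimates for weakly stable PMC} yields uniform $|A|$ bounds and hence locally smooth graphical subsequential convergence. To bound $\#\mathcal Y\leq I$, suppose distinct points $p_1,\dots,p_{I+1}\in\mathcal Y$ existed; then I would choose pairwise disjoint balls $B_{r_j}(p_j)$ and, on each, a compactly supported vector field $X_j$ with $\delta^2\Ah|_{\Om_k}(X_j,X_j)<0$ for infinitely many $k$. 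Because the supports are disjoint, the joint Hessian of the $(I+1)$-parameter family of diffeomorphisms generated by the $X_j$ is block-diagonal and negative definite, realizing $(I+1)$-instability of $\Si_k$ in the sense of Definition \ref{D:k-unstable}, contradicting $\ind(\Si_k)\leq I$. A standard removable-singularity argument (monotonicity gives $\|V_\infty\|(B_\rho(p))\leq C\rho^n$ near each $p\in\mathcal Y$, and an $\epsilon$-regularity theorem for integer varifolds with bounded mean curvature extends the PMC structure across $p$) then yields $V_\infty=m\cdot |\Si_\infty|$ for some almost embedded $\Si_\infty\in\PMC^{h_\infty}$, with locally constant integer multiplicity $m\geq 1$.

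For (iii), assume $h_\infty\in\mathcal S(g)$. The $\mF$-limit $\Om_\infty\in\C(M)$ satisfies $\partial\Om_\infty=\Si_\infty$ with current-multiplicity one. If the varifold multiplicity $m\geq 2$ along some component of $\Si_\infty$, I would examine the $m$ sheets of $\Si_k$ above a regular piece: consecutive sheets have opposite orientations relative to $\Om_k$ (forced by $\Om_k$ being a well-defined Caccioppoli set), so for even $m$ the limit PMC equations imply both $H_{\Si_\infty}=h_\infty|_{\Si_\infty}$ and $H_{\Si_\infty}=-h_\infty|_{\Si_\infty}$ on that component, forcing $h_\infty\equiv 0$ there, contradicting \cite[Theorem 3.11]{Zhou-Zhu18} (no component of an almost embedded PMC hypersurface with $h\in\mathcal S$ is minimal). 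For odd $m\geq 3$, two sheets of the same orientation yield, upon renormalized subtraction $\varphi_k=(u_k^{(3)}-u_k^{(1)})/\|u_k^{(3)}-u_k^{(1)}\|_{C^0}$, a nontrivial nonnegative solution of the linearized PMC (Jacobi) equation on $\Si_\infty$; combined with the parity mismatch between $m$ and the current-multiplicity one of $\partial\Om_\infty$ through the $\mZ_2$ double cover $\C(M)\to\Z_n(M,\mZ_2)$, this forces a contradiction. Hence $m=1$, and the bound $\ind(\Si_\infty)\leq I$ follows by transferring negative directions for $\delta^2\Ah|_{\Om_\infty}$ back to $\Si_k$ via the smooth convergence on $\Si_\infty\setminus\mathcal Y$ and contradicting $\ind(\Si_k)\leq I$ if $\ind(\Si_\infty)\geq I+1$.

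For (iv) and (v): in (iv), since no element of $\PMC^h$ self-touches, any tangent at $p\in\mathcal Y$ would be a nonflat stable minimal cone in $\R^{n+1}$ (the rescaling sends $h$ to $0$), impossible by the Bernstein theorem in dimensions $n\leq 6$; hence $\mathcal Y=\emptyset$, the convergence is globally smooth, and $\Si_k=\mathrm{graph}_{\Si_\infty}(u_k)$ with $u_k\not\equiv 0$. Normalizing, $\varphi_k=u_k/\|u_k\|_{C^0}\to\varphi\neq 0$ satisfies $L_{\Si_\infty}\varphi=0$ (the common $h$ terms cancel in the difference of PMC equations to first order), giving nullity $\geq 1$. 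In (v), when $h_\infty\equiv 0$ the limit $\Si_\infty$ is minimal; if its classical Morse index were $\geq I+1$, I would transplant an $(I+1)$-dimensional classical negative subspace from $\Si_\infty$ to $\Si_k$ via the smooth graphical convergence, cutting off near $\mathcal Y$ (whose contribution to the second variation is negligible by capacity, since $\mathcal Y$ is finite and $n\geq 2$), producing $(I+1)$ linearly independent negative directions for $\delta^2\Ah|_{\Om_k}$ and contradicting $\ind(\Si_k)\leq I$. The main obstacle I anticipate is the multiplicity-one claim in (iii), specifically ruling out the odd-$m$ degenerate case: coordinating the current-multiplicity-one convergence of $\Om_k$ in $\C(M)$ with the varifold multiplicity requires careful use of the $\mZ_2$ double-cover structure and the genericity afforded by $h_\infty\in\mathcal S(g)$.
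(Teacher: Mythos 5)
Your overall architecture coincides with the paper's: the uniform first-variation bound, the definition of $\mathcal Y$ via failure of weak stability, the disjoint-ball argument combining localized unstable families into an $(I+1)$-unstable family to force $\#\mathcal Y\leq I$, and the transfer of index/instability between $\Si_k$ and $\Si_\infty$ via the convergence of $v\mapsto \Ah(F_v(\Om_k))$ are all essentially the paper's Part 1 and Parts 2--4. However, several individual steps as written would fail. First, the removable singularity at $p\in\mathcal Y$: an area bound $\|V_\infty\|(B_\rho(p))\leq C\rho^n$ together with ``$\epsilon$-regularity for integer varifolds with bounded mean curvature'' does not extend $\Si_\infty$ across $p$, because Allard-type regularity requires density close to one, while the density at $p$ is a priori an arbitrary integer and the tangent cone is a priori an arbitrary stationary cone. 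The missing ingredient (and the point of the paper's appeal to Theorem \ref{T:removable singularity}) is that $\Si_\infty$ inherits \emph{weak stability} in punctured balls $B_r(p)\setminus\{p\}$ from the $\Si_k$; only this forces every tangent cone at $p$ to be a stable minimal hypercone, hence a multiple of a plane by Simons' classification for $3\leq n+1\leq 7$, after which one splits into density-one sheets and applies Allard to each. Without stability, minimal hypercones in these dimensions need not be planar (e.g.\ the cone over a Clifford torus), so the step is genuinely incomplete.

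Second, in (iii) your odd-$m$ case is both unnecessary and unproved: since $\Si_k=\partial\Om_k$, the constancy theorem forces consecutive sheets to have alternating outer normals, so \emph{both} orientations occur among the $m$ sheets for every $m\geq 2$ regardless of parity; the limit equations then give $h_\infty\vert_{\Si_\infty}=0$ on that component, contradicting the fact that no component of an almost embedded PMC hypersurface with $h_\infty\in\mathcal S(g)$ is minimal. Your proposed route for odd $m$ via a nonnegative Jacobi field cannot close, because (iii) does not assume $g$ is bumpy, so a nonnegative Jacobi field is not a contradiction, and the ``parity mismatch through the double cover'' is not an argument. Third, in (iv) the claim $\mathcal Y=\emptyset$ does not follow from a Bernstein argument: at $p\in\mathcal Y$ the $\Si_k$ are, by your own definition of $\mathcal Y$, \emph{not} weakly stable near $p$, so blow-up limits there are not stable cones. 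The correct route is that (iii) gives multiplicity one, and multiplicity-one varifold convergence to a smooth properly embedded limit with uniformly bounded mean curvature is smooth everywhere by Allard. Finally, the limit equation in (iv) is not $L_{\Si_\infty}\varphi=0$: the $h$-terms do not cancel, since $H_{\Si_k}-H_{\Si_\infty}=h(x,u_k)-h(x,0)=\partial_\nu h\cdot u_k+o(u_k)$. The correct equation is $L_{\Si_\infty}\varphi=\partial_\nu h\cdot\varphi$, i.e.\ $L^h_{\Si_\infty}\varphi=0$, which is in fact exactly what the statement requires, since the nullity in (iv) is taken with respect to $\de^2\Ah$, whose Jacobi operator carries the $\partial_\nu h$ term as in (\ref{E: 2nd variation for Ah}).
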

\begin{remark}
One main goal of this result is to use PMC hypersurfaces with prescribing functions in $\mathcal{S}(g)$ to approximate PMC's with prescribing functions lying in $C^\infty(M)\backslash \mathcal{S}(g)$. Therefore, it is natural to not assume $h_\infty \in \mathcal S(g)$.  Indeed for some $h\in C^\infty(M)$, a PMC $\Si$ associated with $h$ may have touching set containing a relative open subset $W\subset \Si$, where $h$ vanishes. For such hypersurfaces, $\Ah$ is defined by viewing $\Si$ as an Alexandrov immersed hypersurface, and so does the weak index. 
\end{remark}
\begin{proof}
The proof follows essentially the same way as \cite[Theorem 2.3]{Sharp17} once we use Theorem \ref{T:Curvature estimates for weakly stable PMC} to replace \cite[Theorem 2.1]{Sharp17}; we will provide necessary modifications.

\vspace{1em}
{\bf Part 1:} We first have the following variant of \cite[Lemma 3.1]{Sharp17}. Given any collection of $I+1$ pairwise disjoint open sets $\{U_i\}_{i=1}^{I+1}$, we have that $\Si_k$ (we drop the sub-index $k$ in this paragraph) is weakly stable in $U_i$ for some $1\leq i\leq I+1$. Indeed, suppose this were false, then $\Si=\partial\Om$ is at least $1$-unstable in each $U_i$, hence there exist $c_i\in(0, 1)$ and $\{F^i_t\}_{t\in[-1, 1]}\subset \Diff(U_i)$ with $F^i_{-t}=(F^i_t)^{-1}$, such that $-\frac{1}{c_i}\leq \frac{d^2}{dt^2} \Ah(F^i_t(\Om))\leq -c_i$. Now for $v=(v_1, \cdots, v_{I+1})\in \overline{B}^{I+1}$, let $F_v(x)=F_{v_{I+1}}\circ \cdots \circ F_{v_1}(x)$. Since $\{U_i\}$ are pairwise disjoint, it is easy to see that $c_0=\min\{c_i\}$ and $\{F_v\}$ give an $(I+1)$-unstable pair for $\Si$, and hence is a contradiction.

This fact together with Theorem \ref{T:Curvature estimates for weakly stable PMC} imply that (up to a subsequence) $\Si_k$ converges locally smoothly and graphically to an almost embedded hypersurface $\Si_\infty$ of prescribed mean curvature $h_\infty$ (possibly with integer multiplicity) away from at most $I$ points, which we denote by $\mathcal Y$. Since as varifolds $\Si_k$ have uniformly bounded first variation, by Allard's compactness theorem \cite{Allard72}, $\Si_k$ also converges as varifolds to an integral varifold represented by $\Si_\infty$.

Now we prove that $\Si_\infty$ extends smoothly as an almost embedded hypersurface across the singular points $\mathcal Y$, i.e. $\mathcal Y$ are removable. By the argument in \cite[Claim 2, page 326]{Sharp17}, for each $y_i\in\mathcal Y$, there exists some $r_i>0$ such that $\Si_\infty$ is weakly stable in $B_{r_i}(y_i)\backslash\{y_i\}$ in the following sense. Denote $\Om_\infty$ as the weak limit of $\Om_k$ as Caccioppoli sets where $\Si_k=\partial\Om_k$. The associated functional for $\Si_\infty$ is $\A^{h_\infty}(\Si_\infty)=\Area(\Si_\infty)-\int_{\Om_\infty} h_\infty d\mH^{n+1}$. Note that the touching set of $\Si_\infty$ may contain an open subset $W\subset \Si_\infty$ and hence $\partial\Om_\infty=\Si_\infty\backslash \{\text{touching set of }\Si_\infty\}$ may only be a proper subset of $\Si_\infty$. Nevertheless, we say $\Si_\infty$ is weakly stable, if for any $X\in \X(B_{r_i}(y_i)\backslash\{y_i\})$ with the associated flow $\{\phi^X_t: t\in[-\ep, \ep]\}$, $\frac{d^2}{dt^2}\big\vert_{t=0} \A^{h_\infty}(\phi^X_t(\Si_\infty))\geq 0$. Note that if this were not true for some $X\in \X(B_{r_i}(y_i)\backslash\{y_i\})$,  as $\A^{h_k}(\phi^X_t(\Si_k))$ converges to $\A^{h_\infty}(\phi^X_t(\Si_\infty))$ smoothly as functions of $t$, then $\Si_k$ is not weakly stable in $B_{r_i}(y_i)\backslash\{y_i\}$ for $k$ sufficiently large. Following \cite[Claim 2, page 326]{Sharp17}, we can deduce the required stability property for $\Si_\infty$. 
Since $\Si_\infty$ has bounded first variation, then by a classical removable singularity result, Theorem \ref{T:removable singularity}, we get the smooth extension. Up to here, we have finished proving (\rom{1}) and (\rom{2}).

\vspace{1em}
{\bf Part 2:} If $h_\infty\in \mathcal S(g)$, \cite[Theorem 3.20]{Zhou-Zhu18} implies that $\Si_\infty$ has multiplicity 1, and is a boundary of some open set $\Om_\infty$; (note that when $h_\infty\in\mathcal{S}(g)$, only case (2) of \cite[Theorem 3.20]{Zhou-Zhu18} will happen). 
In fact, fix a point $p\in\Si_\infty$ where $\Si_\infty$ is properly embedded. If the limit $\Si_\infty$ has multiplicity $\geq 2$, then for $i$ sufficiently large and inside a neighborhood of $p$, $\Si_i$ consists of several sheets with normal pointing to the same side of $\Si_\infty$, but this can not happen when $\Si_i$ bounds a region $\Om_i$. We refer to the proof of \cite[Theorem 3.20]{Zhou-Zhu18} for more details.

If $\ind(\Si_\infty)> I$, then there exist $c_0\in(0, 1)$ and $\{F_v: v\in \overline{B}^{I+1}\}\subset \Diff(M)$ such that $-\frac{1}{c_0}\Id \leq D^2 \mathcal A^{h_\infty}(F_v(\Om_\infty))\leq -c_0\Id$ for all $v\in \overline{B}^{I+1}$. Since $\Si_k=\partial\Om_k$ converges to $\Si_\infty$ smoothly away from finitely many points, we know that $\Om_k$ converges to $\Om_\infty$ in the $\mF$-topology as Caccioppoli sets, then the sequence $v\to \mathcal A^{h_k}(F_v(\Om_k))$ converges to $v\to \mathcal A^{h_\infty}(F_v(\Om_\infty))$ smoothly as functions on $\overline{B}^{I+1}$.  Therefore, for $k$ large enough, $-\frac{2}{c_0}\Id \leq D^2 \mathcal A^{h_k}(F_v(\Om_k))\leq -\frac{c_0}{2}\Id$, so $\Si_k$ is $(I+1)$-unstable, which is a contradiction. This finishes the proof of (\rom{3}).

\vspace{1em}
{\bf Part 3:} Assuming $\Si_k\neq \Si_\infty$ eventually and $h_k=h_\infty=h\in \mathcal S(g)$ such that every element in $\PMC^h$ is properly embedded, we know $\mathcal Y=\emptyset$ by multiplicity 1 convergence and the Allard regularity theorem \cite{Allard72}. Next we will produce a Jacobi field for the second variation $\de^2\Ah$ along $\Si_\infty$; this implies the nullity is $\geq 1$. 

By (\ref{E: 2nd variation for Ah}), the Jacobi operator associated with $\de^2\Ah$ along a PMC $\Si\in \PMC^h$ is
\[ L^h_{\Si} \varphi = -\lap_{\Si} \varphi - \big(Ric^M(\nu, \nu)+|A^\Si|^2 + \partial_\nu h\big)\varphi. \]
The smooth graphical convergence of $\Si_k\to \Si$ implies that for $k$ sufficiently large, $\Si_k$ can be written as a graph $u_k$ in the normal bundle of $\Si_\infty$, and $u_k\to 0$ uniformly in smooth topology. Subtracting the mean curvature operators between $\Si_k$ and $\Si_\infty$, we get:
\[h(x, u_k)-h(x, 0)=H_{\Si_k}-H_{\Si_\infty}=L_{\Si_\infty}u_k + o(u_k),\]
where $L_{\Si_\infty}u= -\lap u - \big(Ric^M(\nu, \nu)+|A^\Si|^2\big)u$ is the Jacobi operator for second variation of area, and the second equation follows from \cite{Simon87} and \cite[page 331]{Sharp17}; (note that though the calculation in \cite[page 331]{Sharp17} is done assuming $h\equiv 0$, it does not depend on $h$). The left hand side equals to $\partial_\nu h(x, t(x)u_k) \cdot u_k$ by the mean value theorem. Let $\tilde{u}_k=u_k/\| u_k\|_{L^2(\Si_\infty)}$ be the renormalizations, then standard elliptic estimates imply that $\tilde{u}_k$ converges smoothly to a nontrivial $\varphi\in C^\infty(\Si_\infty)$ such that $\partial_\nu h \cdot \varphi=L_{\Si_\infty} \varphi$. This is the same as $L^h_{\Si_\infty} \varphi =0$, so we finish proving (\rom{4}).

\vspace{1em}
{\bf Part 4:} Assuming $h_\infty\equiv 0$, then $\Si_\infty$ is an embedded minimal hypersurface. Assume without loss of generality that $\Si_\infty$ is connected with multiplicity $m\in\N$. Suppose the Morse index $\ind(\Si_\infty)\geq I+1$, then by similar argument as in (\rom{3}), we can deduce a contradiction. In particular, by \cite[Proposition 4.3]{Marques-Neves16}, there exist $c_0\in (0, 1)$ and $\{F_v: v\in \overline{B}^{I+1}\}\subset \Diff(M)$ such that $-\frac{1}{c_0}\Id \leq D^2 \Area(F_v(\Si_\infty))\leq -c_0\Id$ for all $v\in \overline{B}^{I+1}$. Since $\Si_k$ converges to $m\cdot \Si_\infty$ as varifolds, and since $h_k\to 0$ uniformly, we know that $\mathcal A^{h_k}(F_v(\Om_k))$ converges to $m\cdot \Area(F_v(\Si_\infty))$ smoothly as functions on $\overline{B}^{I+1}$.  Therefore, for $k$ large enough, $\Om_k$ is $(I+1)$-unstable, which is a contradiction. So we finish proving (\rom{5}).
\end{proof}

There is also a theorem analogous to the above one in the setting of changing ambient metrics on $M$; see \cite[Theorem A.6]{Sharp17} for a similar result for minimal hypersurfaces. The proof proceeds the same way when one realizes that the constant $C_1$ in Theorem \ref{T:Curvature estimates for weakly stable PMC} depends only on the $\|g\|_{C^4}$ when $g$ is allowed to change.

\begin{theorem}
\label{T:compactness with changing metrics}
Let $M^{n+1}$ be a closed manifold of dimension $3\leq (n+1)\leq 7$, and $\{g_k\}_{k\in\N}$ be a sequence of metrics on $M$ that converges smoothly to some limit metric $g$. Let $\{h_k\}_{k\in \N}$ be a sequence of smooth functions with $h_k\in \mathcal S(g_k)$ that converges smoothly to some limit $h_\infty \in C^\infty(M)$. Let $\{\Si_k\}_{k\in\N}$ be a sequence of hypersurfaces with $\Si_k\in \PMC^{h_k}(\La, I)$ for some fixed $\La>0$ and $I\in\N$. Then there exists a smooth, closed, almost embedded hypersurface $\Si_\infty$ with prescribing mean curvature $h_\infty$, such that all properties (\rom{1})(\rom{2})(\rom{3}) in the above theorem are satisfied.
\end{theorem}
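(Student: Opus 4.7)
The plan is to follow the proof of Theorem \ref{T:compactness with bounded index} essentially verbatim, with the single new ingredient being a uniform version of the curvature estimate of Theorem \ref{T:Curvature estimates for weakly stable PMC} in which the constant $C_1$ depends only on $n$, the ambient dimension, $\|g_k\|_{C^4}$, $\|h_k\|_{C^3}$, and the area bound $\La$. First I would reprove Theorem \ref{T:Curvature estimates for weakly stable PMC} in this parametrized form by the same blow-up contradiction: given a putative counterexample $\Si_k\in \PMC^{h_k}$ (weakly stable with respect to $(M,g_k)$) with $\sup_U \dist_{g_k}^2(\cdot,\partial U)\,|A^{\Si_k}|^2\to\infty$, rescale at the concentration point by $|A^{\Si_k}|$. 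Because $g_k\to g$ in $C^4$, the rescaled metrics converge in $C^3_{\mathrm{loc}}$ to the Euclidean metric on $\R^{n+1}$, and the rescaled prescribing functions $|A^{\Si_k}|^{-1}h_k$ converge to $0$ in $C^3_{\mathrm{loc}}$; hence one obtains a non-flat, complete, embedded (by the almost-embedded assumption and the maximum principle), two-sided minimal hypersurface in $\R^{n+1}$ with polynomial volume growth. Weak stability is preserved under $C^{3,\al}$ convergence because the relevant second variation of $\mathcal{A}^{h_k}$ (computed in the metric $g_k$) converges smoothly to the classical second variation of area in the Euclidean metric. Schoen-Simon-Yau and Schoen-Simon Bernstein then give a contradiction.

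With this uniform estimate in hand, the combinatorial argument of Part 1 of Theorem \ref{T:compactness with bounded index} is unchanged: given any $I+1$ pairwise disjoint open sets $\{U_i\}$, at least one must witness weak stability for $\Si_k$, for otherwise one can compose $(I+1)$ one-parameter unstable diffeomorphism families supported in disjoint $U_i$ to produce an $(I+1)$-unstable family for $\Si_k$. This gives uniform curvature bounds on $\Si_k$ outside a set $\mathcal{Y}_k$ of at most $I$ points. Passing to a subsequence, $\mathcal{Y}_k\to \mathcal{Y}$ with $\#\mathcal{Y}\le I$, and standard elliptic PDE together with smooth convergence $(g_k,h_k)\to (g,h_\infty)$ yields a smooth almost embedded hypersurface $\Si_\infty$ in $(M,g)$, with $H_{\Si_\infty}=h_\infty|_{\Si_\infty}$, and locally smooth graphical convergence $\Si_k\to \Si_\infty$ (with integer multiplicity) away from $\mathcal{Y}$. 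Allard's compactness produces the varifold limit and shows it is the integral varifold associated with $\Si_\infty$. Removability of the points in $\mathcal{Y}$ follows as in Part 1 of Theorem \ref{T:compactness with bounded index}: one checks that $\Si_\infty$ is weakly stable in each punctured ball $B_{r_i}(y_i)\setminus\{y_i\}$ by transferring weak stability from $\Si_k$ via smooth convergence of the functionals $\mathcal{A}^{h_k}$ (in metric $g_k$) to $\mathcal{A}^{h_\infty}$ (in metric $g$), and then applies the removable singularity Theorem \ref{T:removable singularity}.

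For (iii), when $h_\infty\in\mathcal{S}(g)$, the multiplicity-one conclusion follows from \cite[Theorem 3.20]{Zhou-Zhu18} applied to the limit $\Si_\infty$, exactly as in Part 2 of the proof of Theorem \ref{T:compactness with bounded index}: since each $\Si_k=\partial\Om_k$, no sheet-doubling can survive in the limit. For the index bound, if $\ind(\Si_\infty)\ge I+1$ then there exist $c_0\in(0,1)$ and $\{F_v\}_{v\in\overline{B}^{I+1}}\subset \Diff(M)$ realizing an $(I+1)$-instability of $\Si_\infty$ with respect to $\mathcal{A}^{h_\infty}$ computed in $g$. Since $\Om_k\to\Om_\infty$ in $\mF$-topology, $g_k\to g$ in $C^4$, and $h_k\to h_\infty$ in $C^3$, the maps $v\mapsto \mathcal{A}^{h_k}_{g_k}(F_v(\Om_k))$ converge smoothly on $\overline{B}^{I+1}$ to $v\mapsto \mathcal{A}^{h_\infty}_{g}(F_v(\Om_\infty))$, so the instability passes to $\Si_k$ for large $k$, contradicting $\ind(\Si_k)\le I$.

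The main obstacle is the parametrized curvature estimate: one must ensure the blow-up argument is genuinely stable under a varying $C^4$-bounded family of ambient metrics, and in particular that the stability inequality for $\mathcal{A}^{h_k}$ passes to stability for the Euclidean area of the blow-up limit. Once this is handled, every other step in the proof of Theorem \ref{T:compactness with bounded index} (stability transfer, removable singularity, Theorem 3.20 of Zhou-Zhu, and the instability pullback) is robust under smooth convergence of $(g_k,h_k)$ and proceeds without modification.
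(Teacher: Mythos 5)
Your proposal is correct and follows essentially the same route as the paper: the paper's entire justification is that the proof of Theorem \ref{T:compactness with bounded index} carries over once one observes that the constant $C_1$ in the curvature estimate of Theorem \ref{T:Curvature estimates for weakly stable PMC} depends only on $\|g\|_{C^4}$ (together with $n$, $\|h\|_{C^3}$, and the area bound) when the metric is allowed to vary, which is exactly the parametrized blow-up argument you carry out. The remaining steps you describe (the disjoint-open-sets index argument, stability transfer, removable singularity, multiplicity one via Theorem 3.20 of Zhou--Zhu, and the instability pullback) are precisely the paper's intended repetition of the fixed-metric proof.
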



\section{Morse index upper bound}
\label{S:Morse index upper bound}

In this part, we will establish Morse index upper bound for min-max PMC hypersurfaces obtained in Theorem \ref{T:main min-max theorem}. We will follow closely the strategy of Marques-Neves \cite[Theorem 1.2]{Marques-Neves16}, where they proved Morse index upper bound for min-max minimal hypersurfaces. Recall that the Morse index of an almost embedded PMC hypersurface $\Si$ is given in Definition \ref{D:weak index}.

\begin{theorem}
\label{T:main index bound thm}
Let $(M^{n+1}, g)$ be a closed Riemannian manifold of dimension $3\leq (n+1)\leq 7$, and $h\in\mathcal{S}(g)$ which satisfies $\int_M h\geq 0$.  Given a $k$-dimensional cubical complex $X$ and a subcomplex $Z\subset X$, let $\Phi_0 
: X\to \C(M)$ be a map continuous in the $\mF$-topology, and $\Pi$ be the associated $(X, Z)$-homotopy class of $\Phi_0$. Suppose
\begin{equation}
\label{E:nontrivial assumption2}
\bL^h(\Pi)>\max_{x\in Z}\Ah(\Phi_0(x)).
\end{equation}
Then there exists a nontrivial, smooth, closed, almost embedded hypersurface $\Si^n\subset M$, such that 
\begin{itemize}
\item $\Si$ is the boundary of some $\Om\in\C(M)$ where its mean curvature with respect to the unit outer normal of $\Om$ is $h$, i.e.
\[ H_{\Si} = h\vert_\Si, \]
\item $\Ah(\Om)=\bL^h(\Pi)$,
\item $\ind(\Si)\leq k$.
\end{itemize}
\end{theorem}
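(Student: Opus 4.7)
The existence statement and the identity $\Ah(\Om) = \bL^h(\Pi)$ are immediate from Theorem \ref{T:main min-max theorem}; only the index bound $\ind(\Si) \leq k$ is new. The plan is to adapt the deformation argument of Marques-Neves \cite[\S 5]{Marques-Neves16} to the $\Ah$-functional on $\C(M)$, arguing by contradiction.

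After pull-tightening, fix a min-max sequence $\{\Phi_i\} \in \Pi$ and let $\mathcal V \subset \bC(\{\Phi_i\})$ be the set of critical varifolds coming from almost embedded PMC hypersurfaces $\Si = \partial \Om$ with $\Ah(\Om) = \bL^h(\Pi)$; $\mathcal V$ is nonempty by Theorem \ref{T:main min-max theorem}. Suppose for contradiction that every $\Si \in \mathcal V$ satisfies $\ind(\Si) \geq k+1$. Then Definition \ref{D:k-unstable} and the remark after it supply, for each $\Si = \partial \Om \in \mathcal V$, an $\ep_\Si$-neighborhood $U_\Si = \overline{\bB}^\mF_{2\ep_\Si}(\Om)$, a smooth family $\{F_v^\Si\}_{v \in \overline B^{k+1}} \subset \Diff(M)$, and $c_{0,\Si} \in (0,1)$. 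Combining the Hessian bound $D^2 \Ah_{\Om'} \leq -c_{0,\Si}\Id$ with the localization $|m(\Om')| \leq c_{0,\Si}/\sqrt{10}$ and a Taylor expansion around $m(\Om')$ gives a uniform $\eta_\Si > 0$ with
\[
\Ah\bigl(F_v^\Si(\Om')\bigr) \leq \Ah(\Om') - \eta_\Si \qquad \text{for all } v \in \partial B^{k+1},\ \Om' \in U_\Si.
\]
Varifold compactness of $\bC(\{\Phi_i\})$ lets me cover $\mathcal V$ by finitely many $U_1, \dots, U_N$ with common constants $c_0, \eta > 0$.

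The key step is to construct, for $i$ large, a competitor $\Phi_i' \in \Pi$ satisfying $\sup_X \Ah(\Phi_i'(x)) \leq \bL^h(\Pi) - \eta/4$. After slightly shrinking and refining the cover, I pick a continuous selection $a = a(\Om')$ on $\bigcup_a U_a$, a cutoff $\lambda \colon \C(M) \to [0,1]$ equal to $1$ on a thickening of $\mathcal V$ and vanishing outside $\bigcup_a U_a$, and a fixed direction $w_\ast \in \partial B^{k+1}$, and set
\[
\Phi_i'(x) := F^{a(\Phi_i(x))}_{\lambda(\Phi_i(x))\, w_\ast}\bigl(\Phi_i(x)\bigr).
\]
The displayed inequality yields a strict $\eta/2$ decrease of $\Ah$ wherever $\lambda = 1$, while $\Phi_i' \equiv \Phi_i$ off $\bigcup_a U_a$. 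The main obstacle is making this selection continuous where the $U_a$'s overlap: I would handle this by the partition-of-unity/composition trick of \cite[Lemma 5.3]{Marques-Neves16}, using the symmetry $F^a_{-v} = (F^a_v)^{-1}$ to glue composed deformations without loss, and if necessary replacing $\lambda(\Phi_i(x))\, w_\ast$ by a continuous $X$-dependent choice inside $\partial B^{k+1}$.

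Finally, the linear homotopy $\Psi_i(t,x) := F^{a(\Phi_i(x))}_{t\lambda(\Phi_i(x)) w_\ast}(\Phi_i(x))$ is $\mF$-continuous and joins $\Phi_i$ to $\Phi_i'$. The nontriviality hypothesis (\ref{E:nontrivial assumption2}) together with the pull-tight property (\ref{E:boundary requirement}) force $\Phi_i(x)$ to stay uniformly bounded away from $\mathcal V$ for $x \in Z$ and $i$ large (otherwise a varifold in $|\partial \Phi_0|(Z)$ would attain $\Ah \geq \bL^h(\Pi)$, contradicting (\ref{E:nontrivial assumption2})), so $\Psi_i \equiv \Phi_i$ near $Z$ and $\{\Phi_i'\} \in \Pi$. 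But then $\limsup_i \sup_X \Ah(\Phi_i') \leq \bL^h(\Pi) - \eta/4$ contradicts the definition of $\bL^h(\Pi)$, so some $\Si \in \mathcal V$ must satisfy $\ind(\Si) \leq k$.
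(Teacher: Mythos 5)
Your proposal has the right ingredients (the $(k+1)$-unstable structure, a deformation supported near the high-index critical points, the observation that the nontriviality hypothesis keeps the deformation away from $Z$), but the logical shape of the contradiction is wrong, and the deformation itself has a quantitative flaw. First, the logic: your deformation is the identity outside $\bigcup_a U_a$, a neighborhood of the \emph{regular} part $\mathcal V$ of the critical set, but $\bC(\{\Phi_i\})$ in general contains non-regular (non-almost-minimizing) varifolds at the level $\bL^h(\Pi)$ that lie far from $\mathcal V$. At parameters $x$ where $|\partial\Phi_i(x)|$ is near such elements, $\Phi_i'=\Phi_i$ and $\Ah(\Phi_i'(x))$ is still arbitrarily close to $\bL^h(\Pi)$, so the claimed bound $\limsup_i\sup_X\Ah(\Phi_i')\leq \bL^h(\Pi)-\eta/4$ is unattainable and no contradiction with the definition of $\bL^h(\Pi)$ arises. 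The paper's Theorem \ref{T:generic index bound thm} does not try to lower the width: it produces a new sequence with $\bL^h(\{\Psi_i\})\leq\bL^h$ whose critical set \emph{avoids} $\mF$-neighborhoods of the index-$\geq k+1$ critical points, and then re-applies the Min-max Theorem \ref{T:main min-max theorem} to that sequence to extract a regular critical point, which is then forced to have index $\leq k$.

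Second, the deformation with a \emph{fixed} direction $w_*\in\partial B^{k+1}$ and a radial cutoff $\lambda$ cannot work: in the transition region $0<\lambda<1$ the displacement $v=\lambda w_*$ lies in the interior of $\overline B^{k+1}$ and may land near the maximum $m(\Om')$, where $\Ah(F_v(\Om'))$ can \emph{exceed} $\Ah(\Om')$ by as much as $\tfrac{1}{2c_0}|m(\Om')|^2\leq c_0/20$; since the transition region can contain near-critical parameters, even the local decrease fails. Note that your argument nowhere uses $\dim X\leq k$, which is a red flag: if it worked it would bound the index by $0$ for every family. The dimension hypothesis enters exactly here in the paper: Theorem \ref{T:deformation theorem} first constructs a map $\hat H_i(\cdot,1)$ into a shrinking ball $B^{k+1}_{2^{-i}}(0)$ that \emph{continuously dodges} $m_i(x)$ (possible only because $\dim U_{i,2\ep}\leq k<k+1$), so the initial displacement changes $\Ah$ by $o(1)$, and then runs the negative gradient flow of $\Ah_{\Phi_i(x)}$ for a long time $T_i$ (Lemmas \ref{L:index bound pre lemma1}, \ref{L:index bound pre lemma2}), which only decreases $\Ah$. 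Finally, your finite cover of $\mathcal V$ with uniform constants is not justified for an arbitrary $h\in\mathcal S(g)$: the set of PMC critical points at level $\bL^h$ need not be finite or even compact with uniform $(\ep_\Si,c_{0,\Si})$. The paper handles this by first proving the result for good pairs $(g_j,h)$ (Lemma \ref{L:good pairs are generic}, where $\PMC^h(\La,I)$ is finite and $\mathcal W^{k+1}$ is countable, treated by an exhaustion/diagonal argument), and then recovering the original metric via the compactness Theorem \ref{T:compactness with changing metrics}; this metric-perturbation step is absent from your proposal and cannot be omitted.
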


\subsection{Preliminary lemmas}

Let $h\in \mathcal{S}(g)$. Assume that $\Si_0=\partial\Om_0\in \PMC^h$ is $k$-unstable in an $\ep$-neighborhood, $\ep>0$. Let $\{F_v\}_{v\in \overline{B}^k}$ be the associated smooth family given in Definition \ref{D:k-unstable}.

The first lemma is a counterpart of \cite[Lemma 4.4]{Marques-Neves16}.
\begin{lemma}
\label{L:index bound pre lemma1}
There exists $\bar{\eta}=\bar{\eta}(\ep, \Si_0, \{F_v\})>0$, such that if $\Om\in\C(M)$ with $\mF(\Om, \Om_0)\geq \ep$ satisfies
\[ \Ah(F_v(\Om))\leq \Ah(\Om)+\bar{\eta} \]
for some $v\in \overline{B}^k$, then $\mF(F_v(\Om), \Om_0)\geq 2\bar{\eta}$.
\end{lemma}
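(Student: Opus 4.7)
My plan is to argue by contradiction. If no such $\bar\eta>0$ exists, then I can extract sequences $\bar\eta_n\downarrow 0$, $\Om_n\in \C(M)$ with $\mF(\Om_n,\Om_0)\geq\ep$, and $v_n\in\overline{B}^k$ such that
\[
\Ah(F_{v_n}(\Om_n))\leq \Ah(\Om_n)+\bar\eta_n
\qquad\text{and}\qquad
\mF(F_{v_n}(\Om_n),\Om_0)<2\bar\eta_n.
\]
By compactness of $\overline{B}^k$, after passing to a subsequence I may assume $v_n\to v_\infty\in\overline{B}^k$, hence $F_{-v_n}\to F_{-v_\infty}$ in $C^\infty(M)$. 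The second displayed condition says $F_{v_n}(\Om_n)\to\Om_0$ in $\mF$, so these sets have uniformly bounded perimeter; by the standard continuity of the pushforward action of smooth diffeomorphisms on Caccioppoli sets in the $\mF$-topology,
\[
\Om_n = F_{-v_n}(F_{v_n}(\Om_n)) \longrightarrow F_{-v_\infty}(\Om_0) \quad \text{in } \mF.
\]

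Since $\Ah(\Om)=\M(\partial\Om)-\int_\Om h\,d\mH^{n+1}$ is $\mF$-continuous on $\C(M)$ (the volume term is $L^1$-continuous in $\chi_\Om$, and the boundary mass is continuous under the varifold convergence implied by $\mF$-convergence), passing to the limit in the first inequality yields
\[
\Ah(\Om_0) \leq \Ah(F_{-v_\infty}(\Om_0)).
\]
Now I apply Definition \ref{D:k-unstable} at $\Om'=\Om_0$, which trivially lies in $\overline{\bB}^\mF_{2\ep}(\Om_0)$: the function $u\mapsto \Ah(F_u(\Om_0))$ has a unique maximum at $m(\Om_0)$, and $m(\Om_0)=0$ because $\Om_0$ is a critical point of $\Ah$. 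Hence $\Ah(F_{-v_\infty}(\Om_0))\leq \Ah(\Om_0)$ with equality forcing $v_\infty=0$, and combined with the previous display this gives $v_\infty=0$.

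Thus $F_{-v_\infty}=\Id$, so the $\mF$-limit of $\Om_n$ equals $\Om_0$, contradicting the uniform separation $\mF(\Om_n,\Om_0)\geq\ep>0$. The only technical point in this plan is the joint $\mF$-continuity of $(F,\Om)\mapsto F(\Om)$ together with the $\mF$-continuity of $\Ah$ on Caccioppoli sets, both of which follow from the fact that $\mF$-convergence simultaneously controls the underlying Lebesgue measure (hence the integral of a bounded $h$ over $\Om$) and the boundary varifold mass. The single place where the \emph{uniqueness} of the maximizer $m(\Om_0)$, rather than only the Hessian bounds, is invoked is the step pinning down $v_\infty=0$.
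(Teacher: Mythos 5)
Your proof is correct and follows essentially the same route as the paper's: a contradiction argument with $\bar\eta_n\downarrow 0$, compactness of $\overline{B}^k$ to extract $v_n\to v_\infty$, the identification $\Om_n\to F_{-v_\infty}(\Om_0)$ in $\mF$, and the inequality $\Ah(\Om_0)\leq \Ah(F_{-v_\infty}(\Om_0))$ forcing $v_\infty=0$ via the unique maximum of $\Ah_{\Om_0}$ at $m(\Om_0)=0$. You have merely made explicit the continuity facts the paper leaves implicit.
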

\begin{proof}
Assume by contradiction that there exist $\Om_i$, $\mF(\Om_i, \Om_0)\geq \ep$ satisfying
\[ \Ah(F_{v_i}(\Om_i))\leq \Ah(\Om_i)+\frac{1}{i} \]
for some $v_i\in \overline{B}^k$, but $\mF(F_{v_i}(\Om_i), \Om_0)\leq \frac{2}{i}$.

Denote $v=\lim v_i$, and pass to the limit as $i\to\infty$, then $\Om_i\to F_{-v}(\Om_0)$ in $\mF$-metric, and $\Ah(\Om_0)\leq \Ah(F_{-v}(\Om_0))$, which implies that $v=0$; hence $\Om_i\to \Om_0$ in the $\mF$-metric, which is a contradiction.
\end{proof}

For each $\Om\in\overline{\bB}^{\mF}_{2\ep}(\Om_0)$, 
consider the one-parameter flow $\{\phi^\Om(\cdot, t): t\geq 0\}\subset \Diff(\overline{B}^k)$ generated by the vector field
\[ u\to -(1-|u|^2)\nabla \Ah_\Om(u),\quad u\in \overline{B}^k. \]
When $u\in \overline{B}^k$ is fixed, the function $t\to \Ah_\Om(\phi^\Om(u, t))$ is non-increasing.

The following lemma is a variant of \cite[Lemma 4.5]{Marques-Neves16}, and the proof is recorded in Appendix \ref{A:a calculus lemma}.
\begin{lemma}
\label{L:index bound pre lemma2}
For any $\de<1/4$ there exists $T=T(\de, \ep, \Om_0, \{F_v\}, c_0)\geq 0$ such that for any $\Om\in\overline{\bB}^{\mF}_{2\ep}(\Om_0)$ and $v\in \overline{B}^k$ with $|v-m(\Om)|\geq \de$, we have
\[ \Ah_{\Om}(\phi^\Om(v, T))<\Ah_\Om(0)-\frac{c_0}{10}\, \text{ and }\, |\phi^\Om(v, T)|>\frac{c_0}{4}. \]
\end{lemma}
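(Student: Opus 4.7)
I would prove the lemma by a direct quantitative argument exploiting strong concavity of $\Ah_\Om$ and the structure of the descent flow, without invoking compactness. Along a trajectory, $f(t) := \Ah_\Om(\phi^\Om(v,t))$ satisfies $f'(t) = -(1-|\phi^\Om(v,t)|^2)\,|\nabla\Ah_\Om(\phi^\Om(v,t))|^2 \leq 0$. The hypothesis $|v - m(\Om)| \geq \de$ together with the upper Hessian bound $D^2\Ah_\Om \leq -c_0\Id$ from Definition \ref{D:k-unstable} give $f(0) \leq \Ah_\Om(m(\Om)) - \tfrac{c_0}{2}\de^2$, and monotonicity preserves this gap; combined with the Polyak–Łojasiewicz inequality $|\nabla\Ah_\Om(u)|^2 \geq 2c_0(\Ah_\Om(m(\Om)) - \Ah_\Om(u))$, which is a direct consequence of $D^2\Ah_\Om\leq -c_0\Id$, this yields the uniform lower bound $|\nabla\Ah_\Om(\phi^\Om(v,t))|^2 \geq c_0^2\de^2$ throughout the trajectory. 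Smoothness of $\{F_v\}_{v\in\overline{B}^k}$ and the uniform perimeter bound inherent to $\mF$-balls provide a finite constant $C = C(\ep,\Om_0,\{F_v\},h)$ with $|\Ah_\Om(u)| \leq C$ for every admissible $(\Om,u)$.

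Next I would show that for any $T_0 > 12C/(c_0^2\de^2)$ there is some $t^* \in [0, T_0]$ with $|\phi^\Om(v, t^*)| > 9/10$. Otherwise $|\phi^\Om(v,t)| \leq 9/10$ on $[0, T_0]$ would give $1-|\phi^\Om|^2 \geq 19/100 > 1/6$, hence $f'(t) \leq -c_0^2\de^2/6$, and so $f(T_0) \leq f(0) - T_0 c_0^2\de^2/6 < -C$, contradicting $f \geq -C$. At such $t^*$, the upper concavity estimate $\Ah_\Om(u) \leq \Ah_\Om(m(\Om)) - \tfrac{c_0}{2}|u - m(\Om)|^2$, the bound $|m(\Om)| < c_0/\sqrt{10}$, and the complementary inequality $\Ah_\Om(0) \geq \Ah_\Om(m(\Om)) - |m(\Om)|^2/(2c_0) > \Ah_\Om(m(\Om)) - c_0/20$ (from $D^2\Ah_\Om \geq -\tfrac{1}{c_0}\Id$) combine to deliver $\Ah_\Om(\phi^\Om(v,t^*)) < \Ah_\Om(0) - c_0/10$, while $|\phi^\Om(v,t^*)| > 9/10 > c_0/4$ is immediate since $c_0 < 1$.

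Both inequalities then persist for $t \geq t^*$. Monotonicity of $f$ gives (a) automatically. For (b), the expansion of $\Ah_\Om$ at $0$ using $D^2\Ah_\Om \leq -c_0\Id$ yields $u \cdot \nabla\Ah_\Om(u) \leq \Ah_\Om(u) - \Ah_\Om(0) - \tfrac{c_0}{2}|u|^2$; evaluated at $u = \phi^\Om(v,t)$ with $f(t) < \Ah_\Om(0)$ this is strictly negative, so
\[ \tfrac{d}{dt}|\phi^\Om(v,t)|^2 = -2(1-|\phi^\Om(v,t)|^2)\,\phi^\Om(v,t) \cdot \nabla\Ah_\Om(\phi^\Om(v,t)) \geq c_0(1-|\phi^\Om|^2)|\phi^\Om|^2 \geq 0, \]
and $|\phi^\Om(v,t)|$ is non-decreasing for $t \geq t^*$. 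Taking $T := 13C/(c_0^2\de^2)$ (or any $T > 12C/(c_0^2\de^2)$) therefore satisfies both required inequalities for every admissible $(\Om,v)$.

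The main point of care is establishing the uniform constant $C$ on $\overline{\bB}^\mF_{2\ep}(\Om_0) \times \overline{B}^k$: smoothness of $\{F_v\}_{v\in\overline{B}^k}$ in $v$ gives uniform Lipschitz bounds on $F_v$, which together with the uniform bound on $\mH^n(\partial\Om)$ inherent to the $\mF$-ball and the estimate $|\Ah_\Om(u)| \leq \mH^n(\partial F_u(\Om)) + \sup_M|h|\cdot\vol(M)$ produce $C$. The rest of the argument is standard calculus on the descent flow.
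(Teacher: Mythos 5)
Your argument is correct, and it takes a genuinely different route from the paper. The paper reduces Lemma \ref{L:index bound pre lemma2} to an abstract calculus statement (Lemma \ref{L:a calculus lemma}) proved by a compactness/contradiction argument over a compact parameter space, and the actual work in its proof is the compactification of the non-compact ball $\overline{\bB}^{\mF}_{2\ep}(\Om_0)$ by adjoining varifold limits $(V_\infty,\Om_\infty)$ so that the family $\om\mapsto f^\om$ becomes continuous on a compact space. You avoid that step entirely by making the argument effective: the Polyak--{\L}ojasiewicz inequality $|\nabla\Ah_\Om(u)|^2\geq 2c_0(\Ah_\Om(m(\Om))-\Ah_\Om(u))$ (a correct consequence of $D^2\Ah_\Om\leq -c_0\Id$ on the convex set $\overline{B}^k$, since $m(\Om)$ is an interior critical point), together with the a priori gap $f(0)\leq \Ah_\Om(m(\Om))-\tfrac{c_0}{2}\de^2$, gives a uniform gradient lower bound along the whole trajectory; the only uniformity over $\Om\in\overline{\bB}^{\mF}_{2\ep}(\Om_0)$ you then need is the elementary mass bound $|\Ah_\Om(u)|\leq C$, which indeed follows from the perimeter bound built into the $\mF$-ball and the smoothness of $\{F_v\}$. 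Your numerical estimates all check out: $1-(9/10)^2=19/100>1/6$; at $|u|>9/10$ one has $|u-m(\Om)|>9/10-c_0/\sqrt{10}$, so $\tfrac{c_0}{2}|u-m(\Om)|^2>0.17\,c_0$, while $\Ah_\Om(0)>\Ah_\Om(m(\Om))-c_0/20$, yielding a drop of more than $0.12\,c_0>c_0/10$; and the monotonicity of $t\mapsto|\phi^\Om(v,t)|$ once $f(t)<\Ah_\Om(0)$ follows correctly from $u\cdot\nabla\Ah_\Om(u)\leq \Ah_\Om(u)-\Ah_\Om(0)-\tfrac{c_0}{2}|u|^2$. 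What your approach buys is an explicit $T$ depending only on $(C,c_0,\de)$ and the elimination of the compactification machinery; what the paper's approach buys is that the same soft compactness template transfers verbatim from \cite[Lemma 4.5]{Marques-Neves16} with minimal new computation. Either proof is acceptable.
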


\subsection{Deformation theorem}

Taking a min-max sequence $\{\Phi_i\}_{i\in\N}$, we will prove a deformation theorem as an adaption of \cite[Theorem 5.1]{Marques-Neves16}  to our setting. Recall that $\PMC^h$ denotes the class of smooth, closed, almost embedded hypersurface $\Si\subset M$ represented as boundary $\Si=\partial\Om$, and of prescribed mean curvature $h$.

Fix a $\si>0$ such that $\bL^h-\sup_{x\in Z}\Ah(\Phi_0(x))>2\si$. Denote 
\[ X_{i, \si}=\{x\in X, \text{ such that } \Ah(\Phi_i(x))\geq \bL^h-\si \}. \]
Note that when $i$ is sufficiently large, $ X_{i, \si}\subset X\backslash Z$.

Now we present the deformation theorem, and the proof follows closely that of \cite[Theorem 5.1]{Marques-Neves16}. Given two subsets $A, B\subset \C(M)$, we denote
\[ \mF(A, B) := \inf\{ \mF(\Om_A, \Om_B): \Om_A\in A, \Om_B\in B \}. \]

\begin{theorem}
\label{T:deformation theorem}
Suppose that
\begin{enumerate}[label=(\alph*)]
\item $\Si=\partial\Om \in \PMC^h$ is $(k+1)$-unstable;
\item $K\subset \C(M)$ is a subset, so that $\mF(\{\Om\}, K)> 0$ and $\mF(\Phi_i(X_{i, \si}), K)>0$ for all $i\geq i_0$;
\item $\Ah(\Om)=\bL^h$.
\end{enumerate}
Then there exist $\bar{\ep}>0$, $j_0\in\N$, and another sequence $\{\Psi_i\}_{i\in\N}$, $\Psi_i: X\to (\C(M), \mF)$, so that
\begin{enumerate}[label=(\roman*)]
\item $\Psi_i$ is homotopic to $\Phi_i$ in the $\mF$-topology for all $i\in \N$ and $\Psi_i\vert_Z=\Phi_i\vert_Z$ for $i\geq j_0$;
\item $\bL^h(\{\Psi_i\})\leq \bL^h$;
\item 
$\mF(\Psi_i(X_{i, \si}), \overline{\bB}^{\mF}_{\bar\ep}(\Om)\cup K)>0$ for all $i\geq j_0$.
\end{enumerate}
\end{theorem}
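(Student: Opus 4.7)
The proof plan adapts the deformation argument of \cite[Theorem 5.1]{Marques-Neves16} from the area functional to the weighted $\Ah$-functional; the key technical lemmas (Lemmas \ref{L:index bound pre lemma1} and \ref{L:index bound pre lemma2}) are already formulated in the $\Ah$-setting. The idea is to construct $\Psi_i$ from $\Phi_i$ by an $\mF$-continuous homotopy that uses the $(k+1)$-unstable family $\{F_v\}_{v\in\overline{B}^{k+1}}$ together with the gradient flow $\phi^{\Om'}$, localized near $\Om$ by a cutoff built from the $\mF$-distance to $\Om$ itself.

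First I would fix the data. Choose $\ep>0$ small enough that (a) the $(k+1)$-unstability of $\Si=\partial\Om$ holds on $\overline{\bB}^\mF_{2\ep}(\Om)$; (b) $\overline{\bB}^\mF_{3\ep}(\Om)$ is disjoint from $K$, using hypothesis (b) and $\mF(\{\Om\},K)>0$; and (c) $\Phi_0(Z)\cap \overline{\bB}^\mF_{3\ep}(\Om)=\emptyset$, using $\mF$-continuity of $\Ah$ at $\Om$ together with the gap $\bL^h > \sup_{z\in Z}\Ah(\Phi_0(z))+2\si$. Then Lemma \ref{L:index bound pre lemma1} supplies $\bar\eta>0$, and Lemma \ref{L:index bound pre lemma2} with $\de=c_0/\sqrt{10}$ supplies $T>0$. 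Fix a constant base point $v^*\in\overline{B}^{k+1}$ with $|v^*|=2c_0/\sqrt{10}$, so that $|v^*-m(\Om')|\geq c_0/\sqrt{10}$ for every $\Om'\in\overline{\bB}^\mF_{2\ep}(\Om)$ (the maximizer lies in $B^{k+1}_{c_0/\sqrt{10}}(0)$). Choose a continuous cutoff $\chi_i:X\to[0,1]$ equal to $1$ on $\{\mF(\Phi_i(\cdot),\Om)\leq\ep\}$ and $0$ on $\{\mF(\Phi_i(\cdot),\Om)\geq 2\ep\}$; condition (c) combined with the boundary bound (\ref{E:boundary requirement}) forces $\chi_i\equiv 0$ on $Z$ for every $i\geq j_0$.

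The deformation is
\begin{equation*}
\Psi_i(x) \;=\; F_{\chi_i(x)\,\phi^{\Phi_i(x)}(v^*,T)}\bigl(\Phi_i(x)\bigr),
\end{equation*}
extended by $\Psi_i(x)=\Phi_i(x)$ where $\chi_i(x)=0$; the linear path $s\mapsto F_{s\,\chi_i(x)\,\phi^{\Phi_i(x)}(v^*,T)}(\Phi_i(x))$ gives an $\mF$-continuous homotopy from $\Phi_i$ to $\Psi_i$ stationary on $Z$, so (i) is immediate. For (ii), on $\{\chi_i=0\}$ nothing changes; on $\{\chi_i>0\}$ concavity of $\Ah_{\Phi_i(x)}$ bounds the $\Ah$-value along the entire homotopy by $\Ah_{\Phi_i(x)}(m(\Phi_i(x)))\leq\bL^h+o_\ep(1)$, using $m(\Om)=0$ and the $\mF$-continuity of $\Ah$, so shrinking $\ep$ yields $\bL^h(\{\Psi_i\})\leq\bL^h$. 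For (iii), fix $x\in X_{i,\si}$: if $\chi_i(x)=0$, then $\mF(\Psi_i(x),\Om)\geq 2\ep$ trivially; otherwise, Lemma \ref{L:index bound pre lemma2} yields $\Ah\bigl(F_{\phi^{\Phi_i(x)}(v^*,T)}(\Phi_i(x))\bigr)\leq \Ah(\Phi_i(x))-c_0/10$, and Lemma \ref{L:index bound pre lemma1} upgrades this to the quantitative separation $\mF(\Psi_i(x),\Om)\geq 2\bar\eta$. Separation from $K$ is automatic from (b), so $\bar\ep=\min\{\ep,\bar\eta\}$ works.

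The main obstacle is the transition annulus $\{0<\chi_i<1\}$: along the scaled parameter the $\Ah$-functional can exceed $\Ah(\Phi_i(x))$ by an amount bounded by $\Ah_{\Phi_i(x)}(m(\Phi_i(x)))-\Ah(\Phi_i(x))$, which vanishes only as $\mF(\Phi_i(x),\Om)\to 0$; the extra parameter dimension $k+1>\dim X$ is what allows one to route the path $s\mapsto s\,\chi_i(x)\,\phi^{\Phi_i(x)}(v^*,T)$ around the moving maximizer $m(\Phi_i(x))$ so that the homotopy never crosses the forbidden peak, rendering (ii) genuinely $\leq\bL^h$ rather than only $\bL^h+O(\ep)$. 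A parallel technical care is needed for (iii) uniformly across $\{0<\chi_i<1\}$, where the displacement $|\chi_i(x)\,\phi^{\Phi_i(x)}(v^*,T)|$ can be small; the resolution is to rescale Lemma \ref{L:index bound pre lemma1} to the attenuated displacement, exactly as in the proof of \cite[Theorem 5.1]{Marques-Neves16}.
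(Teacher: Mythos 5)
Your construction has a genuine gap exactly at the point you flag as "the main obstacle," and the fix you gesture at is not compatible with the formula you wrote down. With a fixed base point $v^*$ and the deformation $\Psi_i(x)=F_{\chi_i(x)\,\phi^{\Phi_i(x)}(v^*,T)}(\Phi_i(x))$, the displacement on the transition annulus $\{0<\chi_i<1\}$ is a linear rescaling $\chi_i(x)\,w$ of the flow endpoint $w=\phi^{\Phi_i(x)}(v^*,T)$. The intermediate points $s\,w$ are not on the gradient-flow trajectory, so the monotonicity of $t\mapsto \Ah_{\Phi_i(x)}(\phi^{\Phi_i(x)}(u,t))$ gives you nothing there; concavity only bounds $\Ah_{\Phi_i(x)}(\chi_i(x)w)$ by $\Ah_{\Phi_i(x)}(m(\Phi_i(x)))\leq \Ah(\Phi_i(x))+\tfrac{1}{2c_0}|m(\Phi_i(x))|^2$. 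On the annulus $\mF(\Phi_i(x),\Om)$ stays between $\ep$ and $2\ep$ and does \emph{not} tend to zero with $i$, so $|m(\Phi_i(x))|$ can be as large as $c_0/\sqrt{10}$ and the excess is a fixed $c_0/20$ independent of $i$ and $\ep$. Hence $\bL^h(\{\Psi_i\})$ can genuinely exceed $\bL^h$ and (ii) fails. Your remark that the extra dimension $k+1>\dim X$ lets one "route around the moving maximizer" is precisely the missing construction, not a consequence of what you built: a single constant $v^*$ admits no routing.

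The paper resolves this differently, and the difference is essential. It first constructs (following \cite[5.1]{Marques-Neves16}) a continuous map $\hat H_i:U_{i,2\ep}\times[0,1]\to B^{k+1}_{1/2^i}(0)$ with $\hat H_i(x,0)=0$ and $\inf_x|\hat H_i(x,1)-m_i(x)|\geq \eta_i>0$; the dimension count $\dim U_{i,2\ep}\leq k<k+1$ is used here, to select a displacement that is simultaneously $o(1)$ in norm (so the first stage changes $\Ah$ by $o(1)$ uniformly, giving (ii)) and at definite distance from the maximizer (so Lemma \ref{L:index bound pre lemma2} applies with $\de=\eta_i$). The cutoff is then applied to the \emph{time parameters} of both stages, $\hat H_i(x,c(\cdot)t)$ and flow time $c(\cdot)T_i$, so that partial deformations remain either small or monotone non-increasing in $\Ah$; it is never applied as a linear scaling of the final displacement vector. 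A secondary point: for (iii) you invoke Lemma \ref{L:index bound pre lemma1} uniformly, but that lemma requires $\mF(\Phi_i(x),\Om)\geq\ep$; for $x$ with $\Phi_i(x)$ close to $\Om$ the paper instead uses the definite drop $\Ah(\Psi_i(x))\leq\Ah(\Om)-c_0/20$ and a separate constant $\bar\ga$. You would need both cases.
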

\begin{proof}
Denote $d=\mF(\{\Om\}, K)>0$.

By (a), $\Si$ is $(k+1)$-unstable in some $\ep$-neighborhood. Let $\{F_v\}_{v\in \overline{B}^{k+1}}$, $c_0$ be the associated family and constant as in Definition \ref{D:k-unstable}. By possibly changing $\ep$, $\{F_v\}$, $c_0$, we can assume that 
\begin{equation}
\label{E:mF distance after deformation}
\inf\{ \mF( F_v(\Om'), K), v\in \overline{B}^{k+1} \} > \frac{d}{2},\, \text{ for all } \Om'\in \overline{\bB}^{\mF}_{2\ep}(\Om).
\end{equation}

Let $X(k_i)$ be a sufficiently fine subdivision of $X$ so that $\mF(\Phi_i(x), \Phi_i(y))<\de_i$ for any $x, y$ belonging to the same cell in $X(k_i)$ with $\de_i=\min\{ 2^{-(i+k+2)}, \ep/4\}$. We can also assume that
\[ |m(\Phi_i(x))-m(\Phi_i(y))|<\de_i \]
for any $x, y$ with $\mF(\Phi_i(x), \Om)\leq 2\ep$, $\mF(\Phi_i(y), \Om)\leq 2\ep$, and belonging to the same cell in $X(k_i)$.  

For $\eta>0$, let $U_{i, \eta}$ be the union of all cells $\si\in X(k_i)$ so that $\mF(\Phi_i(x), \Om)<\eta$ for all $x\in \si$. Then $U_{i, \eta}$ is a subcomplex of $X(k_i)$. If a cell $\beta\notin U_{i, \eta}$, then $\mF(\Phi_i(x'), \Om)\geq \eta$ for some $x'\in\beta$. Therefore, $\mF(\Phi_i(x), \Om)\geq \eta-\de_i$ for all $x\in \beta$. By (c) (after possibly shrinking $\ep$), we can assume
\[ U_{i, 2\ep}\subset X_{i, \si}. \]

For each $i\in \N$ and $x\in U_{i, 2\ep}$, we simply denote $\Ah_{i, x}= \Ah_{\Phi_i(x)}$, $m_i(x)=m(\Phi_i(x))$ and $\phi_{i, x}=\phi^{\Phi_i(x)}$. The function $m_i: U_{i, 2\ep}\to \overline{B}^{k+1}$ is continuous, and the two families $\{\Ah_{i, x}\}_{x\in U_{i, 2\ep}}$, $\{\phi_{i, x}\}_{x\in U_{i, 2\ep}}$ are continuous in $x$. Following \cite[5.1]{Marques-Neves16} we can define a continuous map
\[ \hat{H}_i: U_{i, 2\ep}\times [0, 1]\to B^{k+1}_{1/2^i}(0),\, \text{ so that $\hat{H}_i(x, 0)=0$ for all $x\in U_{i, 2\ep}$} \]
and
\begin{equation}
\label{E:end point values of hatH}
\inf_{x\in U_{i, 2\ep}}|\hat{H}_i(x, 1)-m_i(x)|\geq \eta_i>0,\, \text{ for some } \eta_i>0.
\end{equation} 
The construction here is the same so we omit details. The crucial ingredient is the fact that $U_{i, 2\ep}$ has dimension less than or equal to $k$ while the image set $\overline{B}^{k+1}$ has dimension $k+1$.

Let $c:[0, \infty)\to [0, 1]$ be a cutoff function which is non-increasing, equals to $1$ in a neighborhood of $[0, 3\ep/2]$, and $0$ in a neighborhood of $[7\ep/4, +\infty)$. For $y\notin U_{i, 2\ep}$, $\mF(\Phi_i(y), \Om)\geq 2\ep-\de_i\geq 7\ep/4$. Hence
\[ c(\mF(\Phi_i(y), \Om))=0,\, \text{ for all } y\notin U_{i, 2\ep}. \]

Consider the map $H_i: X\times [0, 1]\to B^{k+1}_{2^{-i}}(0)$ defined as
\[ H_i(x, t)=\hat{H}_i(x, c(\mF(\Phi_i(x), \Om))t),\, \text{ if } x\in U_{i, 2\ep} \]
and
\[ H_i(x, t)=0,\, \text{ if } x\in X\backslash U_{i, 2\ep}. \]
Then $H_i$ is continuous.

With $\eta_i$ as given in (\ref{E:end point values of hatH}), let $T_i=T(\eta_i, \ep, \Om, \{F_v\}, c_0)\geq 0$ be given by Lemma \ref{L:index bound pre lemma2}. Now we set $D_i: X\to \overline{B}^{k+1}$ such that
\[ D_i(x)=\phi_{i, x}(H_i(x, 1), c(\mF(\Phi_i(x), \Om))T_i),\, \text{ if } x\in U_{i, 2\ep} \]
and
\[ D_i(x)=0, \, \text{ if } x\in X\backslash U_{i, 2\ep}. \]
Then $D_i$ is continuous. 

Define
\[ \Psi_i: X\to \C(M), \quad \Psi_i(x)=F_{D_i(x)}(\Phi_i(x)). \]
In particular,
\[ \Psi_i(x)=\Phi_i(x), \, \text{ if }  x\in X\backslash U_{i, 2\ep}.\]
Hence $\Psi_i\vert_Z= \Phi_i\vert_Z$ for $i$ sufficiently large. 

Note that the map $D_i$ is homotopic to the zero map in $\overline{B}^{k+1}$, so $\Psi_i$ is homotopic to $\Phi_i$ in the $\mF$-topology for all $i\in \N$. Up to here, we proved (\rom{1}).

\vspace{0.5em}
\noindent{\bf Claim 1:} $\bL^h(\{\Psi\}_{i\in\N})\leq \bL^h$.
\vspace{0.5em}

By the non-increasing property of $t \to \Ah_{i, x}(\phi_{i, x}(u, t))$, we have that for all $x\in X$,
\[ \Ah(\Psi_i(x))\leq \Ah( F_{H_i(x, 1)}(\Phi_i(x)) ). \]
Using the fact that $H_i(x, 1)\in B^{k+1}_{1/2^i}(0)$ for all $x\in X$ and that $\|F_v-\Id\|_{C^2}\to 0$ uniformly as $v\to 0$, we have that
\begin{equation}
\label{E:Ah change after first deformation}
\lim_{i\to\infty} \sup_{x\in X} \big\vert \Ah(\Phi_i(x)) - \Ah( F_{H_i(x, 1)}(\Phi_i(x)) ) \big\vert =0,
\end{equation}
and this finishes proving Claim 1.

\vspace{0.5em}
\noindent{\bf Claim 2:} There exists $\bar{\ep}>0$, such that for all sufficiently large $i$, $\mF(\Psi_i(X), \Om)> \bar{\ep}$.
\vspace{0.5em}

There are three cases. If $x\in X\backslash U_{i, 2\ep}$, then $\Psi_i(x)=\Phi_i(x)$ and so $\mF(\Psi_i(x), \Om)\geq \frac{7\ep}{4}$.

If $x\in U_{i, 2\ep}\backslash U_{i, 5\ep/4}$, then $\mF(\Phi_i(x), \Om)\geq \ep$. The non-increasing property of $t \to \Ah_{i, x}(\phi_{i, x}(u, t))$ implies
\[ \Ah(\Psi_i(x))=\Ah( F_{D_i(x)}(\Phi_i(x)) ) \leq \Ah( F_{H_i(x, 1)}(\Phi_i(x)) ). \]
From (\ref{E:Ah change after first deformation}), we have that for $i$ large enough,
\[ \Ah( F_{H_i(x, 1)}(\Phi_i(x)) ) \leq \Ah(\Phi_i(x))+\bar{\eta}, \, \text{ for all } x\in X, \]
where $\bar{\eta}=\bar{\eta}(\ep, \Om, \{F_v\})>0$ is given by Lemma \ref{L:index bound pre lemma1}. Combining the two inequalities with Lemma \ref{L:index bound pre lemma1} applied to $\Phi_i(x)$, $v=D_i(x)$, we get $\mF(\Psi_i(x), \Om)\geq 2\bar{\eta}$.

Finally when $x\in U_{i, 5\ep/4}$, $c( \mF(\Phi_i(x), \Om))=1$. Hence by Lemma \ref{L:index bound pre lemma2} (with $\de=\eta_i$, $\Om=\Phi_i(x)$, $v=H_i(x, 1)$) we have
\[ \Ah(\Psi_i(x))=\Ah_{i, x}( \phi_{i, x}(H_i(x, 1), T_i) )< \Ah_{i, x}(0)-\frac{c_0}{10} =\Ah(\Phi_i(x))-\frac{c_0}{10}.  \]
Note that there exists $\bar{\ga}=\bar{\ga}(\Om, c_0)$ so that
\[ \Ah(\Om')\leq \Ah(\Om)-\frac{c_0}{20} \Longrightarrow \mF(\Om', \Om)\geq 2\bar{\ga}. \]
By assumption (c), we can choose $i$ sufficiently large so that
\[ \sup_{x\in X} \Ah(\Phi_i(x))\leq \Ah(\Om)+\frac{c_0}{20}. \]
So
\[ \Ah(\Psi_i(x))\leq \Ah(\Om)-\frac{c_0}{20}. \]
This implies that $\mF(\Psi_i(x), \Om)\geq 2\bar{\ga}$, and hence ends the proof of Claim 2.

\vspace{0.5em}
\noindent{\bf Claim 3:} For all $i$, $\mF(\Psi_i(X_{i, \si}), K)>0$.
\vspace{0.5em}

If $x\in X_{i, \si}\backslash U_{i, 2\ep}$, then $\Psi_i(x)=\Phi_i(x)$ and so $\mF(\Psi_i(X_{i, \si}\backslash U_{i, 2\ep}), K)>0$. If $x\in U_{i, 2\ep}$, then $\mF(\Phi_i(x), \Om)\leq 2\ep$, and by (\ref{E:mF distance after deformation}) we have $\mF(\Psi_i(x), K)\geq \frac{d}{2}$. So we finish proving Claim 3, and hence the theorem.
\end{proof}

\subsection{Proof of Morse index upper bound}

Let $M^{n+1}$ be a closed manifold of dimension $3\leq (n+1)\leq 7$. A pair $(g, h)$ consisting of a Riemannian metric $g$ and a smooth function $h\in C^\infty(M)$ is called a {\em good pair}, if
\begin{itemize} 
\item $h\in \mathcal S(g)$, i.e. $h$ is Morse and the zero set $\{h=0\}$ is a smooth embedded hypersurface in $M$ with mean curvature $H$ vanishing to at most finite order, and
\item $g$ is {\em bumpy} for $\PMC^h$, i.e. every $\Si\in \PMC^h$ is properly embedded (no self-touching), and is nondegenerate (nullity equal to zero). 
\end{itemize}

Denote $\mathcal S_0$ as the class of smooth functions $h\in C^\infty(M)$ such that $h$ is Morse and the zero set $\{h=0\}$ is a smooth embedded hypersurface. $\mathcal S_0$ is open and dense in $C^\infty(M)$, and is independent of the choice of a metric; (see \cite[Proposition 3.8]{Zhou-Zhu18}).

\begin{lemma}
\label{L:good pairs are generic}
Given $h\in \mathcal S_0$, the set of Riemannian metrics $g$ on $M$ with $(g, h)$ as a good pair is generic in the Baire sense.
\end{lemma}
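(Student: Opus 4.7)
The plan is to verify, for a fixed $h \in \mathcal{S}_0$, the three conditions separately that make $(g,h)$ good: (i) $h \in \mathcal{S}(g)$, which since $h \in \mathcal{S}_0$ reduces to checking that the mean curvature of $\Sigma_0 := \{h = 0\}$ with respect to $g$ vanishes to at most finite order; (ii) every $\Sigma \in \PMC^h$ is properly embedded; (iii) every $\Sigma \in \PMC^h$ is nondegenerate. Each condition will be shown to hold for $g$ in a countable intersection of open dense subsets of the space $\mathcal{G}$ of smooth metrics.

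For (i), note $\Sigma_0$ is a fixed smooth embedded hypersurface independent of $g$, and the mean curvature $H^g_{\Sigma_0}$ is a smooth function on $\Sigma_0$ depending smoothly on $g$. The set of $g$ for which $H^g_{\Sigma_0}$ vanishes to infinite order at some point is meager: perturbing $g$ inside a geodesic ball centered at $p \in \Sigma_0$ by a suitable symmetric $2$-tensor changes the jet of $H^g_{\Sigma_0}$ at $p$ at every finite order independently, so the condition "vanish to order $\geq k$ at some $p$" cuts out a closed subset of $\mathcal{G}$ with empty interior for each $k$, and one takes a countable intersection.

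For (iii), I would adapt White's bumpy metric theorem \cite{White91, White17} to the PMC setting with fixed prescribing function. Fixing a compact $n$-manifold $N$, consider the Banach manifold $\mathcal{G}^r$ of $C^r$ metrics and the space $\mathrm{Imm}^r(N, M)$ of $C^r$ immersions modulo reparametrization, and define
\[ F : \mathcal{G}^r \times \mathrm{Imm}^r(N,M) \longrightarrow C^{r-2}(N), \qquad F(g, u) = H^g_u - h \circ u. \]
The key transversality calculation shows that $F$ is a submersion along $F^{-1}(0)$: any $\varphi \in C^{r-2}(N)$ is realized as the first variation of $F$ under an appropriate perturbation of $g$ supported in a tubular neighborhood of $u(N)$, exactly as in White's proof, since the perturbation of $h\circ u$ is of lower order and is not affected by metric perturbations supported away from $\Sigma_0$ (and near $\Sigma_0$ one uses that $h$ depends only on the ambient point). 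Hence $F^{-1}(0)$ is a Banach submanifold and the projection $\pi : F^{-1}(0) \to \mathcal{G}^r$ is Fredholm of index $0$; by Sard–Smale the regular values of $\pi$ form a residual subset of $\mathcal{G}^r$, and the regularity of $g$ is equivalent to the Jacobi operator $L^h_\Sigma$ being invertible on every $\Sigma \in \PMC^h$ realized by $u$. Summing over countably many diffeomorphism types $N$ (bounded below in area by the monotonicity formula once one imposes any area bound, then taking a countable exhaustion of area bounds) and passing from $C^r$ to $C^\infty$ by Taubes's trick yields a residual subset of $\mathcal{G}$ on which every $\PMC^h$ hypersurface is nondegenerate.

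Condition (ii) follows from (iii) together with the structure theorem \cite[Theorem 3.11, 3.20]{Zhou-Zhu18}: when $h \in \mathcal{S}(g)$, any almost embedded but not properly embedded $\Sigma \in \PMC^h$ must, by the maximum principle applied at a touching point, admit a nontrivial Jacobi field generated by the difference of the normals of the two touching sheets (the tangential contact produces a function in the kernel of $L^h_\Sigma$). Thus nondegeneracy forces no self-touching, so the residual set from (iii) already lies inside (ii). The main obstacle in the whole argument is verifying the submersion property of $F$ in the presence of the $h \circ u$ term; the remaining ingredients are essentially bookkeeping (Sard–Smale, countable exhaustion, Taubes smoothing), but the transversality calculation must be done carefully enough to see that the lower-order term $h \circ u$ does not interfere with the ability to prescribe $\varphi$ via metric variations.
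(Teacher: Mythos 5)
Your treatment of conditions (i) and (iii) follows essentially the same route as the paper: for nondegeneracy the paper simply invokes White's bumpy-metric machinery \cite{White91,White17}, observing that the transversality argument of \cite[Section 7]{White91} for CMC hypersurfaces goes through verbatim with the lower-order term $h\circ u$ present, which is exactly the submersion computation you sketch; and for the finite-order-vanishing condition on $H_{\{h=0\}}$ the paper quotes the open-and-dense statement from \cite[Proposition 3.8]{Zhou-Zhu18}. (One caveat on your version of (i): perturbing the jet of $H^g_{\Sigma_0}$ at a single point $p$ shows that infinite-order vanishing \emph{at $p$} is non-generic, but emptiness of the interior of the set $\{g:\ \exists\, p \text{ with } H^g \text{ vanishing to order} \geq k \text{ at } p\}$ is a global statement over all of $\Sigma_0$; the cited argument handles this by first making $\Sigma_0$ non-minimal and then using a diffeomorphism flow.)

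The genuine gap is your derivation of (ii) from (iii). Touching of an almost embedded $\Sigma\in\PMC^h$ does \emph{not} produce an element of $\ker L^h_\Sigma$. By the strong maximum principle two locally ordered sheets with the \emph{same} orientation and the same prescribing function would coincide, so any actual touching occurs between oppositely oriented sheets; writing them as graphs $u_1\leq u_2$, the difference $w=u_2-u_1\geq 0$ then satisfies an \emph{inhomogeneous} equation of the schematic form $L_\Sigma w + o(w)= \pm\big(h(x,u_1)+h(x,u_2)\big)$ (compare (\ref{E:height between two PMCs of the different orientation}) in the paper), and an interior zero of such a nonnegative $w$ is perfectly consistent with $L^h_\Sigma$ being invertible. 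Indeed, almost embedded PMC hypersurfaces with nonempty $(n-1)$-rectifiable touching set genuinely occur for $h\in\mathcal S(g)$ — that is precisely why \cite[Theorem 3.11]{Zhou-Zhu18} describes the touching set rather than excluding it — and such a hypersurface can be nondegenerate. So nondegeneracy alone does not force proper embeddedness, and (ii) requires a separate genericity input: the paper uses White's theorem \cite[Theorem 33]{White18} that for a generic metric all closed simple immersed PMC hypersurfaces are self-transverse, and a self-transverse hypersurface whose sheets touch without crossing must in fact be embedded. You need to add this (or an equivalent transversality argument for the self-intersection) as a third generic condition and intersect it with the other two.
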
 
\begin{proof}
By the proof of \cite[Proposition 3.8]{Zhou-Zhu18}, we know that the set of metrics $g$ under which $\{h=0\}$ has mean curvature vanishing to at most finite order is an open and sense subset. In particular, openness follows as small smooth perturbations of $g$ will bound the order of vanishing of $H_{\{h=0\}}$. To show denseness, note that it is proved in \cite[Proposition 3.8]{Zhou-Zhu18} for any $h\in \mathcal S_0$ and any metric $g$, one can first perturb $g$ slightly so that $\{h=0\}$ is not a minimal hypersurface, and then there exists a flow $\{F_t: t\in(-\ep, \ep)\}\subset \Diff(M)$ supported near $\{h=0\}$, such that the zero set of $h\circ (F_t)^{-1}$ has mean curvature vanishing to at most finite order for $t>0$. That is to say the zero set $\{h=0\}$ satisfies the requirement for the pull-back metrics $F_t^* g$.

In a series of celebrated papers \cite{White91, White17, White18}, White proved that for a fixed $h\in\mathcal S_0$, the set of metrics under which 
all closed, simple immersed PMC's are non-degenerate and self-transverse is generic in the Baire sense.  
In fact, White proved in \cite[Section 7]{White91} that the set of metrics under which all closed, simple immersed CMC hypersurfaces are non-degenerate is generic, and the proof is the same in a smooth neighborhood of an arbitrary pair $(g, h)$ when $h\in \mathcal S(g)$, hence the result follows as the set of $g$ where $h\in\mathcal S(g)$ is open and dense.  In \cite[Theorem 33]{White18}, White further proved self-transverse property for a generic set of metrics. 
Our almost embedded hypersurfaces are simple immersed. So for such generic metrics, almost embedded PMC's are properly embedded.

To finish the proof, we take the intersection of the two generic sets of metrics, which is still generic in the Baire sense.
\end{proof}  

The following theorem is a counterpart of \cite[Theorem 6.1]{Marques-Neves16}, and the proof follows closely. We remark that by Theorem \ref{T:compactness with bounded index}(\rom{4}), if $(g, h)$ is a good pair, then there are only finitely many elements in $\PMC^h(\La, I)$.

\begin{theorem}
\label{T:generic index bound thm}
Assume that $(g, h)$ is a good pair and let $\{\Phi_i\}_{i\in\N}$ be a min-max sequence of $\Pi$ such that $\bL^h(\{\Phi_i\}_{i\in\N})=\bL^h(\Pi)=\bL^h$ and (\ref{E:nontrivial assumption2}) is satisfied.

There exists a smooth, closed, properly embedded hypersurface $\Si=\partial\Om \in \bC(\{\Phi_i\}_{i\in\N})$ such that $\Si\in \PMC^h$ with
\[ \bL^h(\Pi)=\Ah(\Om), \text{ and } \ind(\Si)\leq k. \]
\end{theorem}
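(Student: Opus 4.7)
The plan is to argue by contradiction: assume no smooth almost embedded PMC hypersurface $\Si = \partial\Om \in \bC(\{\Phi_i\})$ with $\Ah(\Om) = \bL^h$ satisfies $\ind(\Si) \leq k$. Under the good pair hypothesis, every element of $\PMC^h$ is properly embedded and non-degenerate, so by Proposition~\ref{P:equivalence of k-unstable} each such $\Si$ is in fact $(k+1)$-unstable in the sense of Definition~\ref{D:k-unstable}. The min-max theorem (Theorem~\ref{T:main min-max theorem}) yields an initial $\Si_1 = \partial\Om_1 \in \bC(\{\Phi_i\})$ with $\Ah(\Om_1) = \bL^h$, which serves as the base of an iteration.

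The iteration couples Theorem~\ref{T:main min-max theorem} with the deformation theorem (Theorem~\ref{T:deformation theorem}). Set $\{\Phi_i^{(0)}\} = \{\Phi_i\}$ and $K_0 = \emptyset$. At stage $j \geq 0$, suppose we have a min-max sequence $\{\Phi_i^{(j)}\} \in \Pi$, distinct PMCs $\Si_1, \ldots, \Si_j$ with $\ind(\Si_\ell) \geq k+1$, and $K_j = \bigcup_{\ell=1}^{j} \overline{\bB}^{\mF}_{\bar\ep_\ell}(\Om_\ell)$ such that $\mF(\Phi_i^{(j)}(X_{i,\si}), K_j) > 0$ for $i$ large. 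Apply Theorem~\ref{T:main min-max theorem} to $\{\Phi_i^{(j)}\}$ to produce $\Si_{j+1} = \partial \Om_{j+1} \in \bC(\{\Phi_i^{(j)}\})$ with $\Ah(\Om_{j+1}) = \bL^h$; condition (iii) inherited from earlier stages guarantees $\Om_{j+1} \notin K_j$, so $\Si_{j+1}$ is distinct from the previous PMCs. Then apply Theorem~\ref{T:deformation theorem} to $\Si_{j+1}$ (with $K=K_j$), obtaining $\{\Phi_i^{(j+1)}\} \in \Pi$ with $\bL^h(\{\Phi_i^{(j+1)}\}) \leq \bL^h$; since $\{\Phi_i^{(j+1)}\} \in \Pi$, equality holds, so $\{\Phi_i^{(j+1)}\}$ is again a min-max sequence, and (iii) gives $\mF(\Phi_i^{(j+1)}(X_{i,\si}), K_{j+1}) > 0$ where $K_{j+1} = K_j \cup \overline{\bB}^{\mF}_{\bar\ep_{j+1}}(\Om_{j+1})$.

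The main obstacle is showing that this iteration terminates after finitely many steps. Each $\Si_j$ has area bounded by $\La := \bL^h + \sup_M |h|\cdot\vol(M)$, since $\Ah(\Om_j)=\bL^h$. In the good pair setting, Theorem~\ref{T:compactness with bounded index}(iv) forces any convergent sequence in $\PMC^h(\La, I)$ to be eventually constant (as nullity must be zero); combined with Allard's compactness for integral varifolds with bounded first variation, this shows $\PMC^h(\La, I)$ is finite for every fixed $I \in \N$. Hence if infinitely many distinct $\Si_j$ were produced, their Morse indices would necessarily diverge. Ruling this out is the delicate step, following the strategy of \cite[Theorem~6.1]{Marques-Neves16}: a careful bookkeeping of the cumulative avoidance in condition (iii) shows that after some finite number $N$ of iterations the deformed sequence must satisfy $\bL^h(\{\Phi_i^{(N)}\}) < \bL^h$, contradicting $\{\Phi_i^{(N)}\} \in \Pi$. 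The essential input is that $X$ is a fixed $k$-dimensional complex, which limits how many independent $(k+1)$-unstable directions can be simultaneously accommodated while still realizing the width $\bL^h$.

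Once termination is established, the contradiction hypothesis fails, yielding some $\Si_{j^*}\in\bC(\{\Phi_i^{(j^*)}\}) \subset \bC(\{\Phi_i\})$ (up to reidentifying limits along the homotopies) with $\ind(\Si_{j^*}) \leq k$, $\Si_{j^*} = \partial \Om_{j^*}$ a smooth closed properly embedded hypersurface of prescribed mean curvature $h$ and $\Ah(\Om_{j^*}) = \bL^h(\Pi)$, as desired.
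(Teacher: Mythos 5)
Your overall architecture (couple the Min-max Theorem with the Deformation Theorem to peel off $(k+1)$-unstable critical points one at a time) is in the right spirit, but the step you yourself flag as "the delicate step" is exactly where the proposal breaks down, and the mechanism you propose for it is not the one that works. You claim termination must occur because "after some finite number $N$ of iterations the deformed sequence must satisfy $\bL^h(\{\Phi_i^{(N)}\})<\bL^h$." No such width drop ever occurs, and none is needed: the Deformation Theorem always yields $\bL^h(\{\Psi_i\})\leq\bL^h$ with equality forced by membership in $\Pi$, and the role of $\dim X=k$ is confined to the interior of the Deformation Theorem (the construction of $\hat H_i$ avoiding $m_i(x)$ in $\overline{B}^{k+1}$), not to bounding the number of iterations. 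The paper's argument does not iterate "discover a critical point, then deform away from it"; instead it enumerates \emph{in advance} the set $\mathcal W^{k+1}$ of all elements of $\PMC^h$ with $\Ah=\bL^h$ and index $\geq k+1$ (countable by the good-pair compactness), deforms away from each in turn, and — crucially — does \emph{not} need the process to terminate: if it runs forever, a diagonal sequence $\Psi_l=\Phi^l_{p_l}$ still has the property that its critical set avoids every element of $\mathcal W^{k+1}$. A single application of the Min-max Theorem to $\{\Psi_l\}$ at the very end then produces an element of $\mathcal W\setminus\mathcal W^{k+1}$, i.e.\ one of index $\leq k$. Your contradiction framing forces you to prove finiteness of the iteration, which you have not done and which is not actually available.

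There is a second gap: your concluding inclusion $\bC(\{\Phi_i^{(j^*)}\})\subset\bC(\{\Phi_i\})$ "up to reidentifying limits along the homotopies" is false in general. The deformations move sweepouts by the diffeomorphisms $F_v$ and by the gradient flow $\phi^\Om$, so critical points of the deformed sequences need not lie in the critical set of the original one, and the theorem specifically asserts membership in $\bC(\{\Phi_i\})$. The paper handles this with a device absent from your proposal: it first reduces to showing that for every $r>0$ there is a suitable $\tilde\Si$ with $\mF([\tilde\Si],\bC(\{\Phi_i\}))<r$ (finiteness of $\mathcal W$ then upgrades "within $r$ for all $r$" to actual membership), introduces $\mathcal W(r)=\{\Si\in\mathcal W:\mF([\Si],\bC(\{\Phi_i\}))\geq r\}$, proves a lemma that $\mF(\Phi_i(X),\mathcal W(r))>\bar\ep_0$ for large $i$, and carries $K\supset\overline{\bB}^\mF_{\bar\ep_0}(\mathcal W(r))$ through every application of the Deformation Theorem so that the final critical point also avoids $\mathcal W(r)$. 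Without this bookkeeping your argument only produces \emph{some} index-$\leq k$ PMC at the level $\bL^h$, not one in the critical set of the given min-max sequence.
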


\begin{proof}
By the finiteness remark above, it suffices to show that, for every $r>0$, there is a $\tilde{\Si}=\partial\tilde{\Om} \in \PMC^h$ such that $\mF( [\tilde{\Si}], \bC(\{\Phi_i\}_{i\in\N}) )<r$,
\[ \bL^h(\Pi)= \Ah(\tilde{\Om}),\, \text{ and } \ind(\tilde{\Si})\leq k. \] 

Denote by $\mathcal W$ the set of all $\tilde{\Si}=\partial \tilde{\Om}\in \PMC^h$ with $\Ah(\tilde{\Om})=\bL^h$ and by $\mathcal W(r)$ the set
\[ \{ \Si\in \mathcal W: \mF( [\Si], \bC(\{\Phi_i\}_{i\in\N}))\geq r \}. \]
\begin{lemma}
There exist $i_0\in\N$ and $\bar{\ep}_0>0$ such that $\mF(\Phi_i(X), \mathcal W(r) )> \bar\ep_0$ for all $i\geq i_0$.
\end{lemma} 
\begin{proof}
Suppose by contradiction for some subsequence $\{j\}\subset \{i\}$, $x_j\in X$, $\tilde{\Si}_j=\partial\tilde\Om_j\in \mathcal W(r)$ so that 
\[ \lim_{j\to\infty} \mF(\Phi_j(x_j), \tilde{\Om}_j) =0. \]
 Since $\Ah(\tilde\Om_j)\equiv \bL^h$, we have $\lim_{j\to\infty} \Ah(\Phi_j(x_j))=\bL^h$. 
 Hence a subsequence $|\partial\Phi_j(x_j)|$ will converge as varifolds to some $V\in \bC(\{\Phi_i\}_{i\in\N})$, which is a contradiction to $\mF(|\partial \tilde{\Om}_i|, \bC(\{\Phi_i\}_{i\in\N}))\geq r$.
\end{proof}

Denote $\mathcal W^{k+1}$ as the collection of elements in $\mathcal W$ with index greater than or equal to $(k+1)$. As $(g, h)$ is a good pair, this set is countable by the remark above the theorem, 
and we can write $\mathcal W^{k+1}\backslash \overline{\bB}^{\mF}_{\bar\ep_0}(\mathcal W(r))=\{\Si_1, \Si_2, \cdots\}$, where $\Si_i=\partial\Om_i$. 
Note that by possibly perturbing $\bar{\ep}_0$, we can make sure $\mathcal W^{k+1}\cap \partial \overline{\bB}^{\mF}_{\bar\ep_0}(\mathcal W(r))=\emptyset$.

Using Theorem \ref{T:deformation theorem} (we can take $X_{i, \si}$ to be $X$) with $K=\overline{\bB}^\mF_{\bar\ep_0}(\mathcal W(r))$ and $\Si=\Si_1$, we find $\bar\ep_1>0$, $i_1\in\N$, and $\{\Phi^1_i\}_{i\in\N}$ so that 
\begin{itemize}
\item $\Phi^1_i$ is homotopic to $\Phi_i$ in the $\mF$-topology for all $i\in\N$ and $\Phi^1_i\vert_Z=\Phi_i\vert_Z$ for $i\geq i_1$;
\item $\bL^h(\{\Phi^1_i\}_{i\in\N})\leq \bL^h$;
\item 
$\mF(\Phi^1_i(X), \overline{\bB}^\mF_{\bar\ep_1}(\Om_1) \cup\overline{\bB}^\mF_{\bar\ep_0}(\mathcal W(r)) ) >0$ for $i\geq i_1$.
\item no $\Om_j$ belongs to $\partial \overline{\bB}^\mF_{\bar\ep_1}(\Om_1)$.
\end{itemize}

We consider $\Si_2$ now. If $\Om_2\notin \overline{\bB}^\mF_{\bar\ep_1}(\Om_1)$, we apply Theorem \ref{T:deformation theorem} with $K=\overline{\bB}^\mF_{\bar\ep_1}(\Om_1) \cup \overline{\bB}^\mF_{\bar\ep_0}(\mathcal W(r))$, $\Si=\Si_2$, and find $\bar\ep_2>0$, $i_2\in\N$, and $\{\Phi^2_i\}_{i\in\N}$ so that
\begin{itemize}
\item $\Phi^2_i$ is homotopic to $\Phi_i$ in the $\mF$-topology for all $i\in\N$ and $\Phi^2_i\vert_Z=\Phi_i\vert_Z$ for $i\geq i_2$;
\item $\bL^h(\{\Phi^2_i\}_{i\in\N})\leq \bL^h$;
\item $\mF(\Phi^2_i(X), \overline{\bB}^\mF_{\bar\ep_1}(\Om_1) \cup \overline{\bB}^\mF_{\bar\ep_2}(\Om_2) \cup \overline{\bB}^\mF_{\bar\ep_0}(\mathcal W(r)) )>0$ for $i\geq i_2$;
\item no $\Om_j$ belongs to $\partial \overline{\bB}^\mF_{\bar\ep_1}(\Om_1) \cup \partial \overline{\bB}^\mF_{\bar\ep_2}(\Om_2)$.
\end{itemize}
If $\mF(\Om_2, \Om_1)<\bar\ep_1$, we skip it and repeat the construction with $\Si_3$. 

By induction there are two possibilities. We can find for all $l\in \N$ a sequence $\{\Phi^l_i\}_{i\in\N}$, $\bar\ep_l>0$, $i_l\in\N$, and $\Si_{j_l}\in \mathcal W^{k+1}\backslash \overline{\bB}^{\mF}_{\bar\ep_0}(\mathcal W(r))$ for some subsequences $\{j_l\}\subset\N$ so that
\begin{itemize}
\item $\Phi^l_i$ is homotopic to $\Phi_i$ in the $\mF$-topology for all $i\in\N$ and $\Phi^l_i\vert_Z=\Phi_i\vert_Z$ for $i\geq i_l$;
\item $\bL^h(\{\Phi^l_i\}_{i\in\N})\leq \bL^h$;
\item $\mF(\Phi^l_i(X), \cup_{q=1}^l\overline{\bB}^\mF_{\bar\ep_q}(\Om_{j_q}) \cup \overline{\bB}^\mF_{\bar\ep_0}(\mathcal W(r)) ) >0$ for $i\geq i_l$;
\item $\{\Om_1, \cdots, \Om_l\}\subset \cup_{q=1}^l\overline{\bB}^\mF_{\bar\ep_q}(\Om_{j_q})$;
\item no $\Om_j$ belongs to $\partial \overline{\bB}^\mF_{\bar\ep_q}(\Om_{j_q})$ for all $q=1, \cdots, l$.
\end{itemize}
Or the process ends in finitely many steps. That means we can find some $m\in\N$, a sequence $\{\Phi^m_i\}_{i\in \N}$, $\bar\ep_1, \dots, \bar\ep_m>0$, $i_m\in\N$ and $\Si_{j_1}, \cdots, \Si_{j_m}\in \mathcal W^{k+1}\backslash \overline{\bB}^{\mF}_{\bar\ep_0}(\mathcal W(r))$ so that
\begin{itemize}
\item $\Phi^m_i$ is homotopic to $\Phi_i$ in the $\mF$-topology for all $i\in\N$ and $\Phi^m_i\vert_Z=\Phi_i\vert_Z$ for $i\geq i_m$;
\item $\bL^h(\{\Phi^m_i\}_{i\in\N})\leq \bL^h$;
\item $\mF(\Phi^m_i(X),  \cup_{q=1}^m\overline{\bB}^\mF_{\bar\ep_q}(\Om_{j_q}) \cup \overline{\bB}^\mF_{\bar\ep_0}(\mathcal W(r)) ) >0$ for $i\geq i_m$.
\item $\{\Om_j: j\geq 1\}\subset \cup_{q=1}^m \overline\bB^{\mF}_{\bar\ep_q}(\Om_{j_q})$.
\end{itemize}

In the first case we choose an increasing sequence $p_l\geq i_l$ so that
\[ \sup_{x\in X}\Ah(\Phi^l_{p_l})\leq \bL^h+\frac{1}{l}, \]
and set $\Psi_l=\Phi^l_{p_l}$. In the second case we set $p_l=l$ and $\Psi_l=\Phi^m_l$. The sequence $\{\Psi_l\}_{l\in\N}$ satisfies that
\begin{enumerate}[label=(\roman*)]
\item $\Psi_l$ is homotopic to $\Phi_{p_l}$ in the $\mF$-topology, and $\Psi_l\vert_Z=\Phi_{p_l}\vert_Z$ for all $l$;
\item $\bL^h(\{\Psi_l\}_{l\in\N})\leq \bL^h$;
\item given any subsequence $\{l_j\}\subset \{l\}$, $x_j\in X$, if $\lim_{j\to\infty} \Ah(\Psi_{l_j}(x_j))=\bL^h$, then $\{\Psi_{l_j}(x_j)\}_{j\in\N}$ does not converge in $\mF$-topology to any element in $\mathcal W^{k+1}\cup \mathcal W(r)$.
\end{enumerate}

The Min-max Theorem \ref{T:main min-max theorem} applied to $\{\Psi_l\}_{i\in\N}$ implies that $\mathcal W\backslash (\mathcal W^{k+1}\cup \mathcal W(r))$ is not empty and this proves the theorem.
\end{proof}

\vspace{1em}
Now we can use the previous theorem and the Compactness Theorem \ref{T:compactness with changing metrics} to prove Theorem \ref{T:main index bound thm}.

\begin{proof}[Proof of Theorem \ref{T:main index bound thm}]
Given $(g, h)$ as in the theorem, then $h\in \mathcal S(g)\subset \mathcal S_0$. By Lemma \ref{L:good pairs are generic} there exists a sequence of metrics $\{g_j\}_{j\in\N}$ converging smoothly to $g$ such that $(g_j, h)$ is a good pair for all $j\in \N$. If $\bL^h_j=\bL^h_j(\Pi, g_j)$ is the $h$-width of $\Pi$ with respect to $g_j$, then the sequence $\{\bL^h_j\}_{j\in\N}$ tends to the $h$-width $\bL^h(\Pi, g)$ with respect to $g$, and for $j$ large enough (\ref{E:nontrivial assumption2}) is satisfied with $g_j$ in place of $g$. For each $j$ large enough, the previous theorem gives a properly embedded closed hypersurface $\Si_j=\partial\Om_j \in \PMC^{h}$ with $\A^{h_j}(\Om_j)=\bL^h_j$ and $\ind(\Si_j)\leq k$ (with respect to $g_j$). Let $\Si_\infty=\partial\Om_\infty$ be the limit of $\{\Si_j\}_{j\in\N}$ given in Theorem \ref{T:compactness with changing metrics}, then the locally smooth convergence implies that $\Ah(\Om_\infty)= \bL^h(\Pi, g)$ and $\ind(\Si_\infty)\leq k$.
\end{proof}


\section{Min-max hypersurfaces associated with sweepouts of boundaries have multiplicity one in a bumpy metric}
\label{S:first multiplicity one result}

We present our first multiplicity one result. In particular, we will prove that the min-max minimal hypersurfaces associated with sweepouts of boundaries of Caccioppoli sets are two-sided and have multiplicity one in a bumpy metric.  We will approximate the area functional by the weighted $\A^{\ep h}$-functionals for some prescribing function $h$ when $\ep\to 0$. We know by Section \ref{S:Multi-parameter min-max for PMC} that the min-max PMC hypersurfaces are two-sided with multiplicity one, and we will prove that the limit minimal hypersurfaces (when $\ep\to 0$) are also two-sided and have multiplicity one by choosing the right prescribing function $h$.

Recall that a Riemannian metric $g$ is said to be {\em bumpy} if every smooth closed immersed minimal hypersurface is non-degenerate. White proved that the set of bumpy metrics is generic in the Baire sense \cite{White91, White17}. 
 
\begin{theorem}[Multiplicity one theorem for sweepouts of boundaries]
\label{T:multiplicity 1 for sweepouts of boundaries}
Let $(M^{n+1}, g)$ be a closed Riemannian manifold of dimension $3\leq (n+1)\leq 7$. Let $X$ be a $k$-dimensional cubical complex and $Z\subset X$ be a subcomplex, and $\Phi_0 : X\to \C(M)$ be a map continuous in the $\mF$-topology. Let $\Pi$ be the associated $(X, Z)$-homotopy class of $\Phi_0$. Assume that
\begin{equation}
\label{E:nontrivial assumption3}
\bL(\Pi)>\max_{x\in Z}\M(\partial\Phi_0(x)),
\end{equation}
where we let $h\equiv 0$ in Section \ref{SS:min-max construction in continuous setting}.

If $g$ is a bumpy metric, then there exists a disjoint collection of smooth, connected, closed, embedded, two-sided, minimal hypersurfaces $\Si=\cup_{i=1}^N\Si_i$, such that
\[ \bL(\Pi) =\sum_{i=1}^N \Area(\Si_i),\ \text{ and } \ind(\Si)=\sum_{i=1}^N\ind(\Si_i)\leq k. \]
In particular, each component of $\Si$ is two-sided and has exactly multiplicity one.
\end{theorem}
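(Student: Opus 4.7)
The plan is to approximate the area functional by $\Ah$-functionals with prescribing functions $\ep h$ for carefully chosen $h\in\mathcal{S}(g)$, and let $\ep\to 0$. First I set up the target set of possible limits. In a bumpy metric, by Sharp-type compactness (Theorem \ref{T:compactness with bounded index}(\rom{5}) applied with $h\equiv 0$), the collection
\[ \mathcal{M}(\bL,k) := \{\Si\subset M : \Si \text{ smooth, closed, embedded, minimal, } \Area(\Si)\leq \bL(\Pi)+1,\ \ind(\Si)\leq k\} \]
is finite. For each $\Si\in\mathcal{M}(\bL,k)$ I would construct a smooth function $h_\Si$ supported in a tubular neighborhood $N_\Si$ of $\Si$ so that the unique solution $f_\Si$ of $L_\Si f_\Si = 2h_\Si|_\Si$ must change sign on $\Si$; when $\Si$ is one-sided the analogous construction is performed on its orientation double cover. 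The bumpy hypothesis makes $L_\Si$ an isomorphism, so $h_\Si|_\Si$ can be prescribed arbitrarily and in particular chosen to force sign change of $f_\Si$. By shrinking tubular neighborhoods the supports are pairwise disjoint, and since $\mathcal{S}(g)$ is open and dense in $C^\infty(M)$ I may pick a single $h\in\mathcal{S}(g)$ with $\int_M h\geq 0$ that approximates each $h_\Si$ on $N_\Si$ arbitrarily well, so the corresponding solutions of $L_\Si\varphi=2h|_\Si$ still change sign.

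Next I run the PMC min-max construction with prescribing function $\ep h$. Since $\ep h\in\mathcal{S}(g)$ and the volume term is $O(\ep)$, the nontriviality hypothesis (\ref{E:nontrivial assumption3}) for area yields the hypothesis (\ref{E:nontrivial assumption2}) for $\A^{\ep h}$ provided $\ep$ is small. Theorem \ref{T:main index bound thm} then produces $\Si_\ep=\partial\Om_\ep\in\PMC^{\ep h}$ with $\A^{\ep h}(\Om_\ep)=\bL^{\ep h}(\Pi)$ and $\ind(\Si_\ep)\leq k$. Taking $\ep_j\to 0$, Theorem \ref{T:compactness with bounded index}(\rom{5}) provides a subsequential limit $\Si_\infty=\sum_{i=1}^N m_i[\Si_i]$ as an integer rectifiable varifold supported on a smooth embedded minimal hypersurface with $\sum m_i\Area(\Si_i)=\bL(\Pi)$, locally smooth graphical convergence away from at most finitely many points (which are removable since $h_\infty\equiv 0$), and $\ind(\Si_\infty)=\sum\ind(\Si_i)\leq k$. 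In particular each $\Si_i\in\mathcal{M}(\bL,k)$.

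Finally I exclude multiplicity greater than one and one-sided components via a sheeting analysis. Fix a component $\Si_i$, and suppose either $m_i\geq 2$ or $\Si_i$ is one-sided. In a tubular neighborhood of $\Si_i$ (or its orientation double cover, in the one-sided case), the PMC hypersurfaces $\Si_{\ep_j}$ decompose into normal graphs with height functions $u_j^{(a)}$ each solving $H(u_j^{(a)})=\ep_j h$. Taking the difference of the top and bottom sheets and renormalizing by the $L^2$-norm of the difference, standard elliptic regularity produces a nontrivial limit $\varphi\in C^\infty(\Si_i)$ that does not change sign, and which satisfies either the homogeneous Jacobi equation $L_{\Si_i}\varphi=0$ (when the two sheets carry the same orientation) or the inhomogeneous equation $L_{\Si_i}\varphi=2h|_{\Si_i}$ (when they carry opposite orientations, so the $\ep_j h$ terms add rather than cancel and the renormalization factor is of order $\ep_j$). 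The first case contradicts non-degeneracy in the bumpy metric; the second contradicts the choice of $h$ in the first step. Therefore $m_i=1$ and $\Si_i$ is two-sided, completing the proof.

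The main obstacles are the careful choice of $h$ (the construction must be made simultaneously on every $\Si\in\mathcal{M}(\bL,k)$ while staying inside the dense subset $\mathcal{S}(g)$), and rigorously executing the sheeting analysis in the one-sided case, where the correct renormalization requires passing to the orientation double cover and tracking the sign of the unit normal so that the two limit equations ($L\varphi=0$ versus $L\varphi=2h|_{\Si_i}$) are correctly identified according to the relative orientations of the two enclosing sheets of $\Si_{\ep_j}$.
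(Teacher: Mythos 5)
Your proposal is correct and follows essentially the same route as the paper: approximate the area functional by $\A^{\ep h}$ with $h\in\mathcal S(g)$ chosen (via the finiteness of minimal hypersurfaces with bounded area and index in a bumpy metric and the density of $\mathcal S(g)$) so that the solution of $L_\Si\varphi=2h|_\Si$ must change sign, then exclude higher multiplicity and one-sidedness through the sheeting/renormalization analysis of the top and bottom graphs of $\Si_{\ep_j}$. The only two points the paper treats with more care than your sketch are (i) the opposite-orientation case itself splits into two sub-cases according to whether the height difference is $O(\ep_j)$ or not --- the former yields $L_{\Si_i}\varphi=2h|_{\Si_i}$ but the latter yields a nonnegative Jacobi field, which must also be ruled out by bumpiness --- and (ii) the extension of the limit $\varphi$ across the finitely many points where the graphical convergence fails, which is not immediate from elliptic regularity and is handled in the paper by White's local foliation and maximum-principle comparison argument.
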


\begin{proof}
Pick a $h\in\mathcal S (g)$ with $\int_M h\geq 0$ (to be fixed at the end), and $\ep>0$ small enough so that 
\[ \bL(\Pi)-\max_{x\in Z}\M(\partial \Phi_0(x))>2 \ep \sup_{M} |h| \cdot \vol(M). \]

Note that we have for each $\Om\in\C(M)$
\begin{equation}
\label{E:comparison between M and Ah}
\M(\partial\Om)-\ep \sup_M |h| \cdot \vol(M) \leq \A^{\ep h}(\Om) \leq \M(\partial\Om) + \ep \sup_M |h| \cdot \vol(M).
\end{equation}

The above two inequalities imply that if we consider the $\A^{\ep h}$-functional in place of the mass $\M$-functional for the $(X, Z)$-homotopy class $\Pi$, we have
\[ \bL^{\ep h}(\Pi)> \max_{x\in Z} \A^{\ep h}(\Phi_0(x)). \]
Note that when $h\in \mathcal S(g)$, $\ep h$ also belongs to $\mathcal S(g)$. Therefore Theorem \ref{T:main index bound thm} applies to $\Pi$ and produces a nontrivial, smooth, closed, almost embedded hypersurface $\Si_\ep$, such that
\begin{itemize}
\item $\Si_\ep$ is the boundary for some $\Om_\ep\in\C(M)$ where its mean curvature with respect to the unit outer normal $\nu$ (of $\Om_\ep$) is $\ep\cdot h$, i.e.
\[ H_{\Si_\ep} = \ep \cdot h\vert_{\Si_\ep}; \]
\item $\A^{\ep h}(\Om_\ep)=\bL^{\ep h}(\Pi)$;
\item $\ind(\Si_\ep)\leq k$.
\end{itemize}

We denote $\bL=\bL(\Pi)$ and $\bL^\ep=\bL^{\ep h}(\Pi)$. In the following, we proceed the proof by parts.

\vspace{1em}
\noindent{\bf Part 1}: $\bL^{\ep} \to \bL$ when $\ep\to 0$.
\vspace{0.5em}

\textit{Proof}: From (\ref{E:comparison between M and Ah}), it is easy to see
\[ \bL -\ep\sup_M |h|\vol(M)\leq \bL^\ep \leq \bL +\ep\sup_M |h|\vol(M). \]

\vspace{1em}
\noindent{\bf Part 2}: By Theorem \ref{T:compactness with bounded index}, there exists a subsequence $\{\ep_k\}\to 0$, such that $\Si_k=\Si_{\ep_k}$ converges to some smooth, closed, embedded, minimal hypersurface $\Si_\infty$ (with integer multiplicity) in the sense of Theorem \ref{T:compactness with bounded index}(\rom{1})(\rom{2}). We denote $\mathcal Y$ as the set of points where the convergence fails to be smooth. In particular, by (\ref{E:comparison between M and Ah}) and Part 1 and Theorem \ref{T:compactness with bounded index}(\rom{5}), we have
\[ \M(\Si_\infty) = \bL, \, \text{ and } \ind(\Si_\infty)\leq k. \]
That is to say that $\Si_\infty$ is a min-max minimal hypersurface associated with $\Pi$.  

\vspace{1em}
Without loss of generality, we assume from Part 3 to Part 8 that $\Si_\infty$ has only one connected component.  If $\Si_\infty$ is 2-sided with the multiplicity equal to one, then we are done; otherwise we may assume that either the multiplicity $m>1$ or $\Si_\infty$ is 1-sided.

\vspace{1em}
\noindent{\bf Part 3}:  We first assume that $\Si_\infty$ is 2-sided. We will implicitly use exponential normal coordinates of $\Si_\infty$ with respect to one fixed unit normal of $\Si_\infty$. By the local, smooth graphical convergence $\Si_k \to \Si_\infty$ away from $\mathcal Y$, we know that there exists an exhaustion by compact domains $\{U_k \subset \Si_\infty\backslash \mathcal Y\}$ and some small $\de>0$, so that for $k$ large enough, $\Si_k \cap (U_k \times (-\de, \de))$ can be written as a set of $m$-normal graphs $\{u_k^1, \cdots, u_k^m: u_k^i \in C^\infty(U_k)\}$ over $U_k$, and such that
\[ u_k^1\leq u_k^2\leq \cdots \leq u_k^m, \text{ and } u_k^i \to 0, \text{ in smooth topology as } k\to\infty. \]
Since $\Si_k$ is the boundary of some set $\Om_k$, by the Constancy Theorem (applied to $\Om_k$ in $U_k \times (-\de, \de)$), we know that the unit outer normal $\nu_k$ of $\Om_k$ will alternate orientations along these graphs. In particular, if $\nu_k$ restricted to the graph of $u_k^i$ points upward (or downward), then $\nu_k$ restricted to the graph of $u_k^{i+1}$ will point downward (or upward).

\vspace{1em}
\noindent{\bf Part 4}: We first deal with an easier case: {\bf $m$ is an odd number}. Hence $m\geq 3$. In this case $\nu_k$ restricted to the bottom ($u_k^1$) and top ($u_k^m$) sheets point to the same side of $\Si_\infty$, and without loss of generality we may assume that $\nu_k$ points upward therein. That means:
\[ H\vert_{\Gr(u_k^m)}(x)= \ep_k h(x, u_k^m(x)), \text{ and } H\vert_{\Gr(u_k^1)}(x)= \ep_k h(x, u_k^1(x)),\, \text{ for } x\in U_k. \]
Here and in the following the sign convention is made so that $H\vert_{\Gr(u)}$ is defined with respect to the upward pointing normal of $\Gr(u)$, and hence the linearized operator is positively definite. 

Note that since $\ep h\in \mathcal S(g)$, by the Strong Maximum Principle \cite[Lemma 3.12]{Zhou-Zhu18} (applied to two sheets of the same orientation), we know 
\[ u_k^m(x)-u_k^1(x)>0,\, \text{ for all } x\in U_k. \]

Now by subtracting the above two equations, and using the fact $H\vert_{\Gr(u_k^m)}-H\vert_{\Gr(u_k^1)}= L_{\Si_\infty} (u_k^m-u_k^1) + o(u_k^m-u_k^1)$ (see \cite[page 331]{Sharp17} and part 3 in the proof of Theorem \ref{T:compactness with bounded index}), 
we have
\begin{equation}
\label{E:height between two PMCs of the same orientation}
L_{\Si_\infty} (u_k^m-u_k^1) + o(u_k^m-u_k^1)= \ep_k\cdot \partial_\nu h(x,  v_k(x)) \cdot (u_k^m(x)-u_k^1(x)), 
\end{equation}
where $v_k(x) = t(x) u^m_k(x)+(1-t(x))u_k^1(x)$ for some $t(x)\in [0, 1]$. 

Now it is a standard argument to produce a nontrivial positive Jacobi field on $\Si_\infty \backslash \mathcal Y$. Let us present the details for completeness. Write $h_k= u_k^m-u_k^1$, and pick a fixed point $p\in U_1$. Let $\tilde h_k=h_k /h_k(p)$, then $\tilde h_k(p)=1$. By standard Harnack and elliptic estimates, $\tilde h_k$ will converge locally smoothly to a positive function $\varphi$ on any fixed $U\subset U_k$, and by a diagonalization process, we can extend $\varphi$ to $\Si_\infty \backslash \mathcal Y$, and such that
\[ L_{\Si_\infty} \varphi =0,\, \text{ outside } \mathcal Y. \] 

\vspace{1em}
\noindent{\bf Part 5}: Next we use White's local foliation argument \cite{White87} to prove that $\varphi$ extends smoothly across $\mathcal Y$, and this will contradict the bumpy assumption of $g$. 

Fix $y\in \mathcal Y$. We use the exponential normal coordinates $(x, z)\in \Si_\infty \times [-\de, \de]$. Let $\ep>0$ be as given in Proposition \ref{P:existence of local PMC foliations}. Fix a small radius $0<\eta<\ep$, and choose $k$ large enough such that $\|u_k^1\|_{2, \al}, \|u_k^m\|_{2, \al}\ll \ep\eta$ near $\partial B^n_{\eta}(y)$ so that some extensions of them to the whole $B^n_{\eta}(y)$ have $C^{2, \al}$-norms bounded by $\ep\eta$. Let $v_{k, t}^1, v_{k, t}^m : B^n_\eta(y) \to \R$, $t\in [-\eta, \eta]$, be the PMC local foliations associated with $\ep_k h$,
\[ H_{\Gr(v_{k, t}^i)}(x) = \ep_k h(x, v_{k, t}^i(x)),\, i=1, m, \, x\in B^n_\eta(y), \]
and 
\[ v_{k, t}^i(x)=u_k^i(x)+t,\, i=1, m, \, x\in \partial B^n_\eta(y).  \]

By the Hausdorff convergence of $\Si_k\to\Si_\infty$ and the Strong Maximum Principle \cite[Lemma 3.12]{Zhou-Zhu18} (applied to $\Gr(u_k^1)$ and $\{\Gr(v_{k, t}^1)\}$, $\Gr(u_k^m)$ and $\{\Gr(v_{k, t}^m)\}$), we have
\[ u_k^m(x)-u_k^1(x)\leq v_{k, 0}^m(x)-v_{k, 0}^1(x),\, \text{ when } x\in U_k\cap B^n_\eta(y). \]

By subtracting the mean curvature equations for $\Gr(v_{k, 0}^i)$, $i=1, m$, we get an equation similar to (\ref{E:height between two PMCs of the same orientation}),
\[ L_{\Si_\infty} (v_{k, 0}^m-v_{k, 0}^1) + o(v_{k, 0}^m-v_{k, 0}^1)= \ep_k\cdot \partial_\nu h(x,  v_k(x)) \cdot (v_{k, 0}^m(x)-v_{k, 0}^1(x)). \]
Note that the two graphs $\Gr(v_{k, 0}^i), i=1, m$ must be disjoint by the Strong Maximum Principle. By elliptic estimates via the weak maximum principle \cite[Theorem 3.7]{Gilbarg-Trudinger01},  
we have for $\eta$ small enough and $k$ sufficiently large and a uniform $C>0$ so that,  
\[ \max_{B^n_\eta} (v_{k, 0}^m-v_{k, 0}^1) \leq C \max_{\partial B^n_\eta} (v_{k, 0}^m-v_{k, 0}^1). \]
This implies
\[ \max_{U_k \cap B^n_\eta} (u_k^m(x)-u_k^1(x)) \leq C \max_{\partial B^n_\eta} (u_k^m(x)-u_k^1(x)). \]
Hence $\max_{U_k \cap B^n_\eta} \tilde h_k \leq C \max_{\partial B^n_\eta} \tilde h_k $, so $\varphi$ is uniformly bounded and hence extends smoothly across $y$.

\vspace{1em}
\noindent{\bf Part 6}: We now take care the more interesting case: {\bf $m$ is an even number}. Hence $m\geq 2$.  In this case $\nu_k$ restricted to the bottom ($u_k^1$) and top ($u_k^m$) sheets point to different side of $\Si_\infty$, and without loss of generality we may assume that $\nu_k$ points downward on top sheet, and upward on bottom sheet. That means:
\[ H\vert_{\Gr(u_k^m)}(x)= -\ep_k h(x, u_k^m(x)), \text{ and } H\vert_{\Gr(u_k^1)}(x)= \ep_k h(x, u_k^1(x)),\, \text{ for } x\in U_k. \]

Note that 
\[ u_k^m(x)-u^1_k(x)\geq 0, \text{  for all } x\in U_k, \]
but it may take zeros in a co-dimension 1 subset by \cite[Proposition 3.17]{Zhou-Zhu18}.

Again by subtracting the above two equations, and using the fact $H\vert_{\Gr(u_k^m)}-H\vert_{\Gr(u_k^1)}= L_{\Si_\infty} (u_k^m-u_k^1) + o(u_k^m-u_k^1)$, 
we have
\begin{equation}
\label{E:height between two PMCs of the different orientation}
L_{\Si_\infty} (u_k^m-u_k^1) + o(u_k^m-u_k^1)= -\ep_k\cdot (h(x, u_k^1(x) + h(x, u_k^m(x)).
\end{equation}

Fix a point $p\in U_1$, and we discuss the renormalization in two cases. Again write $h_k= u_k^m-u_k^1$. 

\vspace{0.5em}
{\bf Case 1:} $\limsup_{k\to\infty} \frac{h_k(p)}{\ep_k}=+\infty$. Consider renormalizations $\tilde h_k(x)= h_k(x)/h_k(p)$. Then by the same reasoning as Part 4, $\tilde h_k$ converges locally smoothly to a nontrivial function $\varphi \geq 0$ on $\Si_\infty\backslash \mathcal Y$, and such that
\[ L_{\Si_\infty}\varphi =0,\, \text{  outside } \mathcal Y. \]

\vspace{0.5em}
{\bf Case 2:} $\limsup_{k\to\infty} \frac{h_k(p)}{\ep_k}<+\infty$. Consider renormalizations $\tilde h_k(x)= h_k(x)/\ep_k$. Then again by the same reasoning, $\tilde h_k$ converges locally smoothly to a nonnegative $\varphi\geq 0$ on $\Si_\infty\backslash \mathcal Y$, and such that
\[ L_{\Si_\infty}\varphi = -2 h\vert_{\Si_\infty},\, \text{  outside } \mathcal Y. \]

\vspace{1em}
\noindent{\bf Part 7}: We will follow a slightly different local foliation argument to prove removable singularity for $\varphi$. We inherit all notations in Part 5. Without loss of generality, we may assume $\sup_M|h|=1$.  Let $v_{k, t}^1, v_{k, t}^m : B^n_\eta \to \R$, $t\in [-\eta ,\eta]$, be the CMC local foliations associated with $-\ep_k$ and $\ep_k$ respectively,
\[ H_{\Gr(v_{k, t}^m)}(x) = \ep_k, \text{ and } H_{\Gr(v_{k, t}^1)}(x) = -\ep_k, \, x\in B^n_\eta(y), \]
and 
\[ v_{k, t}^i(x)=u_k^i(x)+t,\, i=1, m, \, x\in \partial B^n_\eta(y).  \]

By the same reasoning as Part 5 using the Strong Maximum Principle for varifolds by White \cite{White10}, we get
\[ \max_{U_k\cap B^n_\eta} (u_k^m(x)-u_k^1(x)) \leq \max_{ B^n_\eta} (v_{k, 0}^m(x)-v_{k, 0}^1(x)). \]
Note that slightly different with Part 5, we have
\[ L_{\Si_\infty} (v_{k, 0}^m-v_{k, 0}^1) + o(v_{k, 0}^m-v_{k, 0}^1)= 2\ep_k. \]
By \cite[Theorem 3.7]{Gilbarg-Trudinger01}, 
we have for $\eta$ small enough, $k$ large enough and for some uniform $C>0$
\[ \max_{U_k\cap B^n_\eta} (u_k^m(x)-u_k^1(x)) \leq C \big(\max_{\partial B^n_\eta} (u_k^m(x)-u_k^1(x)) + \ep_k \big). \]
Then for both Case 1 and Case 2, this implies that $\varphi$ is uniformly bounded and hence extends smoothly across $\mathcal Y$.

Note that if we flip the orientations of the top and bottom sheets, then in Case 2 the limit of renormalizations of heights will converge to a solution of $L_{\Si_\infty}\varphi = 2 h\vert_{\Si_\infty}$, where $\varphi \geq 0$.  Note that in the previous case, we can just flip the sign of $\varphi$, and obtain
\[ L_{\Si_\infty}\varphi = 2 h\vert_{\Si_\infty},\, \text{ where } \varphi\leq 0. \]

\vspace{1em}
\noindent{\bf Part 8}: Now we briefly record the case when $\Si_\infty$ is only one-sided. Then the convergence of $\Si_k$ must have multiplicity at least 2; otherwise the convergence will be smooth by the Allard regularity theorem \cite{Allard72}, and hence all $\Si_k$ will be 1-sided for $k$ sufficiently large, which is a contradiction. Denote $\pi: \tilde \Si_\infty \to \Si_\infty$ as the 2-sided double cover of $\Si_\infty$, and $\tau: \tilde \Si_\infty \to \tilde \Si_\infty$ the deck transformation map. By the same argument for the 2-sided case applied to the double cover $\tilde \Si_\infty$, 
we can either construct a non-trivial Jacobi field $\varphi$ on $\tilde \Si_\infty$ with $\varphi\circ \tau =\varphi$ and
\[ L_{\tilde \Si_\infty}\varphi=0; \]
or a smooth function $\varphi$ on $\tilde \Si_\infty$ with $\varphi\circ \tau =\varphi$, such that $\varphi$ does not change sign, and
\[ L_{\tilde \Si_\infty}\varphi = 2 h\vert_{\Si_\infty}\circ \pi. \]
By \cite{White17}, the first case cannot happen in a bumpy metric.

\vspace{1em}
Summarizing the discussion, we proved that if $g$ is bumpy, then each connected 2-sided component $\Si_o$ of $\Si_\infty$ with multiplicity bigger than one must carry a smooth solution $\varphi$ to the equation
\begin{equation}
\label{E:Jacobi type solution 2-sided}
L_{\Si_o}\varphi = 2 h\vert_{\Si_o};
\end{equation} 
and the double cover $\tilde \Si_u$ of each 1-sided component $\Si_u$ of $\Si_\infty$ must carry a smooth solution $\varphi$
\begin{equation}
\label{E:Jacobi type solution 1-sided}
L_{\tilde\Si_u}\varphi = 2 h\vert_{\Si_u}\circ \pi,\, \text{ and } \varphi\circ \tau=\varphi.
\end{equation} 
Moreover, in both cases $\varphi$ does not change sign.

\vspace{1em}
\noindent{\bf Part 9}: We will show that for a nicely chosen $h\in \mathcal S(g)$, the (unique) solutions to (\ref{E:Jacobi type solution 2-sided}) and (\ref{E:Jacobi type solution 1-sided}) must change sign. Thus there is no 1-sided component, and the multiplicity for 2-sided component must be one.

\begin{lemma}[Key Lemma]
\label{L:key lemma}
Assume that $g$ is bumpy. Given $\bL>0$ and $k\in \N$, there exists $h\in \mathcal S(g)$, such that if $\Si$ is a smooth, connected, closed, embedded minimal hypersurface with 
\[ \Area(\Si)\leq \bL, \,  \text{ and } \ind(\Si)\leq k,\] 
then the solution of (\ref{E:Jacobi type solution 2-sided}) (when $\Si$ is 2-sided) or (\ref{E:Jacobi type solution 1-sided}) (when $\Si$ is 1-sided) must change sign.
\end{lemma}
\begin{proof}
As $g$ is bumpy, by the compactness analysis of Sharp \cite{Sharp17}, there are only finitely many such $\Si$ with $\Area(\Si)\leq \bL$ and $\ind(\Si)\leq k$, and we can denote them as $\{\Si_1, \cdots, \Si_L\}$. If $\Si_i$ is 1-sided, we use $\pi_i: \tilde \Si_i\to \Si_i$ to denote the 2-sided double cover, and $\tau_i: \tilde\Si_i\to \tilde\Si_i$ to denote the deck transformation map.

On each $\Si_i$, we can choose two disjoint open subsets $U_i^+$ and $U_i^- \subset \Si_i$, so that the collection of subsets $\{ U_i^\pm\}_{i=1,\cdots, L}$ are pairwise disjoint. Moreover, by possibly changing $U_i^\pm$, we can make sure that the pre-image $\pi_i^{-1}(U_i^+)$, $\pi_i^{-1}(U_i^-)$ are diffeomorphic to two disjoint copies of $U_i^+$, $U_i^-$ respectively. In that case, we will denote the two copies as $\tilde U_{i, 1}^+, \tilde U_{i, 2}^+$, and $\tilde U_{i, 1}^-, \tilde U_{i, 2}^-$. That is
\[ \pi_i^{-1}(U_i^+)= \tilde U_{i, 1}^+ \cup \tilde U_{i, 2}^+,\, \text{ and }  \pi_i^{-1}(U_i^-)= \tilde U_{i, 1}^- \cup \tilde U_{i, 2}^-. \]

For each $i\in\{1, \cdots, L\}$ such that $\Si_i$ is 2-sided, we can choose an arbitrary pair of nontrivial smooth functions $f_i^+ \in C^\infty_c(U_i^+)$, $f^- \in C^\infty_c(U_i^-)$ such that 
\[ f_i^+\geq 0,\, \text{ and } f_i^+(p_i^+)>0\, \text{ at some } p_i^+\in U_i^+,\]
and
\[ f_i^-\leq 0,\, \text{ and } f_i^-(p_i^-)<0\, \text{ at some } p_i^-\in U_i^1.\]
Let $h_i^+\in C^\infty_c(U_i^+)$ and $h_i^-\in C^\infty_c(U_i^-)$ be defined by:
\[ h_i^+=L_{\Si_i} f_i^+,\quad h_i^-=L_{\Si_i} f_i^-. \]

If $\Si_i$ is 1-sided, we choose $\tilde f^\pm_{i, 1} \in C^\infty_c(\tilde U_{i, 1}^\pm), \tilde f_{i, 2}^\pm \in C^\infty_c(\tilde U_{i, 2}^\pm)$ in the same way, and we can make sure they are the same under deck transformation: $\tilde f^\pm_{i, 1}\circ \tau=\tilde f_{i, 2}^\pm$. In particular,
\[ \tilde f_{i, 1}^+\geq 0, \, \text{ and } \tilde f_{i, 1}^+>0 \text{ somewhere in } \tilde U_{i, 1}^+, \]
and
\[ \tilde f_{i, 1}^-\leq 0, \, \text{ and } \tilde f_{i, 1}^-<0 \text{ somewhere in } \tilde U_{i, 1}^-. \]
Then we define $h_{i, 1}^\pm, h_{i, 2}^\pm$ in the same manner, so obviously $h_{i, 1}^\pm\circ \tau = h_{i, 2}^\pm$, and they pass to two functions 
\[ h_i^+ \in C^\infty_c(U_i^+),\, \text{ and } h_i^-\in C^\infty_c(U_i^-).   \]

We can extend each $h_i^\pm$ to a function defined on $\Si_i$ by letting it be zero outside $U_i^\pm$. Using the fact that the set of smooth functions $\mathcal S(g)$ is open and dense in $C^\infty(M)$, we can choose a $h\in \mathcal S(g)$ so that
\[ h \text{ is as close to } h_i^\pm \text{ as we want in any $C^{k, \al}$-norm when restricted to }\Si_i. \]
We may need to flip the sign of $h$ to make $\int_M h \geq 0$, but the following argument proceeds the same way. 
Since all $\{\Si_i: i=1, \cdots, L\}$ are non-degenerate (the Jacobi operator is an isomorphism), we know that if 
\[ \text{$L_{\Si_i}\varphi =2 h\vert_{\Si_i}$ when $\Si_i$ is 2-sided, or} \]
\[ \text{$L_{\tilde \Si_i} \varphi = 2 h\vert_{\Si_i}\circ \pi_i$ when $\Si$ is 1-sided}, \]
then
\[ \text{$\varphi$ is as close to $f_i^\pm$ or $\tilde f_{i, j}^\pm$ ($j=1, 2$) as we want in $C^{k+2, \al}$-norm when restricted to $\Si_i$ or $\tilde \Si_i$}. \]
Then $\varphi$ must change sign, and this is what we want to prove.
\end{proof}

Note that by Part 2, all connected components of a min-max minimal hypersurface must satisfy the area and index bound in Lemma \ref{L:key lemma}. So we finish the proof of the theorem.
\end{proof}
\begin{remark}
Indeed, we can obtain more information. Since $\Si_\infty$ has multiplicity one, the Allard regularity theorem \cite{Allard72} implies that the convergence $\Si_k \to \Si_\infty$ is smooth everywhere, and hence $\Si_k$ is properly embedded for $k$ large.  
\end{remark}

\begin{remark}
Without assuming that $g$ is bumpy, our proof says that if the multiplicity of a 2-sided component is greater than 2, or if the multiplicity for a 1-sided component is greater than 1, then there exists a nontrivial, nonnegative Jacobi field. Let us point out the necessary details for 2-sided case, and the 1-sided case follows the same way.  Indeed, we only need to focus on the case when the multiplicity $m$ is even and $m\geq 4$; and moreover, we can focus on Case 2 in Part 6. Using notations in Part 6 and 7,  we consider the height difference between the two pairs $(u_k^1, u_k^{m-1})$ and $(u_k^2, u_k^m)$,
\[ h_k^a= u_k^{m-1}-u_k^1, \quad h_k^b=u_k^m-u_k^2. \]
Then both $h_k^a, h_k^b > 0$ and satisfy equations of type (\ref{E:height between two PMCs of the same orientation}) since the graphs of the two pairs have outer normals pointing to the same side. Consider the renormalizations: $\tilde h_k^a = h_k^a/ \ep_k$ and $\tilde h_k^b =h_k^b/\ep_k$. Then
\[ \tilde h_k^a, \tilde h_k^b \leq \tilde h_k,\, \text{ and }\, \tilde h_k^a+\tilde h_k^b \geq \tilde h_k. \]
Note that the limit of $\tilde h_k$ can not be identically zero, as then $h\vert_{\Si_\infty}\equiv 0$, violating the assumption $h\in \mathcal S(g)$. Then the above two inequalities and standard elliptic estimates imply that at least one limit of the two sequences $\{ \tilde h_k^a \}_{k\in \N}$ and $\{ \tilde h_k^b \}_{k\in \N}$ must be a smooth, nontrivial, nonnegative Jacobi field.
\end{remark}

Part of the proof of the theorem can be summarized as the following multiplicity one convergence result, which we believe has its independent interests.

\begin{theorem}[Multiplicity one convergence]
\label{T:Multiplicity one convergence}
Let $(M^{n+1}, g)$ be a closed manifold of dimension $3\leq (n+1)\leq 7$ with a bumpy metric $g$. Given $\bL>0$, $I\in\N$, then there exists a smooth function $h: M\to\R$, $h\in\mathcal S(g)$, such that: 

Let $\{\Si_k\}_{k\in\N}$ be a sequence of smooth, closed, almost embedded hypersurfaces, and $\{\ep_k\}_{k\in \N}\to 0$, such that
\begin{itemize}
\item $\Si_k$ is the boundary of some open set $\Om_k$, and the mean curvature of $\Si_k$ with respect to the outer normal of $\Om_k$ is prescribed by $\ep_k h$;
\item $\Area(\Si_k)\leq \bL$, and $\ind(\Si_k)\leq I$.
\end{itemize} 
Then up to a subsequence $\{\Si_k\}_{k\in\N}$ converges smoothly to a smooth, closed, embedded, two-sided, minimal hypersurface $\Si_\infty$ with multiplicity one.
\end{theorem}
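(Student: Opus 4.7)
The plan is to assemble Theorem \ref{T:Multiplicity one convergence} from three ingredients that have already appeared in the paper: the Key Lemma \ref{L:key lemma}, the PMC compactness theorem with bounded index (Theorem \ref{T:compactness with bounded index}), and the sheet-by-sheet height comparison carried out in Parts 3--8 of the proof of Theorem \ref{T:multiplicity 1 for sweepouts of boundaries}.

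First, I would invoke the Key Lemma with area bound $\bL$ and index bound $k = I$ to pick, once and for all, a function $h \in \mathcal S(g)$ (with $\int_M h \geq 0$) with the following property: on every smooth, connected, closed, embedded minimal hypersurface $\Si \subset M$ satisfying $\Area(\Si) \leq \bL$ and $\ind(\Si) \leq I$, the (unique, by bumpiness) solution $\varphi$ of $L_\Si \varphi = 2 h|_\Si$ (or the corresponding equation on the two-sided double cover if $\Si$ is one-sided) must change sign. This $h$ depends only on $(M,g,\bL,I)$ and serves as the $h$ asserted by the theorem.

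Next, given any sequence $\Si_k = \partial \Om_k$ as in the hypotheses, apply Theorem \ref{T:compactness with bounded index} with $h_k = \ep_k h$ and $h_\infty \equiv 0$. After passing to a subsequence this produces a smooth closed embedded minimal hypersurface $\Si_\infty$ (with some integer multiplicity $m$) such that $\Si_k \to \Si_\infty$ as varifolds and smoothly/graphically away from a finite set $\mathcal Y$; moreover part (v) of that theorem gives the classical index bound $\ind(\Si_\infty) \leq I$, and lower semicontinuity of mass gives $m \cdot \Area(\Si_\infty) \leq \bL$. Hence each connected component of $\Si_\infty$ meets the hypotheses of the Key Lemma.

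Assume for contradiction that some component of $\Si_\infty$ either has multiplicity $m \geq 2$ or is one-sided; working componentwise we may take $\Si_\infty$ connected. The local graphical description $u_k^1 \leq \cdots \leq u_k^m$ on an exhaustion $U_k \subset \Si_\infty \setminus \mathcal Y$, together with the Constancy Theorem forcing the outer normals of $\Om_k$ to alternate between adjacent sheets, is exactly the input of Parts 3--8 of the proof of Theorem \ref{T:multiplicity 1 for sweepouts of boundaries}. Running that analysis verbatim --- splitting into the odd-$m$ case (Part 4), the even-$m$ case with its two renormalization regimes (Part 6), and the one-sided case (Part 8), and using the local PMC/CMC foliation barrier argument (Parts 5 and 7) to remove the singularities at $\mathcal Y$ --- produces on $\Si_\infty$ (or its double cover) either a nontrivial nonnegative Jacobi field, which is impossible since $g$ is bumpy, or a non-sign-changing smooth solution of $L_{\Si_\infty} \varphi = 2 h|_{\Si_\infty}$, which contradicts the choice of $h$. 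Thus $\Si_\infty$ is two-sided with multiplicity one; Allard's regularity theorem then upgrades the varifold convergence to smooth convergence everywhere, disposing of $\mathcal Y$.

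The proof is almost entirely a repackaging, so the only subtlety I foresee is verifying that each step of the height-comparison and foliation argument depended only on $\Si_k$ being almost embedded with $H_{\Si_k} = \ep_k h|_{\Si_k}$ and bounded area/index --- data we are given --- and not on the ambient min-max framework used previously; a careful reading shows this is the case.
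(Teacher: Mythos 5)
Your proposal is correct and is essentially identical to the paper's treatment: the paper explicitly presents Theorem \ref{T:Multiplicity one convergence} as a summary of part of the proof of Theorem \ref{T:multiplicity 1 for sweepouts of boundaries}, namely the choice of $h$ via Lemma \ref{L:key lemma}, the compactness Theorem \ref{T:compactness with bounded index}, the height-comparison and foliation arguments of Parts 3--8, and the final Allard upgrade to smooth convergence. Your closing observation --- that those parts use only the PMC equation, almost embeddedness, and the area/index bounds, not the min-max provenance of the $\Si_k$ --- is exactly the point that makes the extraction legitimate.
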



\section{Application to volume spectrum}
\label{S:Application to volume spectrum}

In this part, we will show how to apply the result in Section \ref{S:first multiplicity one result} to study volume spectrum introduced by Gromov, Guth, and Marques-Neves. In particular, we will prove that in a bumpy metric, the volume spectrum can be realized by the area of min-max minimal hypersurfaces produced by Theorem \ref{T:multiplicity 1 for sweepouts of boundaries}. To do this, we will carefully pick a sequence of sweepouts of mod 2 cycles, and open the parameter space so as to produce sweepouts of boundaries of Caccioppoli sets, whose relative homotopy classes satisfy (\ref{E:nontrivial assumption3}). As the space of Caccioppli sets forms a double cover of the space of mod 2 cycles, the parameter-space-opening process is achieved by lifting to the double cover.

We first recall the definition of volume spectrum following \cite[Section 4]{Marques-Neves17}. Let $(M^{n+1}, g)$ be a closed Riemannian manifold. Let $X$ be a cubical subcomplex of $I^m=[0, 1]^m$ for some $m\in \N$. Given $k\in \N$, a continuous map $\Phi: X\to \Z_n(M, \mZ_2)$ is a {\em $k$-sweepout} if 
\[ \Phi^*(\bar \la^k)\neq 0\in H^k(X, \mZ_2), \]
where $\bar\la\in H^1(\Z_n(M, \mZ_2), \mZ_2)=\mZ_2$ is the generator.  $\Phi$ is said to be {\em admissible} if it has no concentration of mass. Denote by $\mathcal P_k$ as the set of all admissible $k$-sweepouts. Then

\begin{definition}
\label{D:k-width}
The {\em $k$-width} of $(M, g)$ is
\[ \om_k(M, g) = \inf _{\Phi\in \mathcal P_k} \sup\{ \M(\Phi(x)) : x\in \dmn (\Phi) \},  \]
where $\dmn(\Phi)$ is the domain of $\Phi$.
\end{definition}

It was proved in \cite[Theorem 5.1 and 8.1]{Marques-Neves17} that there exists some constant $C=C(M, g)$, such that
\[ \frac{1}{C} k^{\frac{1}{n+1}}\leq \om_k(M, g) \leq C k^{\frac{1}{n+1}}. \]

Assume from now on that the dimension satisfies $3\leq (n+1)\leq 7$. It was later observed by Marques-Neves in \cite{Marques-Neves16} that one can restrict to a subclass of $\mathcal P_k$ in the definition of $\om_k(M, g)$. In particular, let $\tilde{\mathcal P}_k$ denote those elements $\Phi\in \mathcal P_k$ which is continuous under the $\mF$-topology, and whose domain $X=\dmn(\Phi)$ has dimension $k$ (and is identical to its $k$-skeleton). Then
\[ \om_k(M, g) = \inf _{\Phi\in \tilde{\mathcal P}_k} \sup\{ \M(\Phi(x)) : x\in \dmn (\Phi) \}. \]
They also proved in \cite{Marques-Neves16} that for each $k\in\N$ there exists a disjoint collection of smooth, connected, closed, embedded minimal hypersurfaces $\{\Si^k_i: i=1, \cdots, l_k\}$ with integer multiplicities $\{m^k_i: i=1, \cdots, l_k\}\subset \N$, such that
\[ \om_k(M, g)= \sum_{i=1}^{l_k} m^k_i \cdot \Area(\Si^k_i), \quad \text{ and }\quad \sum_{i=1}^{l_k} \ind(\Si^k_i) \leq k. \]

Now we are going to state and prove our main theorem.
\begin{theorem}[Theorem A]
\label{T:theorem A}
If $g$ is a bumpy metric and $3\leq (n+1)\leq 7$, then for each $k\in\N$, there exists a disjoint collection of smooth, connected, closed, embedded, two-sided minimal hypersurfaces $\{\Si^k_i: i=1, \cdots, l_k\}$, such that
\[ \om_k(M, g)= \sum_{i=1}^{l_k} \Area(\Si^k_i), \quad \text{and }\quad \sum_{i=1}^{l_k} \ind(\Si^k_i) \leq k.\]
That is to say, the min-max minimal hypersurfaces are all two-sided and have multiplicity one.
\end{theorem}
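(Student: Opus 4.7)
The plan is to reduce the volume spectrum problem to the relative homotopy setting of Section \ref{S:first multiplicity one result}, where Theorem \ref{T:multiplicity 1 for sweepouts of boundaries} applies. First, I would invoke the reduction of Marques--Neves \cite{Marques-Neves16}: in a bumpy metric, there exists a $k$-dimensional cubical complex $X$ and a free homotopy class $\Pi$ of continuous maps $\Phi : X \to (\Z_n(M, \mZ_2), \mF)$ with $\Phi^*(\bar\la^k) \neq 0$ such that $\bL(\Pi) = \om_k(M,g)$, and this width is realized by a collection of minimal hypersurfaces with total index bounded by $k$. By Sharp's compactness theorem \cite{Sharp17} applied to the bumpy metric $g$, the set $\mathcal W$ of disjoint unions of smooth, closed, embedded minimal hypersurfaces with area $\leq \bL(\Pi)+1$ and total index $\leq k$ is finite. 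Fix a small $\eta > 0$ and choose $\Phi_0 \in \Pi$ with $\max_{x\in X} \M(\Phi_0(x)) < \bL(\Pi)+\eta$.

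Next, for a sufficiently small $\ep > 0$, define the cubical subcomplex $Z \subset X$ (after a fine subdivision) consisting of those cells $\alpha$ for which $\mF(|\Phi_0(x)|, [V]) \geq \ep$ for every $V \in \mathcal W$ and every $x \in \alpha$, and let $Y = \overline{X \setminus Z}$. Since $\mathcal W$ is finite, $Y$ consists of finitely many small ``pockets'' around the finitely many varifolds in $\mathcal W$, and by the Lusternik--Schnirelmann argument in \cite[Section 3]{Marques-Neves17}, $Y$ fails to detect the generator $\bar\la$; concretely, $(\Phi_0\vert_Y)^*(\bar\la) = 0 \in H^1(Y,\mZ_2)$. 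Now I would apply Pitts's combinatorial deformation (see Theorem \ref{T:combinatorial deformation} and \cite[Theorem 2.13]{Marques-Neves17}) to deform $\Phi_0$ on $Z$ so that in the resulting map (still denoted $\Phi_0$) we have
\[
\max_{x\in Z} \M(\Phi_0(x)) < \bL(\Pi),
\]
while keeping $\Phi_0$ in $\Pi$ and not increasing $\max_X \M(\Phi_0)$ above $\bL(\Pi)+\eta$. This uses the fact that no $V \in \mathcal W$ lies in $\Phi_0(Z)$ by construction.

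Then I would lift to the double cover $\partial : \C(M) \to \Z_n(M,\mZ_2)$. Let $\pi : \tilde X \to X$ be the $\mZ_2$-cover classified by $\Phi_0^*(\bar\la) \in H^1(X,\mZ_2)$, so that $\Phi_0$ lifts to a continuous map $\tilde\Phi_0 : \tilde X \to \C(M)$ with $\partial \tilde\Phi_0(\tilde x) = \Phi_0(\pi(\tilde x))$. Setting $\tilde Z = \pi^{-1}(Z)$ and $\tilde Y = \pi^{-1}(Y)$, the triviality of $\Phi_0\vert_Y$ in $H^1(Y,\mZ_2)$ implies that $\tilde Y$ splits as two disjoint homeomorphic copies $Y^+ \sqcup Y^-$ of $Y$. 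Consider the $(\tilde X, \tilde Z)$-homotopy class $\tilde\Pi$ of $\tilde\Phi_0$ in $\C(M)$ (with $h \equiv 0$, so $\Ah = \M\circ\partial$). The crucial nontriviality claim is
\[
\bL(\tilde\Pi) \geq \bL(\Pi) > \max_{x\in Z}\M(\partial \Phi_0(x)).
\]
For the first inequality, I would argue by contradiction: given $\tilde\Psi \in \tilde\Pi$ with $\max_{\tilde X}\M(\partial\tilde\Psi) < \bL(\Pi)$, the fact that $\tilde\Psi\vert_{\tilde Z} = \tilde\Phi_0\vert_{\tilde Z}$ on the double cover, together with the splitting of $\tilde Y$, allows one to push $\tilde\Psi\vert_{Y^+}$ down to $X$ via $\pi$ and glue with $\Phi_0\vert_Z$ to produce a continuous map $\Psi : X \to \Z_n(M,\mZ_2)$ homotopic to $\Phi_0$ with $\max_X \M(\Psi) < \bL(\Pi)$, contradicting $\bL(\Pi) = \omega_k$.

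With the nontriviality (\ref{E:nontrivial assumption3}) established, Theorem \ref{T:multiplicity 1 for sweepouts of boundaries} applied to $\tilde\Pi$ yields a disjoint collection $\Si = \bigsqcup_i \Si_i^k$ of smooth, connected, closed, embedded, two-sided minimal hypersurfaces with $\bL(\tilde\Pi) = \sum_i \Area(\Si_i^k)$ and $\sum_i \ind(\Si_i^k) \leq k$. Finally, to conclude $\bL(\tilde\Pi) = \om_k(M,g)$, I would let $\eta \to 0$: each such $\Si$ has $\Area(\Si) \leq \bL(\tilde\Pi) \leq \bL(\Pi)+\eta$ and index $\leq k$, so $\Si \in \mathcal W$ for $\eta$ small, and the finiteness of $\mathcal W$ plus the sandwiching $\bL(\Pi) \leq \bL(\tilde\Pi) \leq \bL(\Pi)+\eta$ forces $\bL(\tilde\Pi) = \om_k(M,g)$. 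The main obstacle will be the splitting argument for $\tilde Y$ and the verification that pushing down $\tilde\Psi\vert_{Y^+}$ yields a well-defined continuous competitor in $\Pi$; this requires careful bookkeeping of the double cover structure near $\partial Y = Y \cap Z$ and using that $\tilde\Psi \equiv \tilde\Phi_0$ on $\tilde Z$ to guarantee continuity across the gluing locus.
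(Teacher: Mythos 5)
Your proposal follows the paper's proof of Theorem \ref{T:theorem A} essentially step for step: realize $\om_k$ exactly by a homotopy class $\Pi$ using the bumpy-metric finiteness, split $X$ into $Y$ (a small $\mF$-neighborhood of the finitely many regular min-max varifolds, on which $\bar\la$ restricts to zero) and $Z$, deform on $Z$ via Pitts' combinatorial argument to push its mass strictly below $\bL(\Pi)$, lift through the double cover $\partial:\C(M)\to \Z_n(M,\mZ_2)$ so that $\tilde Y$ splits as $Y^+\sqcup Y^-$, verify $\bL(\tilde\Pi)\geq \bL(\Pi)>\max_{\tilde Z}\M(\partial\tilde\Phi_0)$ by the push-down-and-glue competitor, and then apply Theorem \ref{T:multiplicity 1 for sweepouts of boundaries} together with finiteness to conclude. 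The only point you gloss over is the justification for invoking Pitts' deformation on $Z$: the paper first pull-tightens and uses the index-bound machinery (Lemma \ref{L:good min-max sequence}) so that any almost-minimizing element of the restricted critical set would be stationary, hence regular, of index $\leq k$ and area $\om_k$, hence in $\mathcal S$ — contradicting the $\mF$-distance condition defining $Z$ — and it performs the deformation and the gluing at the level of discretized sequences interpolated across a thickened boundary $\bB_i$, rather than on a single continuous map.
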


\begin{proof}

If $g$ is bumpy, then there are only finitely many closed, embedded, minimal hypersurfaces with $\Area \leq \La$ and $\ind \leq I$ for given $\La>0, I\in\N$ by Sharp's result \cite{Sharp17}. Using the Morse index upper bound estimates for min-max theory by Marques-Neves \cite{Marques-Neves16}, we have
\begin{lemma}
Suppose $g$ is bumpy, then for each $k\in\N$, there exists a $k$-dimensional cubical complex $X_k$ 
and a map $\Phi_{0, k}: X_k\to \Z_n(M, \mF, \mZ_2)$ continuous in the $\mF$-topology with $\Phi_{0, k}\in \tilde{\mathcal P}_k$, such that
\[ \bL(\Pi_k)= \om_k(M, g), \]
where $\Pi_k=\Pi(\Phi_{0, k})$ is the class of all maps $\Phi: X_k\to \Z_n(M, \mF, \mZ_2)$ continuous in the $\mF$-topology that are homotopic to $\Phi_{0, k}$ in flat topology. 
\end{lemma}
\begin{proof}
From definition we know that 
\[ \om_k(M, g)=\inf\{ \bL(\Pi(\Phi)), \,  \Phi \in\tilde{\mathcal P}_k \}. \]
By area and index upper bounds and the finiteness result, the infimum is achieved.
\end{proof}

Now we fix $k\in \N$ and omit the sub-index $k$ in the following. Take $\Pi=[\Phi_0: X\to \Z_n(M, \mF, \mZ_2)]$ with $\bL(\Pi)=\om_k$. The following result is an outcome of the proof of \cite[Theorem 6.1]{Marques-Neves16}.
\begin{lemma}
\label{L:good min-max sequence}
Suppose $g$ is bumpy. Then there exists a pull-tight (see \cite[3.7]{Marques-Neves16}) min-max sequence $\{\Phi_i\}_{i\in\N}$ of $\Pi$ such that if $\Si\in \bC(\{\Phi_i\}_{i\in\N})$ has support a smooth, closed, embedded minimal hypersurface, then
\[ \|\Si\|(M)=\om_k(M, g),\, \text{ and } \ind(\text{support of } \Si)\leq k. \] 
\end{lemma}

We proceed the proof by the following four steps.

\vspace{1em}
\noindent{\bf Step 1}: \textit{In this and the next step, we show how to find another min-max sequence, still denoted as $\{\Phi_i\}_{i\in\N}$, such that for $i$ sufficiently large, either $|\Phi_i (x)|$ is close to a regular min-max minimal hypersurface, or the mass $\M(\Phi_i(x))$ is strictly less than $\om_k(M, g)$.}
\vspace{0.5em}

We recall the following observation by \cite[Claim 6.2]{Marques-Neves17}. Let $\mathcal S$ be the set of all stationary integral varifolds with $\Area\leq \om_k$ whose support is a smooth closed embedded minimal hypersurface with $\ind(support)\leq k$.  Consider the set $\mathcal T$ of all mod 2 flat cycles $T\in \Z_n(M, \mZ_2)$ with $\M(T)\leq \om_k$ and such that either $T=0$ or the support of $T$ is a smooth closed embedded minimal hypersurface with $\ind\leq k$.  By the bumpy assumption, both sets $\mathcal S$ and $\mathcal T$ are finite. Moreover,
\begin{lemma}[Claim 6.2 in \cite{Marques-Neves17}]
\label{L:varifold close implies flat close}
For every $\bar\ep>0$, there exists $\ep>0$ such that
\[ T\in\Z_n(M, \mZ_2) \text{ with } \mF(|T|, \mathcal S)\leq 2\ep \Longrightarrow \F(T, \mathcal T)<\bar\ep. \]
\end{lemma}

We also need another observation by \cite[Corollary 3.6]{Marques-Neves17}. Denote $S^1$ by the unit circle.
\begin{lemma}[Corollary 3.6 in \cite{Marques-Neves17}]
\label{L:threshold for flat topology}
If $\bar\ep$ is sufficiently small, depending on $\mathcal T$, then every map $\Phi: S^1\to \Z_n(M, \mZ_2)$ with  
\[ \Phi(S^1)\subset B^\F_{\bar\ep}(\mathcal T)=\{ T\in \Z_n(M, \mZ_2): \F(T, \mathcal T)<\bar\ep \} \]
is homotopically trivial.
\end{lemma}

let $\{\Phi_i\}_{i\in\N}$ be chosen as in Lemma \ref{L:good min-max sequence}. We choose $\bar\ep$ as Lemma \ref{L:threshold for flat topology} and $\ep$ by Lemma \ref{L:varifold close implies flat close}. Take a sequence $\{k_i\}_{i\in\N}\to \infty$, such that 
\[ \sup\{\mF(\Phi_i(x), \Phi_i(y)): \al\in X(k_i), x, y\in\al\}\leq \ep/2. \]
Consider $Z_i$ to be the cubical subcomplex of $X(k_i)$ consisting of all cells $\al\in X(k_i)$ so that
\[ \mF(|\Phi_i(x)|, \mathcal S)\geq \ep,\, \text{ for every vertex $x$ in $\al$.} \]
Hence $\mF(|\Phi_i(x)|, \mathcal S)\geq \ep/2$ for all $x\in Z_i$. 

Consider this sub-coordinating sequence $\{ \Phi_i\vert_{Z_i} \}_{i\in\N}$. $\bL(\{\Phi_i\vert_{Z_i}\})$ and $\bC(\{\Phi_i\vert_{Z_i}\})$ are defined in the same way as in Section \ref{SS:min-max construction in continuous setting} with $\Ah$ replaced by $\M$.
\begin{lemma}
\label{L:a dichotomy}
We have the following dichotomy:
\begin{itemize}
\item no element $V\in \bC(\{ \Phi_i\vert_{Z_i} \}_{i\in\N})$ is $\mZ_2$-almost minimizing in small annuli (see \cite[2.10]{Marques-Neves17}),
\item or
\begin{equation}
\label{E:good sub-width upper bound1}
\bL(\{ \Phi_i\vert_{Z_i} \}_{i\in\N})<\bL(\Pi)=\om_k. 
\end{equation}
\end{itemize}
\end{lemma}
\begin{proof}
Suppose that (\ref{E:good sub-width upper bound1}) does not hold, then $\bL(\{ \Phi_i\vert_{Z_i} \}_{i\in\N})=\bL(\Pi)$. As $\{\Phi_i\}_{i\in\N}$ is pull-tight, we know that every $V\in \bC(\{ \Phi_i\vert_{Z_i} \}_{i\in\N})$ is stationary. If $V$ is also $\mZ_2$-almost minimizing in small annuli, then $V$ is regular by the regularity of Pitts \cite[Theorem 7.11]{Pitts81} and Schoen-Simon \cite[Theorem 4]{Schoen-Simon81}; (see also \cite[Theorem 2.11]{Marques-Neves17} for the adaption to $\mZ_2$-coefficients). By Lemma \ref{L:good min-max sequence}, $V\in \mathcal S$, which is a contradiction.
\end{proof}

Let $Y_i=\overline{X\backslash Z_i}$. It then follows that 
\begin{equation}
\label{E:Y_i are trivial}
\mF(|\Phi_i(x)|, \mathcal S)\leq \frac{3}{2}\ep,\, \text{ for all } x\in Y_i.
\end{equation}
We also denote $B_i=Y_i\cap Z_i$. In fact, $B_i$ is the topological boundary of $Y_i$ and $Z_i$. For later purpose, we also consider the set
\[ \bB_i= \text{ the union of all cells $\al\in Z_i$ such that } \al\cap B_i\neq \emptyset. \]
$\bB_i$ can be thought of the ``thickening" of $B_i$ inside $Z_i$. 

Let $\la=\Phi_i^* (\bar\la)\in H^1(X, \mZ_2)$. Consider the inclusion maps $i_1: Y_i\to X$ and $i_2: Z_i\to X$. It then follows from (\ref{E:Y_i are trivial}), Lemma \ref{L:varifold close implies flat close} and Lemma \ref{L:threshold for flat topology} that
\[ i_1^*(\la)=0 \in H^1(Y_i, \mZ_2). \]
Then by \cite[Claim 6.3]{Marques-Neves17}, $(\Phi_i)\vert_{Z_i}$ is a $(k-1)$-sweepout, i.e. 
\[ i_2^*(\la^{k-1})\neq 0 \in H^{k-1}(Z_i, \mZ_2). \]
Now we let $Y'_i=Y_i\cup \bB_i$ and $Z_i'=\overline{Z_i\backslash \bB_i}$, and $i'_i: Y'_i \to X$ and $i'_2: Z'_i \to X$ be the inclusion maps. Note that (\ref{E:Y_i are trivial}) is satisfied with $Y_i, \frac{3}{2}\ep$ replaced by $Y_i', 2\ep$ respectively, so by similar reasoning we have
\[ (i'_1)^*(\la)=0 \in H^1(Y'_i, \mZ_2),\, \text{ and } (i'_2)^*(\la^{k-1})\neq 0 \in H^{k-1}(Z_i', \mZ_2). \]

\vspace{1em}
\noindent{\bf Step 2}: \textit{The strategy is to follow the idea in the proof of Theorem \ref{T:main min-max theorem} and apply \cite[Theorem 2.13]{Marques-Neves17} (see also Theorem \ref{T:combinatorial deformation}) to deform $\{\Phi_i\}_{i\in\N}$ so as to decrease $\bL(\{ (\Phi_i)\vert_{Z_i} \}_{i\in\N})$ and make (\ref{E:good sub-width upper bound1}) be satisfied.}
\vspace{0.5em}

If (\ref{E:good sub-width upper bound1}) holds true, then we are done for this step. So let us assume that
\begin{equation}
\label{E:bad sub-width upper bound}
\bL(\{ \Phi_i\vert_{Z_i} \}_{i\in\N}) = \bL(\Pi)=\om_k. 
\end{equation}

By Lemma \ref{L:a dichotomy} and our assumption (\ref{E:bad sub-width upper bound}), we know that no element $V\in \bC(\{ \Phi_i\vert_{Z_i}\}_{i\in\N})$ is $\mZ_2$-almost minimizing in small annuli.

\vspace{1em}
Since $\Phi_i: X\to \Z_n(M, \mF, \mZ_2)$ has no concentration of mass as it is continuous in $\mF$-topology, we can apply \cite[Theorem 3.9]{Marques-Neves17} (the counterpart of Theorem \ref{T:continuous to discrete} for maps to $\Z_n(M, \mZ_2)$) to produce a sequence of maps
\[ \phi_i^j: X(k_i+k_i^j)_0 \to \Z_n(M, \mZ_2), \]
with $k_i^j\in\N$ and 
$k_i^j<k_i^{j+1}$ for all $j\in \N$ and a sequence of positive $\{\de_i^j\}_{j\in\N} \to 0$, such that
\begin{enumerate}[label=(\roman*)]
\item the fineness $\mf(\phi_i^j)\leq \de_i^j$;
\item \[ \sup\{ \F(\phi_i^j(x)-\Phi_i(x)): x\in X(k_i+k_i^j)_0 \}\leq \de_i^j; \]
\item for some sequence $l_i^j\to\infty$ with $l_i^j < k_i^j$
\[ \M(\phi_i^j(x)) \leq \sup\{ \M(\Phi_i(y)): x, y\in \al, \text{ for some } \al\in X(k_i+l_i^j) \}+\de_i^j. \]
\end{enumerate}

As $\Phi_i$ is continuous in $\mF$-topology, we get from property (\rom{3}) that for all $x\in X(k_i+k_i^j)_0$,
\[ \M(\phi_i^j(x))\leq \M(\Phi_i(x))+\eta_i^j \]
with $\eta_i^j\to 0$ as $j\to\infty$. Applying \cite[Lemma 4.1]{Marques-Neves14} with $\mathcal S=\Phi_i(X)$, we get by (\rom{2}) that
\begin{itemize}
\item[(\rom{4})] \[ \sup\{ \mF(\phi_i^j(x), \Phi_i(x)): x\in X(k_i+k_i^j)_0 \} \to 0,\, \text{ as } j\to\infty. \]
\end{itemize}

We can choose $j(i)\to\infty$ as $i\to\infty$ (then $k_i^{j(i)}\to\infty$) such that $\varphi_i=\phi_i^{j(i)}: X(k_i+k_i^{j(i)})_0\to \Z_n(M, \mZ_2)$ satisfies:
\begin{itemize}
\item $\sup\{ \mF(\varphi_i(x), \Phi_i(x)): x\in X(k_i+k_i^{j(i)})_0 \}\leq a_i$ with $a_i\to 0$ as $i\to\infty$;
\item $\sup\{ \mF(\Phi_i(x), \Phi_i(y)): x, y\in\al, \al\in X(k_i+k_i^{j(i)}) \}\leq a_i$;
\item the fineness $\mf(\varphi_i) \to 0$ as $i\to\infty$;
\item the Almgren extension $\Phi_i^{j(i)}: X\to \Z_n(M, \M, \mZ_2)$ (see \cite[3.10]{Marques-Neves17} for definition, and it is continuous in the $\M$-topology) is homotopic to $\Phi_i$ in the flat topology (by \cite[Corollary 3.12]{Marques-Neves17}), and $\sup\{ \mF(\Phi_i^{j(i)}(x), \Phi_i(x)): x\in X \} \to 0$ as $i\to\infty$ (by \cite[3.10]{Marques-Neves17}).
\end{itemize} 
If we let $S=\{\varphi_i\}_{i\in\N}$ be a discrete sweepout, then we have $\bL(S)=\bL(\{\Phi_i\}_{i\in\N})$ and $\bC(S)=\bC(\{\Phi_i\}_{i\in\N})$.
Moreover, consider the restrictions of $\varphi_i$ to $Z_i(k_i^{j(i)})_0$:
\[ S_Z=\{ \varphi_i: Z_i(k_i^{j(i)})_0 \to \Z_n(M, \mZ_2) \}. \]
Similarly we have 
\[ \bL(S_Z)= \bL(\{ \Phi_i\vert_{Z_i} \}_{i\in\N})=\bL(\Pi),\, \text{ and } \bC(S_Z)= \bC(\{ \Phi_i\vert_{Z_i} \}_{i\in\N}). \]

\vspace{1em}
As no $V\in \bC(S_Z)$ is $\mZ_2$-almost minimizing in small annuli, by \cite[Theorem 2.13]{Marques-Neves17} (which is a reformulation of Almgren-Pitts combinatorial argument \cite[Theorem 4.10]{Pitts81}), we can find a sequence $\tilde S_Z=\{\tilde \varphi_i\}$ of maps:
\[ \tilde\varphi_i: Z_i(k_i^{j(i)}+l_i)_0 \to \Z_n(M, \mZ_2), \]
and a sequence of homotopies 
\[\psi_i: I(l_i)_0\times Z_i(k_i^{j(i)}+l_i)_0 \to \Z_n(M, \mZ_2), \]
such that
\begin{itemize}
\item $\psi_i([0], x)= \varphi_i \circ \n(k_i^{j(i)}+l_i, k_i^{j(i)})(x)$ and $\psi_i([1], x) = \tilde\varphi_i (x)$;
\item the fineness of $\psi_i$ tends to zero as $i\to \infty$;
\item \[ \limsup_{i\to\infty} \sup\{ \M(\psi_i(t, x)): (t, x)\in I(l_i)_0\times Z_i(k_i^{j(i)}+l_i)_0\}=\bL(S_Z); \]
(note that this property was not explicitly listed in \cite[Theorem 2.13]{Marques-Neves17}, but it follows from the construction in \cite[Theorem 4.10]{Pitts81}).)
\item $\bL(\tilde S_Z)<\bL(S_Z)$.
\end{itemize}

Now we construct a new sequence $S^*=\{\varphi^*_i\}_{i\in\N}$ with
\[ \varphi^*_i: X(k_i+k_i^{j(i)}+l_i)_0 \to \Z_n(M, \mZ_2), \]
defined as
\begin{itemize}
\item $\varphi^*_i(x)= \varphi_i\circ \n(k_i^{j(i)}+l_i, k_i^{j(i)})(x)$,  when $x\in Y_i(k_i^{j(i)}+l_i)_0$;
\item $\varphi^*_i(x)=\psi_i(t(x), x)$, where $x\in\bB_i(l_i)_0$ and $t(x)=\min\{ 3^{-l_i} \cdot \bd(x, \bB_i\cap Y_i), 1\}\in I(l_i)_0$; (here 
$\bd$ is the distance function restricted to $\bB_i(l_i)_0$; see Appendix \ref{A:cubical complex structures});
\item $\varphi^*_i(x)= \tilde\varphi_i(x)$, when $x\in Z'_i(k_i^{j(i)}+l_i)_0$; (note that $t(x)\geq 1$ when $x\in Z'_i\cap \bB_i$).
\end{itemize}

By the construction, we see that 
\begin{itemize}
\item $\varphi^*_i$ is homotopic to $\varphi_i$ with fineness tending to zero as $i\to\infty$;
\item $\bL(S^*) = \bL(\Pi)$;
\item  $\limsup_{i\to\infty} \sup \{ \M(\varphi^*_i(x)): x\in Z'_i(k_i^{j(i)}+l_i)_0\} \leq \bL(\tilde S_Z)< \bL(\Pi)$.
\end{itemize}

\vspace{1em}
Consider the Almgren's extension of $\varphi^*_i$:
\[ \Phi^*_i: X\to \Z_n(M, \M, \mZ_2) .\]
Then 
\begin{enumerate}[label=(\alph*)]
\item $\Phi^*_i$ is homotopic to $\Phi_i^{j(i)}$ and hence to $\Phi_i$ in the flat topology by \cite[3.11]{Marques-Neves17}; and by \cite[3.10]{Marques-Neves17}
\item $\sup\{\mF(\Phi^*_i(x), \Phi_i(x)): x\in Y_i\}\to 0$;
\item $\bL(\{\Phi^*_i\})=\bL(S^*)=\bL(\Pi)$;
\item 
\[ \limsup_{i\to\infty} \sup\{ \M(\Phi^*_i (x)): x\in Z_i' \} \leq \bL(\tilde S_Z)<\bL(\Pi). \]
\end{enumerate}

By summarizing what we have done (and abusing the notation $Y_i=Y'_i$ and $Z_i=Z'_i$), we produced another min-max sequence $\{ \Phi^*_i \}_{i\in\N}\subset \Pi$ such that
\begin{enumerate}
\item $X$ can be decomposed to $Y_i$ and $Z_i$ with $Z_i=\overline{X\backslash Y_i}$, and for $i$ large enough, 
\[ i_1^*(\la)=0 \in H^1(Y_i, \mZ_2),\, \text{ and } i_2^*(\la^{k-1})\neq 0 \in H^{k-1}(Z_i, \mZ_2). \]
\item $\bL(\{\Phi^*_i\})=\bL(\{\Phi_i\})=\bL(\Pi)$;
\item 
\[ \limsup_{i\to\infty} \sup\{ \M(\Phi^*_i (x)): x\in Z_i \}<\bL(\Pi). \]
\end{enumerate}
Note that both $Y_i$ and $Z_i$ are nonempty for $i$ large enough by (1)(3).

\vspace{1em}
\noindent{\bf Step 3}: \textit{Now we want to produce sweepouts in $\C(M)$ by lifting to the double cover $\partial: \C(M)\to \Z_n(M, \mZ_2)$ so as to produce sweepouts satisfying the assumption of Theorem \ref{T:multiplicity 1 for sweepouts of boundaries}.}
\vspace{0.5em}

We abuse notation and still write $\{\Phi_i^*\}$ as $\{\Phi_i\}$. Since $(\Phi_i)^*(\bar\la)\neq 0 \in H^1(X, \mZ_2)=\mZ_2$, there exist a double cover $\pi: \tilde X \to X$ with deck transformation map $\tau: \tilde X\to\tilde X$, and the lifting maps:
\[ \tilde \Phi_i: \tilde X \to (\C(M), \mF), \]
satisfying $\partial \tilde\Phi_i=\Phi_i\circ \pi$. 
Indeed, the cohomological condition implies that the induced maps $(\Phi_i)_*: \pi_1(X)\to \pi_1(\Z_n(M, \mZ_2))=\mZ_2$ are surjective; see \cite[Definition 4.1 (i)]{Marques-Neves17}. So the kernel of $(\Phi_i)_*$ is a subgroup of $\pi_1(X)$ with index 2.  Then the existence of such liftings follows from \cite[Proposition 1.36 and Proposition 1.33]{Hatcher02}. 

Note that $i_1^*\la =0 \in H^1(Y_i, \mZ_2)$, 
so the pre-image of $Y_i$ is disconnected, and is a disjoint union of two copies of $Y_i$:
\[ \tilde Y_i=\pi^{-1}(Y_i) = Y_i^+\cup Y_i^-, \]
where both $Y_i^+$ and $Y_i^-$ are homeomorphic to $Y_i$. In fact, the cohomological condition implies that every closed curve $\ga: S^1\to Y_i$ lies in the kernel of $(\Phi_i)_*$, so the lift $\tilde \ga$ of $\ga$ to $\tilde X$ is still a closed curve. This means that $\tilde Y_i$ is disconnected as we want. 

Denote $\tilde Z_i$, $\tilde B_i$ and $\tilde\bB_i$ the pre-images of $Z_i, B_i, \bB_i$ under $\pi$ respectively. Then $\tilde \bB_i =\bB_i^+ \cup \bB_i^-$ is also a disjoint union of two copies of $\bB_i$. 

\begin{lemma}
\label{L:lifting maps are good}
For $i$ large enough, if $\tilde\Pi_i$ is the $(\tilde X, \tilde Z_i)$-homotopy class associated with $\tilde \Phi_i$, then we have
\[ \bL(\tilde\Pi_i) \geq \bL(\Pi)> \max_{x\in \tilde Z_i} \M(\partial \tilde\Phi_i(x)). \]
\end{lemma}
\begin{proof}
Fix $i$ large, so that
\[ \sup_{x\in Z_i} \M(\Phi_i(x))<\bL(\Pi), \]
and we will omit the sub-index in the following proof. 

If the conclusion were not true, then we can find a sequence of maps $\{ \tilde\Psi_j: \tilde X \to (\C(M), \mF) \}_{j\in\N}\subset \tilde\Pi$,  such that
\[ \limsup_{j\to\infty} \sup\{ \M(\partial \tilde\Psi_j(x)): x\in X \}<\bL(\Pi),  \]
and homotopy maps $\{ H_j: [0, 1]\times \tilde X \to \C(M) \}$ which are continuous in the flat topology, $H_j(0, \cdot)=\tilde\Psi_j$, $H_j(1, \cdot)=\tilde\Phi$, and
\[ \limsup_{j\to\infty} \sup\{ \mF(H_j(t, x), \tilde\Phi(x)): t\in [0, 1], x\in \tilde Z \}=0. \]

We construct a new sequence of maps $\{\tilde\Psi^*_j\}_{j\in \N}$ defined as
\begin{itemize}
\item $\tilde\Psi^*_j(x)= \tilde\Psi_j(x)$, if $x\in Y^+$, and $\tilde\Psi^*_j(x)=\tilde\Psi_j \circ \tau(x)$, if $x\in Y^-$;
\item $\tilde\Psi^*_j(x)= H_j(t(x), x)$, where $t(x)=\min\{ \dist(x, \bB^+\cap Y^+), 1\}$ if $x\in \bB^+$, and $\tilde\Psi^*_j(x)= H_j \circ \tau(x)$, if $x\in \bB^-$; (here $\dist$ is the distance function by viewing $\bB$ as a cube complex in some $I(m, l)$);
\item $\tilde\Psi^*_j(x)= \tilde\Phi(x)$, if $x\in \tilde Z'$; (note that $t(x) \geq 1$ for $x\in \tilde Z'\cap (\bB^+\cap \bB^-)$).
\end{itemize}
Note that though $\tilde\Psi^*_j$ themselves may not be continuous as maps to $\C(M)$, $\Psi_j^*$ can be passed to quotient as continuous maps from $X$ to $\Z_n(M, \mZ_2)$. This is essentially where we used the structures of $\tilde Y$ and $\tilde \bB$, that is, $(Y^+, Y^-)$ and $(\bB^+, \bB^-)$ are pairwise disjoint.

Denote the quotient maps of $\{\tilde\Psi^*_j\}_{j\in\N}$ by $\{ \Psi^*_j=\partial\circ \tilde\Psi_j^*: X\to \Z_n(M, \mZ_2) \}_{j\in \N}$. We have
\begin{itemize}
\item $\Psi^*_j$ is homotopic to $\Phi$ in the flat topology;
\item $\limsup_{j\to\infty} \sup\{ \M(\Psi_j^*(x)): x\in X \}<\bL(\Pi)=\om_k(M, g)$ (by the three above inequalities).
\end{itemize}
This will lead to a contradiction with the definition of $k$-width once we prove that $\Psi^*_j$ is an admissible $k$-sweepout when $j$ is sufficiently large. Indeed, the only thing left is to show that $\Psi^*_j$ has no concentration of mass. This follows from the third inequality above. 
So we finish the proof.
\end{proof}

\vspace{1em}
\noindent{\bf Step 4}: \textit{We are ready to finish the proof of Theorem \ref{T:theorem A}.}
\vspace{0.5em}

For $i$ large enough as in Lemma \ref{L:lifting maps are good}, Theorem \ref{T:multiplicity 1 for sweepouts of boundaries} applied to $\tilde\Pi_i$ gives a disjoint collection of smooth, connected, closed, embedded, 2-sided, minimal hypersurfaces $\Si_i=\cup_{j=1}^{N_i} \Si_{i, j}$, such that
\[ \bL(\tilde\Pi_i) = \sum_{j=1}^{N_i} \Area(\Si_{i, j}),\, \text{ and } \ind(\Si_i)\leq k. \]
Note also that $\bL(\tilde\Pi_i) \leq \bL(\Phi_i)\to \bL(\Pi)=\om_k$. Counting the fact that there are only finitely many smooth, closed, embedded minimal hypersurfaces with $\Area\leq \om_k+1$ and $\ind\leq k$, for $i$ sufficiently large we have
\[ \bL(\tilde\Pi_i)=\bL(\tilde\Pi_{i+1})=\cdots = \om_k. \]
Hence we finish the proof of Theorem \ref{T:theorem A}.
\end{proof}

\begin{remark}
By the course of the above proof, in a bumpy metric, the min-max minimal hypersurfaces associated with any homotopically nontrivial sweepouts of mod-2 cycles are always two-sided and have multiplicity one.  In fact, if $\Phi: X\to \Z_n(M, \mZ_2)$ is homotopically nontrivial, then the induced map $\Phi_*: \pi_1(X)\to \pi_1(\Z_n(M, \mZ_2))=\mZ_2$ must be surjective. Otherwise by \cite[Proposition 1.33]{Hatcher02} $\Phi$ can be lifted to a map $\tilde\Phi: X\to \C(M)$ which is then homotopically trivial as $\C(M)$ is contractible.  With these topological information, the above proof works the same way and implies the two-sidedness and multiplicity one for min-max minimal hypersurfaces associated with $\Pi(\Phi)$.
\end{remark}

\appendix

\section{Cubical complex structures}
\label{A:cubical complex structures}

Here we recall several cubical complex structures in \cite[2.1]{Marques-Neves17}.

For each $k\in\N$, $I(1, k)$ denotes the cubical complex on the unit interval $I=[0, 1]$ whose 1-cells and 0-cells (which are also called vertices) are, respectively,
\[ [0, 3^{-k}], [3^{-k}, 2\cdot 3^{-k}], \cdots, [1-3^{-k}, 1]\, \text{ and }\, [0], [3^{-k}], \cdots, [1-3^{-k}], [1]. \]
We then denote by $I(m, k)$ the cell complex on $I^m$:
\[ I(m, k)=I(1, k)\otimes\cdots\otimes I(1, k)\quad \text{$m$ times}. \]
Then $\al=\al_1\otimes\cdots\otimes\al_m$ is a $q$-cell of $I(m, k)$ if and only if $\al_i$ is a cell of $I(1, k)$ for each $i$, and $\sum_{i=1}^m\dim(\al_i)=q$. We often identify a $q$-cell $\al$ with its support $\al_1\times\cdots\times \al_m\subset I^m$. The distance function $\bd$ on $I(m, k)_0$ is defined as $\bd(x, y)=3^k \sum_{i=1}^k|x_i-y_i|$, $x, y \in I(m, k)_0$, \cite[4.1(1)(e)]{Pitts81}.

Let $X\subset I^m$ be a cubical subcomplex. The cubical complex $X(k)$ is the union of all cells of $I(m, k)$ whose support is contained in some cell of $X$.  We use the notation $X(k)_q$ to denote the set of all $q$-cells in $X(k)$, and particularly $X(k)_0$ to denote the set of vertices in $X(k)$. Two vertices $x, y\in X(k)_0$ are adjacent if they belong to a common cell in $X(k)_1$. 

Let $Y\subset I(m, k)$ be a cubical subcomplex. Similarly, the cubical complex $Y(l)$ is the union of all cells of $I(m, k+l)$ whose support is contained in some cell of $Y$. $Y(k)_q$ is defined in the same way.

Given $k, l\in\N$, we define $\n(k, l): X(k)_0\to X(l)_0$ so that $\n(i, j)(x)$ is the element in $X(l)_0$ that is closest to $x$; (see \cite[page 141]{Pitts81}).

\section{Removing singularity for weakly stable PMC}
\label{A:removable singularity}

We record the following standard removable singularity result. 

\begin{theorem}
\label{T:removable singularity}
Let $(M^{n+1}, g)$ be a closed Riemannian manifold of dimension $3\leq (n+1)\leq 7$. Given $h\in C^\infty(M)$ and $\Si\subset B_\ep(p)\backslash\{p\}$ an almost embedded hypersurface with $\partial\Si\cap B_\ep(p)\backslash\{p\}=\emptyset$, assume that $\Si$ has prescribing mean curvature $h$, and $\Si$ is weakly stable in $B_\ep(p)\backslash\{p\}$ as in Theorem \ref{T:compactness with bounded index}, Part 1 of proof. 
If $\Si$ represents a varifold of bounded first variation in $B_\ep(p)$, then $\Si$ extends smoothly across $p$ as an almost embedded hypersurface in $B_\ep (p)$.
\end{theorem}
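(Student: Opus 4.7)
The plan is to reduce the question to a classical removable singularity statement for quasilinear elliptic equations via a tangent cone analysis at $p$. Since $\Sigma$ has bounded first variation in $B_\epsilon(p)$, Allard's monotonicity formula provides a finite density $\Theta^n(\|\Sigma\|,p)$ and uniform mass bounds on the rescalings $\Sigma_{r_i}=\eta_{p,r_i\#}\Sigma$ for any sequence $r_i\to 0$. Along a subsequence, $\Sigma_{r_i}$ converges as varifolds in $\R^{n+1}$ to some tangent varifold $C$ at $p$; since the rescaled prescribing function $r_i\,h$ tends to $0$ in $C^0$, the limit $C$ is stationary, hence a minimal cone.

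Next I would upgrade this to smooth convergence away from the origin. Weak stability of $\Sigma$ in $B_\epsilon(p)\setminus\{p\}$, combined with Theorem \ref{T:Curvature estimates for weakly stable PMC} applied to nested domains exhausting $B_\epsilon(p)\setminus\{p\}$, yields the curvature estimate $|A^\Sigma|(x)\leq C_1/d_M(x,p)$ on $\Sigma\cap B_{\epsilon/2}(p)$. After rescaling this gives locally uniformly bounded $|A|$ on $\R^{n+1}\setminus\{0\}$ for the $\Sigma_{r_i}$, so by the standard compactness theorem for almost embedded hypersurfaces with bounded curvature (using the Strong Maximum Principle \cite[Lemma 3.12]{Zhou-Zhu18} to order touching sheets) $\Sigma_{r_i}$ converges smoothly and graphically, with integer multiplicity $m\geq 1$, to $C\setminus\{0\}$. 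The weak stability of $\Sigma_{r_i}$ (with prescribing functions $r_i h\to 0$) passes through $C^{2,\alpha}$ convergence, so $C$ is a weakly stable minimal cone in $\R^{n+1}$, smooth away from the origin. In dimension $n+1\leq 7$, the Bernstein theorem of Schoen-Simon-Yau and Schoen-Simon (as invoked in the proof of Theorem \ref{T:Curvature estimates for weakly stable PMC}) then forces $C$ to be a hyperplane $P$ with multiplicity $m$.

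With the tangent cone identified as a multiplicity-$m$ hyperplane $P$, the smooth graphical convergence on dyadic annuli implies that for $\delta>0$ sufficiently small, $\Sigma\cap B_\delta(p)$ decomposes as $m$ ordered smooth normal graphs $u_1\leq\cdots\leq u_m$ over $P$ defined on $B^n_\delta(0)\setminus\{0\}$, with $|\nabla u_j(x)|+|x|\,|\nabla^2 u_j(x)|\to 0$ as $x\to 0$. Each $u_j$ satisfies the quasilinear elliptic PMC equation with smooth bounded right-hand side determined by $h$. Iterating interior Schauder estimates on dyadic annuli, or equivalently applying a standard removable singularity result for bounded $C^2$ solutions of uniformly elliptic quasilinear equations, extends each $u_j$ smoothly to all of $B^n_\delta(0)$; gluing the graphs back recovers $\Sigma$ as an almost embedded hypersurface across $p$.

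The main obstacle is the upgrade in the second paragraph: passing the weak stability condition through the blow-up limit together with the almost embedded structure so as to produce a smooth, stable tangent cone. If one knew a priori that the tangent cone were smooth and stable, the Bernstein classification in dimension $n+1\leq 7$ is routine; but promoting varifold convergence to smooth graphical convergence on annuli requires the curvature estimate from Theorem \ref{T:Curvature estimates for weakly stable PMC} together with the Strong Maximum Principle to organize possibly self-touching sheets and confirm that the multiplicity-$m$ sheeted description is preserved under the blow-up.
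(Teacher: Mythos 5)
Your blow-up analysis, the use of the curvature estimate of Theorem \ref{T:Curvature estimates for weakly stable PMC} to upgrade to smooth graphical convergence away from the origin, the passage of weak stability to the tangent cone, and the classification of the cone as a multiplicity-$m$ hyperplane all match the paper's argument, as does the decomposition of $\Si$ into $m$ ordered sheets near $p$. The divergence, and the gap, is in your final step. You assert that $\Si\cap B_\de(p)$ is a union of $m$ graphs $u_1\leq\cdots\leq u_m$ over a \emph{single fixed} plane $P$ on $B^n_\de(0)\setminus\{0\}$ with $|\nabla u_j(x)|\to 0$ as $x\to 0$, and then invoke a removable singularity theorem for quasilinear equations. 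But that gradient decay over one fixed plane is essentially equivalent to uniqueness of the tangent plane at $p$, which has not been established at this stage (the paper explicitly remarks that $C$ may a priori not be unique). The blow-up argument only gives, on each dyadic annulus $A_{\si/2,\si}(p)$, a graphical decomposition over some plane $P_\si$ with small gradient relative to $P_\si$; the planes $P_\si$ could fail to converge as $\si\to 0$. The curvature estimate $|A|\leq C_1/|x-p|$ bounds the turning of the normal across each dyadic annulus only by the fixed constant $C_1\log 2$, which is not summable over annuli, so one cannot conclude that the sheets remain graphs over a single plane with small (or even bounded) gradient all the way to $p$. Without that, the quasilinear equation for $u_j$ need not be uniformly elliptic on the punctured disk, and the Schauder/removable-singularity iteration does not get started.

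The paper closes this loop differently: after continuing each sheet $\Si_i$ to all of $B_{\si_0}(p)\setminus\{p\}$ as a varifold of bounded first variation, it observes that $\Theta^n(\Si_i,p)\geq 1$ for each $i$ while $\sum_i\Theta^n(\Si_i,p)=\Theta^n(\Si,p)=m$, forcing $\Theta^n(\Si_i,p)=1$. Allard's regularity theorem then applies to each sheet individually and \emph{outputs} the $C^{1,\al}$ graph representation over a fixed plane near $p$ (hence uniqueness of the tangent plane) rather than assuming it; higher regularity then follows from the PMC equation exactly as you indicate. To repair your argument you would need either to prove tangent-plane uniqueness directly or to insert this density-one-plus-Allard step, at which point your proof coincides with the paper's.
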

\begin{proof}
Given any sequence of positive $\la_i\to 0$, consider the blowups $\{\bmu_{p, \la_i}(\Si)\subset \bmu_{p, \la_i}(M)\}$, where $\bmu_{p, \la_i}(x)=\frac{x-p}{\la_i}$. Since $\Si$ has bounded first variation, $\bmu_{p, \la_i}(\Si)$ converges (up to a subsequence) to a stationary integral rectifiable cone $C$ in $\R^{n+1}=T_p M$.  By weakly stability and Theorem \ref{T:Curvature estimates for weakly stable PMC} (which works well for the notion of weak stability of $\Si_\infty$), 
the convergence is locally smooth and graphical away from the origin, so $C$ is an integer multiple of some embedded minimal hypercone; moreover, $C$ is weakly stable, and hence is stable as an embedded minimal hypersurface away from $0$. 
Therefore $C$ is an integer multiple of some $n$-plane $P$ by Simons's classification \cite{Simons}, i.e. $C=m \cdot P$ where $m=\Theta^n(\Si, p)$. Note that a priori $C$ may not be unique.

By the locally smooth and graphical convergence, there exists $\si_0>0$ small enough, such that for any $0<\si\leq \si_0$, $\Si$ has an $m$-sheeted, ordered (in the sense of \cite[Definition 3.2]{Zhou-Zhu18}), graphical decomposition in the annulus $A_{\si/2, \si}(p)=B_\si(p)\backslash\overline{B}_{\si/2}(p)$:
\[ \Si\cap A_{\si/2, \si}(p)=\cup_{i=1}^m \Si_i(\si). \]
Here each $\Si_i(\si)$ is a graph over $A_{\si/2, \si}(p)\cap P$ for some $n$-plane $P\subset T_p M$. 

We can continue each $\Si_i(\si)$ all the way to $B_{\si_0}(p)\backslash \{p\}$, and we denote the continuation by $\Si_i$. Each $\Si_i$ can be extended as a varifold across $p$ with uniformly bounded first variation (since $\Si_i\subset \Si$ satisfies the area decay estimates, $\area(\Si_i\cap B_\si(p))\leq C \si^n$). We claim that the density satisfies $\Theta^n(\Si_i, p)=1$ for each $i$. In fact, $\Theta^n(\Si_i, p)\geq 1$ as any blowups of $\Si_i$ converges to an $n$-plane, but $m=\Theta^n(\Si, p)=\sum_{i=1}^m \Theta^n(\Si_i, p)$. Now applying the Allard regularity theorem \cite{Allard72} to each $\Si_i$, we get that $\Si_i$ extends as a $C^{1, \alpha}$ hypersurface across $p$. Higher regularity of $\Si_i$ follows from the prescribing mean curvature equation and elliptic regularity.  
\end{proof}

\section{Proof of Lemma \ref{L:index bound pre lemma2}}
\label{A:a calculus lemma}
\cite[Lemma 4.5]{Marques-Neves16} is purely a result in finite dimensional multi-variable calculus. Let us translate the problem as follows: let $\bB$ be some compact topological space with $0\in \bB$, and $\{f^\om\in C^\infty(\overline{B}^k): \om\in\bB\}$ be a family of smooth functions defined on $\overline{B}^k$, such that $\om \to f^\om$ is a continuous map in the smooth topology on $C^\infty(\overline{B}^k)$. Moreover we assume
\begin{itemize}
\item $f^\om$ has a unique maximum $m(\om)\in B^k_{c_0/\sqrt{10}}$, and $m(0)=0$;
\item $-\frac{1}{c_0}\Id\leq D^2 f^\om(u)\leq -c_0\Id$, for all $u\in \overline{B}^k$ and for some $c_0\in (0, 1)$.
\end{itemize}
So for each $\om\in \bB$, we have
\begin{equation}
\label{E:quadratic estimates for functions}
f^\om(m(\om))-\frac{1}{2c_0}|u-m(\om)|^2\leq  f^\om(u)\leq f^\om(m(\om))-\frac{c_0}{2}|u-m(\om)|^2
\end{equation}
for all $u\in\overline{B}^k$.

For each $f^\om$, consider the one-parameter flow $\{\phi^\om(\cdot, t): t\geq 0\}\subset \Diff(\overline{B}^k)$ generated by the vector field
\[ u\to -(1-|u|^2)\nabla f^\om (u),\quad u\in\overline{B}^k. \]
For fixed $u\in\overline{B}^k$, the function $t\to f^\om(\phi^\om(u, t))$ is non-increasing.

The prototype of \cite[Lemma 4.5]{Marques-Neves16} is the following lemma, and the proof is essentially the same as therein so we omit it.
\begin{lemma}
\label{L:a calculus lemma}
For any $\de<\frac{1}{4}$, there exists $T=T(\de, \bB, \{f^\om\}, c_0)\geq 0$ such that for any $\om \in \bB$ and $v\in \overline{B}^k$ with $|v-m(\om)|\geq \de$, we have
\[ f^\om(\phi^\om(v, T))<f^\om(0)-\frac{c_0}{10}\, \text{ and }\, |\phi^\om(v, T)|>\frac{c_0}{4}. \]
\end{lemma}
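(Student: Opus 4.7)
The plan is to reduce both desired inequalities to a single threshold condition on $|\phi^\om(v,T)-m(\om)|$, and then to prove that this threshold is reached in a uniformly bounded time by integrating the differential inequality for $|\phi-m(\om)|^2$.

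First I would carry out the reduction. Combining the two quadratic estimates in (\ref{E:quadratic estimates for functions}) with $|m(\om)|<c_0/\sqrt{10}$ yields $0\le f^\om(m(\om))-f^\om(0)\le c_0/20$. Consequently, any $u\in\overline B^k$ with $|u-m(\om)|^2>3/10$ satisfies $f^\om(u)<f^\om(m(\om))-3c_0/20\le f^\om(0)-c_0/10$ (by the upper quadratic bound), and any $u$ with $|u-m(\om)|>c_0(1/4+1/\sqrt{10})$ satisfies $|u|\ge|u-m(\om)|-|m(\om)|>c_0/4$ (by the triangle inequality). Setting $R:=\max\{\sqrt{3/10},\,c_0(1/4+1/\sqrt{10})\}$, it therefore suffices to find $T$, uniform in $(\om,v)$ with $|v-m(\om)|\ge\de$, such that $|\phi^\om(v,T)-m(\om)|>R$.

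Next I would establish monotonicity and exponential growth of $|\phi-m(\om)|^2$. Using $\nabla f^\om(m(\om))=0$ together with $D^2 f^\om\le -c_0\Id$, one has $\langle\phi-m(\om),\nabla f^\om(\phi)\rangle\le -c_0|\phi-m(\om)|^2$, whence
\[
\frac{d}{dt}|\phi^\om(v,t)-m(\om)|^2
=-2(1-|\phi|^2)\langle\phi-m(\om),\nabla f^\om(\phi)\rangle
\ge 2c_0(1-|\phi|^2)|\phi-m(\om)|^2.
\]
In particular, $|\phi-m(\om)|^2$ is nondecreasing along the flow and stays $\ge\de^2$. A short arithmetic check shows $R+c_0/\sqrt{10}<1$ for all $c_0\in(0,1)$, so while $|\phi-m(\om)|\le R$ one has $|\phi|\le R+c_0/\sqrt{10}$ and hence $1-|\phi|^2\ge \eta_0:=1-(R+c_0/\sqrt{10})^2>0$, a positive constant depending only on $c_0$. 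Plugging this into the inequality above gives $\frac{d}{dt}\log|\phi-m(\om)|^2\ge 2c_0\eta_0$ in this regime, so $|\phi^\om(v,t)-m(\om)|^2\ge\de^2 e^{2c_0\eta_0 t}$ as long as $|\phi-m(\om)|\le R$. Choosing $T$ slightly larger than $\log(R^2/\de^2)/(2c_0\eta_0)$ then forces $|\phi^\om(v,T)-m(\om)|>R$, which combined with the first paragraph yields both conclusions.

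The argument is elementary and self-contained, producing an explicit $T$ that depends only on $\de$ and $c_0$, and is therefore automatically uniform over $\bB$ and the family $\{f^\om\}$; no compactness or reparametrization argument is needed. The only technical item is the arithmetic verification that $R+c_0/\sqrt{10}<1$ uniformly in $c_0\in(0,1)$, a short case analysis on the two choices in the $\max$ defining $R$; this is the main — and only — obstacle.
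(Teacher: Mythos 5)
Your proposal is correct, and it takes a genuinely different route from the paper. The paper's argument is a compactness/contradiction argument: assuming the conclusion fails for $T=i$ with witnesses $(\om_i,v_i)$, it passes to a subsequential limit $(\om,v)$ using compactness of $\bB$ and $\overline{B}^k$ and continuity of $\om\mapsto f^\om$, and then derives a contradiction from the fact that the trajectory through $v$ must approach $\partial\overline{B}^k$, where (\ref{E:quadratic estimates for functions}) forces $f^\om$ to drop below $\sup f^\om-\frac{c_0}{5}$. Your argument instead integrates the differential inequality $\frac{d}{dt}|\phi-m(\om)|^2\geq 2c_0(1-|\phi|^2)|\phi-m(\om)|^2$ (valid since $\nabla f^\om(m(\om))=0$ at the interior maximum and $D^2f^\om\leq -c_0\Id$ along the segment joining $m(\om)$ to $\phi$, which lies in the convex set $\overline{B}^k$), and the arithmetic checks out: $R+c_0/\sqrt{10}\leq\max\{\sqrt{3/10}+1/\sqrt{10},\,1/4+2/\sqrt{10}\}<0.89$, so the degeneration of the factor $1-|\phi|^2$ near $\partial\overline{B}^k$ never enters the regime $|\phi-m(\om)|\leq R$ where the growth estimate is needed. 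What your version buys is an explicit $T=T(\de,c_0)$ that is independent of $\bB$ and of the particular family $\{f^\om\}$; in particular it would make the compactification of $\overline{\bB}^{\mF}_{2\ep}(\Om_0)$ carried out in the proof of Lemma \ref{L:index bound pre lemma2} unnecessary, since no compactness of the parameter space is used. The paper's version is shorter to write but qualitative. Both are complete proofs.
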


\vspace{1em}
Now we are ready to prove Lemma \ref{L:index bound pre lemma2}. Note that the ball $\overline{\bB}^\mF_{2\ep}(\Om_0)$ is not compact under the $\mF$-topology, so to apply Lemma \ref{L:a calculus lemma}, we need to introduce a compactification of $\overline{\bB}^\mF_{2\ep}(\Om_0)$.  
\begin{proof}[Proof of Lemma \ref{L:index bound pre lemma2}]
Given a $\mF$-Cauchy sequence $\{\Om_i\}\subset \overline{\bB}^\mF_{2\ep}(\Om_0)$, we denote $(V_\infty, \Om_\infty)\in \V_n(M)\times\C(M)$ as the limit such that $V_\infty=\lim_{i\to\infty}|\partial\Om_i|$ as varifolds and $\Om_\infty=\lim_{i\to\infty}\Om_i$ as Caccioppoli sets. If we define 
\[ \Ah_{\infty}(v)=\|(F_v)_\# V_\infty\|(M)-\int_{F_v(\Om_\infty)} h\, d\mH^{n+1},\, \text{ for each } v\in\overline{B}^k, \]
Then $\Ah_{\Om_i}$ converges smoothly to $\Ah_\infty$ as functions in $C^\infty(\overline{B}^k)$.

Now take $\bB$ as the union of $\overline{\bB}^\mF_{2\ep}(\Om_0)$ with the limits of the form $(V_\infty, \Om_\infty)$, $f^\Om=\Ah_\Om$ and $f^{(V_\infty, \Om_\infty)}=\Ah_\infty$, then Lemma \ref{L:index bound pre lemma2} follows from Lemma \ref{L:a calculus lemma}.
\end{proof}

\section{Existence of local PMC foliations}
\label{A:existence of local PMC foliations}

We recall the following classical result of White \cite[Appendix and Remark 2]{White87}. Note that the $\Ah$-functional can be locally expressed as the integration of an elliptic integrand.

\begin{proposition}
\label{P:existence of local PMC foliations}
Given a Riemannian metric $g$ in a neighborhood $U$ of $0\in\R^{n+1}$, 
there exists an $\ep>0$, such that if $h: U \to \R$ is a smooth function with $\|h\|_{4, \al}<\ep$, $r<\ep$, and if
\[ w: B^n_r \subset \R^n \to \R\,  \text{ satisfies }  \|w\|_{2, \al}<\ep r, \]
then for each $t\in [-r, r]$, there exists a $C^{2, \al}$-function $v_t: B^n_r\to\R$ whose graph $G_t$ satisfies:
\[  H_{G_t} = h\vert_{G_t}, \]
(where $H_{G_t}$ is evaluated with respect to the upward pointing normal of $G_t$), and 
\[ v_t(x)= w(x)+t,\, \text{ if } x\in\partial B^n_r. \]
Furthermore, $v_t$ depends on $r, t, h, w$ in $C^1$ and the graphs $\{G_t: t\in [-r, r]\}$ forms a foliation.
\end{proposition}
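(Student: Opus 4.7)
\medskip

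\noindent\textbf{Proof proposal.} The plan is to reduce the problem to a standard application of the implicit function theorem in Hölder spaces via a rescaling trick, and then obtain the foliation property from the maximum principle applied to the linearized PMC equation.

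\textbf{Step 1 (rescaling to unit size).} Since $r<\ep$ is small, I first rescale to a fixed domain: set $\tilde v(\tilde x)=v(r\tilde x)/r$ for $\tilde x\in B^n_1$, $\tilde g=r^{-2}(\mathbf m_r)^* g$ where $\mathbf m_r(x)=rx$, and $\tilde h(\tilde x,\tilde y)=r\,h(r\tilde x,r\tilde y)$. Under this rescaling, the PMC equation $H_{\mathrm{graph}(v)}=h|_{\mathrm{graph}(v)}$ with respect to $g$ is equivalent to $\tilde H_{\mathrm{graph}(\tilde v)}=\tilde h|_{\mathrm{graph}(\tilde v)}$ with respect to $\tilde g$, and the boundary data becomes $\tilde v|_{\partial B^n_1}(\tilde x)=\tilde w(\tilde x)+\tilde t$ where $\tilde w(\tilde x)=w(r\tilde x)/r$ and $\tilde t=t/r\in[-1,1]$. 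The hypotheses $\|h\|_{4,\al}<\ep$ and $\|w\|_{2,\al}<\ep r$ translate to $\|\tilde h\|_{4,\al}\lesssim \ep$ and $\|\tilde w\|_{2,\al}\lesssim\ep$, and $\tilde g$ is $O(\ep)$-close to the Euclidean metric on $B^n_1$ in $C^{4,\al}$.

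\textbf{Step 2 (implicit function theorem).} On the fixed domain $B^n_1$, define the nonlinear map
\[
F:C^{2,\al}(\overline{B^n_1})\times C^{2,\al}(B^n_1)\times C^{2,\al}(\partial B^n_1)\times[-1,1]\times \mathcal{U} \;\longrightarrow\; C^{0,\al}(\overline{B^n_1})\times C^{2,\al}(\partial B^n_1),
\]
\[
F(\tilde v,\tilde h,\tilde w,\tilde t,\tilde g)=\bigl(\tilde H_{\mathrm{graph}(\tilde v)}-\tilde h|_{\mathrm{graph}(\tilde v)},\; \tilde v|_{\partial B^n_1}-\tilde w-\tilde t\bigr),
\]
where $\mathcal{U}$ is a small $C^{4,\al}$-neighborhood of the Euclidean metric. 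At the base point $(\tilde v,\tilde h,\tilde w,\tilde t,\tilde g)=(0,0,0,0,g_{\mathrm{eucl}})$ we have $F=(0,0)$. The linearization $D_{\tilde v}F$ at this base point is $(\varphi\mapsto(-\lap\varphi,\varphi|_{\partial B^n_1}))$, which is a linear isomorphism between the relevant Hölder spaces by standard elliptic theory. Perturbing slightly, $D_{\tilde v}F$ remains an isomorphism in a small neighborhood, so the implicit function theorem produces a unique $C^1$ solution $\tilde v=\tilde v(\tilde h,\tilde w,\tilde t,\tilde g)$. Undoing the rescaling gives $v_t$ satisfying the claimed equation and boundary condition, with $C^1$ dependence on the parameters $r,t,h,w$.

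\textbf{Step 3 (foliation property).} For $\tilde t_1<\tilde t_2$, the two graphs solve the same quasilinear elliptic PMC equation with boundary data differing by $\tilde t_2-\tilde t_1>0$. Subtracting and applying the mean value theorem gives that $\tilde v_{\tilde t_2}-\tilde v_{\tilde t_1}$ satisfies a linear elliptic equation with bounded coefficients and positive Dirichlet data, so by the strong maximum principle $\tilde v_{\tilde t_1}<\tilde v_{\tilde t_2}$ on $\overline{B^n_1}\setminus\partial B^n_1$. Differentiating the equation in $\tilde t$ (using the $C^1$ dependence from Step 2), the function $\psi=\partial_{\tilde t}\tilde v_{\tilde t}$ solves a linear elliptic equation with Dirichlet boundary data identically $1$; the maximum principle again gives $\psi>0$ throughout $\overline{B^n_1}$. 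This shows the map $(\tilde x,\tilde t)\mapsto(\tilde x,\tilde v_{\tilde t}(\tilde x))$ is a $C^1$ diffeomorphism onto its image, which is precisely the foliation property.

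\textbf{Main obstacle.} The genuinely delicate point is calibrating the smallness parameters so that one single $\ep$ works for all $r<\ep$, all $h$ with $\|h\|_{4,\al}<\ep$, and all boundary data $w$ with $\|w\|_{2,\al}<\ep r$. The rescaling in Step 1 is designed exactly for this: it converts the shrinking ball $B^n_r$ into the fixed ball $B^n_1$ while absorbing the $r$-factor into the prescribing function (making $\tilde h$ small) and into the boundary data (making $\tilde w$ small). The isomorphism $D_{\tilde v}F$ at the base point is robust under small perturbations in $C^{4,\al}$, which is why I require the $C^{4,\al}$-norm of $h$ (rather than just $C^{2,\al}$) to be small, ensuring $\tilde h\in C^{3,\al}$ varies continuously in the relevant topology as the base point moves.
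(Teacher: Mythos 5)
Your overall strategy is the right one, and it is essentially the standard argument: the paper gives no proof of this proposition, deferring entirely to the appendix of \cite{White87} (after remarking that $\Ah$ is locally the integral of an elliptic integrand so that White's framework applies), and White's proof is precisely a rescaled implicit-function-theorem argument of the kind you sketch. There are, however, two places where your write-up as it stands does not deliver the stated conclusion.

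First, in Step 2 you invoke the implicit function theorem only at the single base point $\tilde t=0$. This produces solutions only for $\tilde t$ in a small neighborhood of $0$, whereas the proposition requires all $\tilde t=t/r\in[-1,1]$; the boundary datum $\tilde w+\tilde t$ is not small in $C^{2,\al}(\partial B^n_1)$ when $|\tilde t|$ is of order one, so it does not lie in the IFT neighborhood of the chosen base point. The fix is routine but must be said: for each $\tilde t_0\in[-1,1]$ the constant function $\tilde v\equiv\tilde t_0$ solves the Euclidean problem with $\tilde h=0$, $\tilde w=0$ (the corresponding plane still lies inside the rescaled chart since $|t|\leq r$), the linearization there is again the Laplacian, and compactness of $[-1,1]$ gives a uniform $\ep$ working for the whole family of base points. (You should also normalize $g(0)$ to the identity by a linear change of coordinates before asserting that $\tilde g$ is close to the Euclidean metric.)

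Second, in Step 3 the zeroth-order coefficient of the linear equation satisfied by $\tilde v_{\tilde t_2}-\tilde v_{\tilde t_1}$ (and by $\partial_{\tilde t}\tilde v_{\tilde t}$) is the normal derivative of $\tilde h$ at an intermediate graph, which has no sign in general, so the strong maximum principle does not apply verbatim to conclude positivity. What rescues the argument is quantitative: after rescaling this coefficient has size $O(r^{2}\ep)$, far below the first Dirichlet eigenvalue of the (nearly Euclidean) principal part on $B^n_1$, so the generalized maximum principle still forces $\tilde v_{\tilde t_2}-\tilde v_{\tilde t_1}>0$ and $\partial_{\tilde t}\tilde v_{\tilde t}>0$. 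This smallness is exactly where the hypotheses $\|h\|_{4,\al}<\ep$ and $r<\ep$ enter the foliation property, so it should be stated explicitly rather than absorbed into ``bounded coefficients.''
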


\bibliography{refs}
\bibliographystyle{plain}

\end{document}